\setlist[itemize]{leftmargin=*,  label=\rm{(\arabic*)}}
\setlist[enumerate]{leftmargin=*, label=\rm{(\arabic*)}}
\newcommand\nl{\par\noindent}
\numberwithin{equation}{subsection}
\newtheorem*{thm*}{Theorem}
\newtheorem{thm}{Theorem}[section]
\newtheorem{pro}[thm]{Proposition}
\newtheorem{lem}[thm]{Lemma}
\newtheorem{cor}[thm]{Corollary}
\newtheorem*{cor*}{Corollary}
\theoremstyle{definition}\newtheorem{defi}[thm]{Definition}
\theoremstyle{definition}\newtheorem{rem}[thm]{Remark}
\theoremstyle{definition}\newtheorem{ntn}[thm]{Notation}
\theoremstyle{definition}
\newcommand\ssty{\scriptstyle}
\newcommand\ov{\overline}
\newcommand\wh{\widehat}
\newcommand\wt{\widetilde}
\newcommand\mc{\mathcal}
\newcommand\mf{\mathfrak}
\renewcommand\({\left(}
\renewcommand\){\right)}
\newcommand\smeno{\smallsetminus}
\newcommand\0{\underline 0}
\newcommand\nat{\mathbb N} 
\newcommand\ganz{\mathbb Z} 
\newcommand\real{\mathbb R}
\newcommand\compl{\mathbb C}
\newcommand\pol{\mathcal P}
\newcommand\conv{\mathrm{Conv}}
\newcommand\cone{\mathrm{Cone}}
\newcommand\gen{\mathrm{Span}}
\newcommand\Min{\mathrm{Min}}
\newcommand\Max{\mathrm{Max}}
\newcommand\rk{\mathrm{rk}}
\newcommand\alt{\mathrm{ht}}
\newcommand\al{\alpha}
\newcommand\be{\beta}
\newcommand\ga{\gamma}
\newcommand\nosim{\ /\hbox{\hskip -7pt $\sim$}\,}
\newcommand\red{\mathrm{Red}}
\newcommand\eo{{\mathrm{e}}}
\newcommand\preq{\preccurlyeq}
\title{Triangulations of root polytopes}
\author{Paola Cellini}
\address{Dipartimento di Ingegneria e Geologia, Universit\`a di Chieti e Pescara, Viale Pindaro 42, 65127 Pescara, Italy}
\email{pcellini@unich.it}
\begin{document}

\maketitle

\begin{abstract}
Let $\Phi$ be an irreducible crystallographic root system and $\pol$ its root polytope, i.e., its convex hull. 
We provide a uniform construction, for all root types, of a triangulation of the facets of $\pol$. We also prove that, on each orbit of facets under the action of the Weyl gruop, the triangulation is unimodular  with respect to a root sublattice that depends on the orbit.
\end{abstract}

\section{Introduction}\label{intro}

\par
Let $\Phi$ be an irreducible crystallographic root system in a Euclidean space $\mathrm E$, $\Phi^+$ a positive system of $\Phi$, and $W$ the Weyl group of $\Phi$. 
We denote by $\pol$ the {\it root polytope} associated with $\Phi$, i.e. the convex hull of all  roots in $\Phi$. 

\par 
In \cite{CM1} we study a natural set of representatives of the faces of $\pol$ modulo the action of $W$, that we call the standard parabolic faces of $\pol$.  
The set of all roots contained in a standard parabolic face is an abelian ideal of  $\Phi^+$ (see Subsection ~\ref{subsec-adnilpotent} for a definition). 
We call {\it face ideals} or {\it facet ideals} the  abelian ideals of $\Phi^+$ corresponding to the standard parabolic faces or facets of $\pol$.

\par
In \cite{CM2}, for $\Phi$ of type $\mathrm A_n$ and $\mathrm C_n$, we have constructed a triangulation of the standard parabolic facets whose simplexes have a natural interpretation in terms of the corresponding facet ideals. 
The construction is formally equal for both root types, though the proofs are distinct and based on the special combinatorics of these two root systems and their maximal abelian ideals.  
Clearly, through the action of $W,$ a triangulation of all the standard parabolic facets can be extended to a triangulation of the boundary of $\pol$. 
Such an extension is not unique and corresponds to a  choice of representatives of the left cosets of $W$ modulo  the stabilizers of the standard parabolic facets.
The triangulations of the boundary of $\pol$ are also studied in  \cite{Mesz1, Mesz2},  for $\mathrm C_n$, and in \cite{Ard} for all classical root types, using the coordinate description of $\Phi$.
In \cite{GGP}, the triangulations of the positive root polytope $\pol^+$, i.e the convex hull of the positive roots and the origin,  are studied for $\Phi$ of type $\mathrm A_n$.

\par
In this paper, we give a uniform construction of a triangulation of the standard parabolic facets,  for all finite irreducible crystallographic root system. 
The construction coincides with the one of \cite{CM2} for the types $\mathrm A_n$ and $\mathrm C_n$.  
We also obtain unimodularity results similar to those obtained for $\mathrm A_n$ and $\mathrm C_n$.  

\par
We need some preliminaries for describing the results in more detail.
If $\be_1,\be_2, \ga_1, \ga_2\in \Phi^+$ are such that $\be_1+\be_2=\ga_1+\ga_2$, we say that $\{\be_1, \be_2\}$ and $\{\ga_1, \ga_2\}$ are {\it crossing pairs}. 
We first prove that if $\{\be_1,\be_2\}, \{\ga_1, \ga_2\}$ are crossing pairs contained in a (common) abelian ideal,  then, for all $i, j$ in $\{1, 2\}$, the differences $\be_i-\ga_j$ are roots, in particular  $\be_i$ and $\ga_j$ are comparable. 
This implies that the set $\{\be_1,\be_2, \ga_1, \ga_2\}$ has a minimum and a maximum, more precisely, one of the two crossing pairs consists of these minimum and maximum, i.e., either $\be_1< \ga_i< \be_2$ for both $i=1$ and $2$, or the analogous relation with $\be$ and $\ga$ interchanged holds.
We define the relations $\lesssim$ and $\sim$ on $\Phi^+$ as follows.
For all $\be_1, \be_2$ in $\Phi^+$, we write $\be_1\lesssim \be_2$ if there exist 
$\ga_1, \ga_2$ such that $\be_1+\be_2=\ga_1+\ga_2$ and $\be_1< \ga_i< \be_2$ for both $i=1$ and $2$. 
Moreover, we write $\be_1\sim \be_2$ if $\be_1\lesssim \be_2$ or $\be_2\lesssim \be_1$.
Finally, we say that a subset $R$ of $\Phi^+$ is {\it reduced} if $\be_1\nosim \be_2$ for all $\be_1, \be_2\in R$.  

\par
The first of main results in this paper is that the maximal reduced subsets in a facet ideal provide a triangulation of the corresponding standard parabolic facet.
For each standard parabolic facet $F$ of $\pol$, let $I_F$ be the corresponding facet ideal:
$$
I_F=F\cap \Phi,
$$
and
$$
\mc T_F=\{\conv(R)\mid R\subseteq I_F,\ R \text { maximal reduced }\},
$$
where $\conv(R)$ is convex hull of $R$. 
Then the following result holds.

\begin{thm}\label{main1}
For each standard parabolic facet  $F$ of $\pol$,  $\mc T_F$ is a triangulation of $F$.
\end{thm}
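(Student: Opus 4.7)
The plan is to verify the three defining conditions of a triangulation of $F$: every $\conv(R)\in\mc T_F$ is a simplex of dimension $\dim F$, the union of these simplices equals $F$, and any two of them meet in a common face.

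For the simplex property, the crux is affine independence of any reduced $R\subseteq I_F$. A hypothetical nontrivial affine dependence in $R$ yields, after separating positive and negative coefficients, an identity $\sum_i a_i\be_i=\sum_j b_j\ga_j$ with $a_i,b_j>0$, $\sum a_i=\sum b_j$, and disjoint supports $\{\be_i\},\{\ga_j\}\subseteq R$. Taking a minimal such identity and exploiting that $I_F$ is an abelian ideal (so no two of its elements sum to a root), one reduces to the two-term case $\be_1+\be_2=\ga_1+\ga_2$. This exhibits $\{\be_1,\be_2\}$ and $\{\ga_1,\ga_2\}$ as crossing pairs inside an abelian ideal, so by the preliminary crossing-pair lemma $\be_1\sim\be_2$, contradicting reducedness of $R$. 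The equality $|R|=\dim F+1$ for $R$ maximal follows because $I_F$ affinely spans $F$: a reduced $R$ that does not span admits any $\be\in I_F$ outside $\gen R$ as an enlargement, since $\be\sim\be'$ for some $\be'\in R$ would produce an affine dependence of the form just ruled out.

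For the covering property I would argue by flipping. Given $p\in F$, express $p=\sum c_i\be_i$ as a convex combination of roots in $I_F$. If the support is reduced, it extends to a maximal reduced $R$ and $p\in\conv(R)$. Otherwise choose $\be_1,\be_2$ in the support with $\be_1\lesssim\be_2$ and corresponding $\ga_1,\ga_2$ with $\be_1+\be_2=\ga_1+\ga_2$ and $\be_1<\ga_i<\be_2$; both $\ga_i\in I_F$ because $I_F$ is an order ideal dominating $\be_2$. Setting $\ep=\min(c_1,c_2)$ and substituting $\ep(\be_1+\be_2)=\ep(\ga_1+\ga_2)$ produces a new convex expression in which at least one of $\be_1,\be_2$ has dropped from the support, and in which a well-chosen monovariant (for instance, the lexicographically ordered multiset of heights of support roots) strictly decreases. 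Iteration terminates at a reduced support.

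The main obstacle is the intersection property: for distinct maximal reduced $R,R'$, one must prove that $\conv(R)\cap\conv(R')$ is a face of each. The strategy is to show that two convex expressions $p=\sum a_i\be_i=\sum b_j\ga_j$ with $\be_i\in R$ and $\ga_j\in R'$ collapse to a common expression supported on $R\cap R'$. After cancelling common terms and separating by sign, one obtains an identity with positive side supported in $R$ and negative side in $R'$, to which the minimal-relation argument of the second paragraph applies: a minimal failure yields a two-term crossing-pair relation $\be+\be'=\ga+\ga'$ with $\be,\be'\in R$ and $\ga,\ga'\in R'$, forcing $\be\sim\be'$ and contradicting reducedness of $R$. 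The delicate step is securing the reduction to the two-term case when the two sides come from two distinct ambient simplices; I would handle it by induction on $|R\triangle R'|$, showing that adjacent maximal reduced sets differ by a single crossing-pair flip whose shared face is the manifest candidate for the intersection. A safety net, if that combinatorial analysis proves unwieldy, is a volume computation verifying $\sum_R\vol\conv(R)=\vol F$, which together with the covering property rules out overlap of interiors.
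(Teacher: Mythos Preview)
Your proposal has a central gap that recurs in both the independence and the intersection arguments: the claimed ``reduction to the two-term case.'' You assert that a minimal affine relation $\sum_i a_i\be_i=\sum_j b_j\ga_j$ among roots of a reduced set (or with the $\be_i$ in $R$ and the $\ga_j$ in $R'$) must collapse to a crossing relation $\be_1+\be_2=\ga_1+\ga_2$, but you give no mechanism for this collapse. The abelian-ideal property only tells you that $\be_i+\be_{i'}\notin\Phi$; it does not force minimal linear relations in $I_F$ to be quadratic binomials, and even if it did, for the intersection argument you would further need the two sides of the binomial to land in $R$ and $R'$ respectively, which minimality alone does not arrange. This step is not a detail: it is precisely the content of the theorem. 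The paper does \emph{not} establish any such reduction; instead it builds, case by case, a \emph{triangulation order} $\preq$ on $I$ together with \emph{detaching hyperplanes} (Section~\ref{section-triang-order}, Proposition~\ref{triang-ord-exist}), and then proves linear independence of reduced sets (Proposition~\ref{reduced-independent}) and the face-intersection property (Proposition~\ref{intersection}) by induction on $\rk(\Phi)$, slicing $(\be^\preq)$ by the detaching hyperplane $H$ and invoking the inductive hypothesis in the lower-rank system $\Phi\cap H$.

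A few secondary issues compound the main one. Your argument that a maximal reduced $R$ has full rank (``any $\be\notin\gen R$ can be adjoined'') does not work: if $\be\sim\be'$ for some $\be'\in R$, the witnessing middle pair $\{\ga_1,\ga_2\}$ lies in $I_F$, not in $R$, so the relation $\be=\ga_1+\ga_2-\be'$ does not place $\be$ in $\gen R$ and no contradiction arises. In the paper this is deduced \emph{a posteriori} from the covering and intersection results (Corollary after Theorem~\ref{main1-6}). Your flipping argument for covering is close in spirit to the paper's Lemma~\ref{coni-bipartite}, but termination via ``the lexicographically ordered multiset of heights'' is not justified, since the support can grow and the real coefficients need not be bounded away from zero; the paper avoids this by organizing the flips through the triangulation order, which gives a well-founded descent. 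Finally, the proposed volume ``safety net'' presupposes equal-volume simplices, which is Theorem~\ref{main2} and is proved only after Theorem~\ref{main1}.
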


\nl
Clearly, the set of vertexes of the above triangulation is the set of all roots contained in~$F$. 
\smallskip

\par
Thorem \ref{main1} says, in particular, that the maximal reduced subsets in $I_F$ are linear bases of $\mathrm E$. 
Let $\Pi$ and $\theta$ be the simple system and the highest root of $\Phi^+$. 
Then, $\{-\theta\}\cup \Pi$ is the set of vertexes of the affine Dynkin diagram of $\Phi$.
For each $\al\in \Pi$,  let $\Phi_{\al}$ and $\wt\Phi_{\al}$ be the root subsystems of $\Phi$ generated by $\Pi\smeno\{\al\}$ and  $\{-\theta\}\cup (\Pi\smeno\{\al\})$, respectively, and $\Phi_\al^+$ and $\wt\Phi_{\al}^+$ their positive systems contained in $\Phi^+$.
Clearly, $\wt\Phi_{\al}$ has the same rank as $\Phi$.
We call the $\wt\Phi_\al$, for all $\al\in \Pi$, the {\it standard equal rank} subsystems of $\Phi$.
It is known that the standard parabolic facets of $\pol$ correspond to the {\it irreducible} standard equal rank root  subsystems of $\Phi$. 
In fact, for each $\al\in \Pi$ such that  that $\wt\Phi_{\al}$ is irreducible,  let
$$
I_\al=\wt\Phi_{\al}^+\smeno \Phi_\al.
$$ 
Then $I_\al$ is a facet ideal of $\Phi^+$, and each facet ideal of $\Phi^+$ is obtained in this way (see \cite{CM1}).
We prove the following result. 

\begin{thm}\label{main2}
Let $\al\in \Pi$ be such that $\wt\Phi_{\al}$ is irreducible.
Then, each  maximal reduced subset contained in the facet ideal $I_\al$ is a $\ganz$-basis of the root lattice of $\wt\Phi_{\al}$.  
In particular, all the simplexes of the triangulation  $\mc T_F$  have the same volume.
\end{thm}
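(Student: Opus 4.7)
The plan is to show, in two steps: (a) all the lattices $\Lambda_R := \gen_\ganz(R)$, as $R$ ranges over the maximal reduced subsets of $I_\al$, coincide with a common sublattice of $\rlatt(\wt\Phi_\al)$; (b) there is an explicit choice $R_0$ for which $\Lambda_{R_0} = \rlatt(\wt\Phi_\al)$. By Theorem \ref{main1}, any such $R$ has $n := \rk\wt\Phi_\al$ linearly independent elements, so $\Lambda_R$ is automatically a full-rank sublattice of $\rlatt(\wt\Phi_\al)$; only the equality of indices is at stake.

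For (a) I would use a flip argument exploiting the abelian-ideal structure. The adjacency graph of the top-dimensional simplexes of $\mc T_F$ is connected, so it suffices to treat two adjacent simplexes: $R = S \cup \{\be\}$ and $R' = S \cup \{\ga\}$ with $|S| = n-1$ and $\be \neq \ga$. The $n+1$ elements of $S \cup \{\be, \ga\}$ lie in the $(n-1)$-dimensional affine hyperplane $H_F := \mathrm{aff}(F)$ and thus admit a unique primitive integer affine dependence
$$
a_\be \, \be + a_\ga \, \ga + \sum_{s \in S} a_s \, s \;=\; 0, \qquad a_\be + a_\ga + \sum_{s \in S} a_s \;=\; 0.
$$
Since $\be$ and $\ga$ lie on opposite sides of $\mathrm{aff}(S) \cap H_F$, the coefficients $a_\be, a_\ga$ share the same sign, so WLOG both are positive. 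A short computation then shows that $\Lambda_R = \Lambda_{R'}$ is equivalent to $a_\be = a_\ga = 1$. I expect the proof of this to proceed by rewriting the dependence as an equality of two sums of roots in the abelian ideal $I_\al$,
$$
a_\be \, \be + \sum_{a_s < 0} |a_s|\, s \;=\; -a_\ga \, \ga + \sum_{a_s > 0} a_s \, s,
$$
and repeatedly applying the crossing-pair lemma of the introduction to reduce such an identity to a single crossing-pair exchange $\be + s_2 = \ga + s_1$, with $s_1, s_2 \in S$, whose coefficients are manifestly $\pm 1$. Bounding the complexity of integer affine dependences among roots of an abelian ideal to crossing-pair form is the main technical obstacle of the proof; the reduced conditions on $R$ and $R'$ (no $\sim$-related pair inside either) should play the central role in carrying this out, as any more complex dependence would, via the crossing-pair lemma, produce such a $\sim$-related pair.

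For (b) I would take $R_0$ to be a suitable modification of the simple system $\wt\Pi_\al = \{-\theta\} \cup (\Pi \smeno \{\al\})$ of $\wt\Phi_\al$, which is a $\ganz$-basis of $\rlatt(\wt\Phi_\al)$ meeting $I_\al$ only in $\{-\theta\}$, since every other simple root of $\wt\Phi_\al$ lies in $\Phi_\al$. Each remaining $\eta \in \Pi \smeno \{\al\}$ is replaced by a root of $I_\al$ of the form $\eta$ plus a $\ganz_{\ge 0}$-combination of $\wt\Pi_\al \smeno \{\eta\}$; each such replacement is an elementary, $\ganz$-invertible column operation on the basis and therefore preserves the $\ganz$-span. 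A concrete recipe, guided by the path structure of the affine Dynkin diagram of $\Phi$, produces such an $R_0 \subseteq I_\al$ with $\Lambda_{R_0} = \rlatt(\wt\Phi_\al)$, and maximality of the reduced subset $R_0$ is then verified using the description of $I_\al$ already recalled. Finally, the equal-volume statement is immediate: every $\conv(R)$ lies in $H_F$ at the fixed distance $d_F$ from the origin, so
$$
\vol \conv(R) \;=\; \frac{|\det M_R|}{(n-1)! \, d_F} \;=\; \frac{\mathrm{covol}(\rlatt(\wt\Phi_\al))}{(n-1)! \, d_F}
$$
is independent of $R$, where $M_R$ is the matrix whose columns are the elements of $R$ in an orthonormal basis of $\mathrm E$.
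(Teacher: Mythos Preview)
Your approach differs from the paper's, which argues by induction on the rank via the triangulation-order machinery of Section~\ref{section-triang-order}: for a maximal reduced $R$ and $\be = \min_\preq R$, one passes to a detaching hyperplane $H$ so that $R \smeno \{\be\}$ is maximal reduced in the abelian nilradical $I \cap H$ of the lower-rank system $\Phi \cap H$; by induction $R \smeno \{\be\}$ is a $\ganz$-basis of $L(\Phi \cap H)$, and a short check (referring back to the case analysis of Proposition~\ref{triang-ord-exist}) shows that adjoining $\be$ recovers $L(\Phi)$. Each $R$ is handled on its own; no flip between maximal reduced subsets is used.

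Your step (a) has a genuine gap. For adjacent $R = S \cup \{\be\}$ and $R' = S \cup \{\ga\}$, maximality does force $\be \sim \ga$ (since $\ga \not\sim s$ for all $s \in S$, else $R'$ would not be reduced), so a crossing relation $\be + \ga = \de_1 + \de_2$ with $\de_i \in I_\al$ exists. But to obtain $a_\be = a_\ga = 1$ you need some middle pair between $\be$ and $\ga$ to lie in $S$, and nothing you wrote secures that: if $\de_1 \notin S \cup \{\be, \ga\}$, maximality of $R$ only gives $\de_1 \sim r$ for some $r \in R$, which produces a further crossing relation with no evident mechanism forcing termination in one supported on $S \cup \{\be, \ga\}$. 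Corollary~\ref{catene-crossing} constrains how middle pairs are ordered but does not force any of them into $S$. This is exactly the ``main technical obstacle'' you flag, and it remains open. Step (b) also needs repair: $\wt\Pi_\al = \{-\theta\} \cup (\Pi \smeno \{\al\})$ meets $I_\al$ nowhere (the root $-\theta$ is negative), so all $n$ elements must be replaced; and after replacement you must still verify that $R_0$ is \emph{reduced}, which you do not address and which is not obviously easier than the paper's case-by-case argument.
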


\par
Part of the proofs require a case by case analysis. 
The cases to be considered can be restricted to a special, proper subset of facet ideals. 
Indeed, the results of \cite{CM1} imply that the facet ideal  $I_\al$  ($\al\in \Pi$,  $\wt\Phi_{\al}$ irreducible), is an {\it abelian nilradical} (see Subsection \ref{subsec-abnil}) in the root subsystem $\wt\Phi_\al^+$,  hence we may reduce to the case of abelian nilradicals.

\par
The case by case analysis is contained in the proof of Proposition \ref{triang-ord-exist}. 
This proof also provides an algorithm for the explicit computation of the triangulations for each root type, which will be done in a next paper. 

\section{Preliminaries}\label{preli}

In this section we fix our main notation and recall some preliminary results. 
For the basic preliminary notions, we refer to \cite{Bou} and \cite{HuCox} for root systems, and to \cite{Bou2} and \cite{HuLie} for Lie algebras. 

\subsection{Basic notation.}\label{basic}
{\sl General.} We sometimes use the symbol $:=$ for emphasizing  that equality holds by definition or that we are defining the left term of equality.
We denote by $(\ \,,\ )$ the scalar product of $\mathrm E$ and by $|\ \ |$ the corresponding norm. We identify $\mathrm E$ with its dual space, through $(\ \,,\ )$. The null vector of $\mathrm E$ is denoted by $\0$. 
For any $S\subseteq \mathrm E$, $\gen(S)$ is the vector subspace generated by $S$ over $\real$ (the field of real numbers), and $\rk(S):=\dim\gen(S)$.
\par
{\sl Root systems.}
The simple system of $\Phi$ corresponding to the positive system $\Phi^+$ is denoted by $\Pi$,  while $\Omega^\vee$ is the set of fundamental co-weights of $\Phi$, i.e., the  dual basis of $\Pi$ in $\mathrm E$.  
For each $\al\in \Pi$, $\check\omega_\al$ is the fundamental co-weight  defined by the conditions $(\al, \check\omega_\al)=1$ and $(\al', \check \omega_\al)=0$ for all $\al'\in \Pi\smeno \{\al\}$. 
For all $\al\in \Pi$ and $\be \in \Phi$, $c_\al(\be)$ is the coefficient of $\al$ in $\be$, i.e., 
$$c_\al(\be)=(\be, \check\omega_\al).$$
The highest root in $\Phi^+$ is denoted by $\theta$ and its coefficients with respect  to $\Pi$ by $m_\al$, thus
$$\theta=\sum\limits_{\al\in \Pi} m_\al\al.$$ 
We will call $m_\al$ the {\it multiplicity of $\al$ in $\Phi^+$}.
\par

For all $\be\in \Phi$, $\be^\vee$ is the corresponding coroot, i.e., $\be^\vee=\frac{2\be}{(\be, \be)}$.
\par

For each root subsystem $\Psi$ of $\Phi$ we set $\Psi^+=\Psi\cap \Phi^+$. 
It is well known that $\Psi^+$ is a positive system for $\Psi$: we call it the {\it standard positive system} of $ \Psi$.
Moreover, we denote by $L(\Psi)$ and $L^+(\Psi)$ the root lattice and positive root lattice of $\Psi$, i.e. the $\ganz$-span of $\Psi$ and the $\nat$-span of $\Psi^+$, respectively, where $\ganz$ and $\nat$ are the sets of integers and non-negative integers. 

\par
For any $S\subseteq \Phi$, we denote by $\Phi(S)$ the root subsystem of $\Phi$ generated by $S$, i.e., the minimal root system containing $S$,
and we write  $\Phi^+(S)$ for $\Phi(S)^+$. 

\par
{\sl Posets.} As usual, $\leq$ denotes  both the order of $\real$ and the partial order of $\mathrm E$ associated to $\Phi^+$:  for all $x,y\in \mathrm E$,  $x\leq y$ if and only if $y-x\in L^+(\Phi)$.
We call this last order {\it the  standard partial order}.
We will need only the restriction of the standard partial order to $\Phi^+$. 
For any $S\subseteq \Phi^+$, we denote by $\Min\,S$ and $\Max\, S$, with capital M, the sets of minimal and maximal elements of $S$, and by $\min S$ and $\max S$ its possible minimum and maximum, with respect to $\leq$. 
The analogous objects with respect to any other order relation $\preq$, will be distinguished by the subscript~$_\preq$.
The elements in  $\Min\,S\cup \Max\, S$ are called {\it the extremal} elements of $P$.
We say that $S$  is {\it saturated} if it is saturated with respect to the standard partial order, i.e., for all $\be_1, \be_2\in S$ such that $\be_1\leq \be_2$, all the interval $[\be_1, \be_2]:=\{\ga\in \Phi \mid \be_1\leq \ga\leq \be_2\}$ is contained in $S$.
Any subset $S'$ of $S$ is called {\it an initial section of $S$} if for all $\be\in S'$ and $\ga\in S$, if $\ga\preq \be$, then $\ga \in S'$. 
The final sections are defined similarly.
\par
For any order relation $\preq$ on $\Phi^+$ and for all $\be\in \Phi^+$, we denote $(\be^\preq)$ the {\it $\preq$-upper cone of} $\be$, i.e., 
$$(\be^\preq)=\{\ga\in \Phi^+\mid \be\preq \ga\}.$$  
Clearly, this is a dual order ideal, or filter,  in the poset $(\Phi^+,\preq)$.

\subsection{Basic lemmas on roots}\label{subsec-basic}

We say that two roots are summable if their sum is a root. 
It is a basic fact that two roots with negative scalar product are summable and that the converse does not hold, in general. 
\par
Let $\mf g$ be  a complex simple Lie algebra with root system $\Phi$ with respect to the Cartan subalgebra $\mf h$ (see e.g. \cite[\S 18]{HuLie}). 
Thus, $\mf g=\(\bigoplus_{\al\in\Phi}\mf g_\al\)\oplus \mf h$, where $\mf g_\al$ is the root space of $\al$,  for all $\al\in \Phi$, and  $\(\gen_\compl(\Phi)\)=\mf h^*$, the dual space of $\mf h$. 
It is well known that if $\al$ and $\be$ are summable roots, then $[\mf g_\al,\mf g_\be]=\mf g_{\al+\be}$, while if  $\al$ and $\be$ are not summable and $\al\neq -\be$, then  $[\mf g_\al,\mf g_\be]=0$. 

\begin{pro}\label{furbo}
Let $\Phi$ be any crystallographic root system and   let $\be_1, \be_2, \be_3\in  \Phi$ be such that $\be_1+\be_2+\be_3\in \Phi$  and $\be_i\neq -\be_j$ for all $i, j\in\{1, 2, 3\}$.
Then at least two of the three sums $\be_i+\be_j$, with $i, j\in\{1, 2, 3\}$ and $i\neq j$, belong to $\Phi$.
\end{pro}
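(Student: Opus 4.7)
The plan is to argue by contradiction, assuming that at most one of the three sums $\be_i+\be_j$ lies in $\Phi$, and splitting into two cases depending on whether none or exactly one of the sums is a root.

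First I would treat the case in which no $\be_i+\be_j$ is a root. Since the $\be_i$ are pairwise non-opposite by hypothesis, the standard fact that two roots with strictly negative scalar product are summable forces $(\be_i,\be_j)\geq 0$ for every pair. Expanding
$$|\be_1+\be_2+\be_3|^2=\sum_{i=1}^3|\be_i|^2+2\sum_{i<j}(\be_i,\be_j)$$
then gives $|\ga|^2\geq 3r^2$, where $\ga=\be_1+\be_2+\be_3$ and $r$ is the minimal root length. In an irreducible crystallographic root system the ratio $R^2/r^2$ between the squared lengths of long and short roots belongs to $\{1,2,3\}$, so $3r^2>R^2\geq|\ga|^2$ for every type except $\mathrm G_2$. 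For $\mathrm G_2$ the inequality becomes $3r^2=R^2$, which would force all cross terms $(\be_i,\be_j)$ to vanish; but three pairwise orthogonal nonzero vectors cannot fit in the rank $2$ ambient space of $\mathrm G_2$, giving the contradiction.

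Next I would handle the case in which exactly one sum, say $\be_1+\be_2$, is a root, while $\be_1+\be_3,\be_2+\be_3\notin\Phi$. Here I invoke the simple Lie algebra $\mf g$ attached to $\Phi$, recalled in the paragraph preceding the proposition. Choosing nonzero $e_i\in\mf g_{\be_i}$, the recalled facts give $[e_1,e_3]=0$ and $[e_2,e_3]=0$, so the Jacobi identity
$$[e_1,[e_2,e_3]]+[e_2,[e_3,e_1]]+[e_3,[e_1,e_2]]=0$$
collapses to $[e_3,[e_1,e_2]]=0$. However, $[e_1,e_2]$ is a nonzero element of the one-dimensional root space $\mf g_{\be_1+\be_2}$, and since $\ga=\be_3+(\be_1+\be_2)\in\Phi$ with $\be_3\neq-(\be_1+\be_2)$ (otherwise $\ga=\0$), the double bracket lies in $\mf g_\ga\neq 0$ and is itself nonzero, contradicting its vanishing.

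The main obstacle I anticipate is the borderline $\mathrm G_2$ subcase of the length argument, where $|\ga|^2\geq 3r^2$ does not strictly exceed $R^2$; this is resolved cleanly by the rank $2$ constraint, which forbids three pairwise orthogonal roots. The Jacobi argument in the second case is essentially routine once the correct case split is identified and the two vanishing brackets isolated.
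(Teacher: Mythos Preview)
Your proof is correct. The Jacobi-identity argument in your Case~2 is exactly the paper's second step. The difference lies in how you rule out the possibility that \emph{no} pairwise sum is a root. The paper does this in one line: since $(\ga,\ga)>0$ with $\ga=\be_1+\be_2+\be_3$, some $(\ga,\be_i)>0$, and then $\ga-\be_i=\be_j+\be_k$ is a root (note $\ga\neq\be_i$, else $\be_j=-\be_k$). Your length estimate $|\ga|^2\geq 3r^2$ together with the classification of squared-length ratios $R^2/r^2\in\{1,2,3\}$ reaches the same conclusion but is more elaborate and requires the separate $\mathrm G_2$ treatment. The paper's scalar-product trick is shorter and uniform across types.

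One small point: your length-ratio argument tacitly assumes $\Phi$ is irreducible, whereas the statement allows any crystallographic $\Phi$. This is harmless---the hypotheses $\be_1+\be_2+\be_3\in\Phi$ and $\be_i\neq-\be_j$ force all three $\be_i$ into a single irreducible component---but it deserves a sentence.
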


\begin{proof}
Since $(\be_1+\be_2+\be_3, \be_1+\be_2+\be_3)>0$, at least one of the scalar products $(\be_1+\be_2+\be_3, \be_i)$ with $1\leq i\leq 3$ is strictly positive, whence  $\be_1+\be_2+\be_3- \be_i$ is a root. 
Assume for example $\be_1+\be_2\in \Phi$. 
Then, $[[\mf g_{\be_1}, \mf g_{\be_2}],\mf g_{\be_3}]\neq 0$, hence, by the Jacobi identity, at least one of $[[\mf g_{\be_1}, \mf g_{\be_3}],\mf g_{\be_2}]$ 
and $[\mf g_{\be_1}, [\mf g_{\be_2},\mf g_{\be_3}]]$ is not $0$. It follows that at least one of $\be_1+\be_3$ and $\be_2+\be_3$ is a root.
\end{proof}
\smallskip

In the following Lemma we classify the Cartan integers of pairs of summable roots. 
The proof is an exercise and is omitted. 
The results are well known and will be used also without explicit reference to the lemma. 
\begin{lem}\label{esercizio}
Assume $\be, \ga, \be+\ga\in \Phi$. 
\begin{enumerate}
\item
If $|\be|=| \ga|=|\be+\ga|$, then $(\be, \ga^\vee)=-1$.
\item
If $|\be|=| \ga|\neq |\be+\ga|$, then either 
$\frac{|\be+\ga|^2}{ |\be|^2}= 2$ and  $(\be, \ga^\vee)=0$, or $\frac{|\be+\ga|^2}{ |\be|^2}=3$ and  $(\be, \ga^\vee)=1$. In any case, $|\be|=| \ga|< |\be+\ga|$.
\item
If $|\be|<| \ga|$, then $|\be+\ga|=|\be|$, $(\ga, \be^\vee)=-\frac{|\ga|^2}{ |\be|^2}\in \{-2, -3\}$, and  $(\be, \ga^\vee)=-1$.  
\end{enumerate}
In particular, $(\be, \ga)\geq 0$ if and only if $|\be|=| \ga|< |\be+\ga|$.
\end{lem}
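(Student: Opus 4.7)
The plan is to reduce everything to the identity
\[
|\be+\ga|^2 = |\be|^2 + 2(\be,\ga) + |\ga|^2
\]
combined with the Cartan integer formula $(\be,\ga^\vee)=2(\be,\ga)/|\ga|^2$, subject to the crystallographic constraint that $(\be,\ga^\vee)\in\ganz$ and that in an irreducible crystallographic root system the ratio of squared root lengths lies in $\{1,2,3\}$ (or its reciprocals). Since $\be+\ga\in\Phi$ forces $\be$ and $\ga$ to lie in the same irreducible component, we may freely use these length restrictions.

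First I would do case (1): assuming $|\be|=|\ga|=|\be+\ga|$, the identity gives $(\be,\ga)=-|\be|^2/2$, hence $(\be,\ga^\vee)=-1$ immediately. For case (2), with $|\be|=|\ga|$ I would rewrite the identity as
\[
(\be,\ga^\vee) = \frac{|\be+\ga|^2}{|\be|^2}-2,
\]
and observe that the ratio $|\be+\ga|^2/|\be|^2$ must belong to $\{1/3, 1/2, 1, 2, 3\}$; integrality of $(\be,\ga^\vee)$ rules out the two fractional values, and the hypothesis $|\be+\ga|\neq|\be|$ leaves exactly $\{2,3\}$, yielding the two subcases with $(\be,\ga^\vee)\in\{0,1\}$ and automatically $|\be+\ga|>|\be|$.

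For case (3), with $|\be|<|\ga|$, the length of $\be+\ga$ must again equal the short or the long root length. If $|\be+\ga|=|\ga|$, the identity gives $(\be,\ga)=-|\be|^2/2$, hence $(\be,\ga^\vee)=-|\be|^2/|\ga|^2\in\{-1/2,-1/3\}$, which is not an integer: contradiction. Therefore $|\be+\ga|=|\be|$, and then $(\be,\ga)=-|\ga|^2/2$ gives $(\ga,\be^\vee)=-|\ga|^2/|\be|^2\in\{-2,-3\}$ and $(\be,\ga^\vee)=-1$.

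The final ``in particular'' statement then follows by inspecting the sign of $(\be,\ga)$ extracted in each case: it is $-|\be|^2/2<0$ in case (1), is $-|\ga|^2/2<0$ in case (3), and equals $(|\be+\ga|^2-2|\be|^2)/2\geq 0$ precisely in case (2), where $|\be+\ga|^2/|\be|^2\in\{2,3\}$. There is no genuine obstacle; the only point requiring a little care is to exclude the ratios $1/2$ and $1/3$ in cases (2) and (3) by integrality of the Cartan integer, which is what selects the stated dichotomy.
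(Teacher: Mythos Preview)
Your argument is correct and is exactly the kind of direct computation the paper has in mind: the paper explicitly says ``The proof is an exercise and is omitted,'' so there is nothing to compare against. Your reduction to the identity $|\be+\ga|^2=|\be|^2+2(\be,\ga)+|\ga|^2$ together with integrality of the Cartan integers and the constraint $|\ga|^2/|\be|^2\in\{1,2,3\}$ (for roots in the same irreducible component) is the standard route and handles all three cases cleanly, including the exclusion of the fractional ratios in cases (2) and (3).
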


\subsection{Ad-nilpotent and abelian ideals.}\label{subsec-adnilpotent}

\par
Let $\mf g$ be as in Subsection ~\ref{subsec-basic}, 
$\mf b$ be the standard Borel subalgebra of $\mf g$ associated to $\Phi^+$,  and $\mf n$ its nilpotent radical, i.e., $\mf b=\(\bigoplus_{\al\in\Phi^+}\mf g_\al\)\oplus \mf h$ and $\mf n=\bigoplus_{\al\in\Phi^+}\mf g_\al$. 

\par
An {\it ad-nilpotent} ideal of $\mf b$ is a (nilpotent) ideal of $\mf b$ contained in $\mf n$. 
It is clear that such an ideal  is a sum of root spaces. 
For any $I\subseteq \Phi^+$, the sum of root spaces $\bigoplus_{\al\in I} \mf g_\al$ is an ad-nilpotent ideal of $\mf b$ if and only if, for all $\al, \beta\in \Phi^+$, if $\al\in I$ and $\al\leq \beta$, then $\beta\in I$. 
A subset $I$ of $\Phi^+$ with this property is called an ad-nilpotent ideal of $\Phi^+$.
Clearly, this is filter in $(\Phi, \leq)$, i.e. a dual order ideal. 
It is easy to see that an abelian ideal of $\mf b$ must be ad-nilpotent. 
For any  $I\subseteq \Phi^+$, the subspace $\bigoplus_{\al\in I} \mf g_\al$ is an abelian ideal of $\mf b$ if and only if $I$ is an ad-nilpotent ideal of $\Phi^+$ with the further property that, for all $\al, \beta\in I$, $\al+\beta\not\in \Phi$. 
Such an $I$ is called an abelian ideal of $\Phi^+$.
The  abelian ideals of $\Phi^+$ are studied in several papers, both for their implications in representation theory and for their proper algebraic-combinatorial interest. 
The main representation theoretic motivations  can be found in in \cite{Ko1, Ko2} (see also \cite{CMP}); the basic algebraic-combinatorial results can be found \cite{CPadnilp1}, \cite{CPab}, \cite{Pan}, \cite{Su}.

\subsection{Abelian nilradicals.}\label{subsec-abnil}

\par
An ad-nilpotent ideal of $\Phi^+$ is called {\it principal} if it has a minimum, i.e. if the corresponding $\mf b$-ideal is principal. 
For all $\be\in \Phi^+$, the upper $\leq$-cone of $\be$, $(\be^\leq)=\{\ga\in \Phi^+\mid \be\leq \ga\}$ is also called the principal ad-nilpotent ideal generated by $\be$.  
It is clear that if $\be\in \Phi^+$ is such that $c_\al(\be)>\frac{m_\al}{2}$ for some $\al\in \Pi$, then $(\be^\leq)$ is abelian. 
In particular, this happens if $\be$ is a simple root of  multiplicity $1$ in $\Phi^+$. 
Indeed, the following well known result holds. 
The proof is brief, so we include it.
\par

\begin{pro}\label{simpleprincipal}
Let $S\subseteq\Pi$ and $I=\Phi^+\smeno \Phi(S)$. 
Then $I$ is an ad-nilpotent ideal. 
Moreover, $I$ is abelian if and only if either $S=\Pi$, or $S=\Pi\smeno\{\al\} $ for  a simple root $\al$ such that $m_\al=1$. 
In this case, $I$ is equal to $(\alpha^\leq)$ and is a maximal abelian ideal.
\end{pro}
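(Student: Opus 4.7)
The plan is to establish the proposition in four stages: (i) $I$ is ad-nilpotent, (ii) the ``if'' direction of the abelianness characterization together with the identification $I=(\al^\leq)$, (iii) the ``only if'' direction, and (iv) maximality. The key tool throughout (iii) and (iv) is the standard fact that any two comparable positive roots are linked by an ascending chain in $\Phi^+$ whose consecutive differences are simple roots.

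For (i), $\be\in I$ means $c_{\al'}(\be)>0$ for some $\al'\in\Pi\smeno S$, and $\be\leq\ga$ forces $c_{\al'}(\ga)\geq c_{\al'}(\be)>0$, so $\ga\in I$. For the easy direction of (ii), if $S=\Pi$ then $I=\emptyset$ is trivially abelian, and if $S=\Pi\smeno\{\al\}$ with $m_\al=1$, every $\be\in I$ has $c_\al(\be)=1$, so $c_\al(\be_1+\be_2)=2>m_\al$ prevents $\be_1+\be_2$ from being a root. Comparing simple-root coefficients in this case further shows that $\be\geq\al$ is equivalent to $c_\al(\be)\geq 1$, hence $I=(\al^\leq)$.

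For the ``only if'' direction I argue the contrapositive in two cases. If $|\Pi\smeno S|\geq 2$, take distinct $\al_1,\al_2\in\Pi\smeno S$; irreducibility of $\Phi$ provides a path $\al_1=\de_1,\de_2,\ldots,\de_k=\al_2$ through adjacent simple roots in the Dynkin diagram, and induction on $i$ using $(\de_1+\cdots+\de_i,\de_{i+1})=(\de_i,\de_{i+1})<0$ shows each $\de_1+\cdots+\de_i$ is a positive root. Then $\al_1$ and $\de_2+\cdots+\de_k$ both lie in $I$ and sum to $\de_1+\cdots+\de_k\in\Phi^+$. In the remaining case $S=\Pi\smeno\{\al\}$ with $m_\al\geq 2$, take a chain $\al=\mu_0<\mu_1<\cdots<\mu_k=\theta$ in $\Phi^+$ with $\mu_j-\mu_{j-1}\in\Pi$; the coefficient $c_\al$ is nondecreasing, increments only when $\al$ itself is added, and climbs from $1$ to $m_\al\geq 2$, so at the first step where $c_\al=2$ we obtain $\mu_j=\mu_{j-1}+\al$ with $\mu_{j-1}\in I$, yielding a non-abelian pair in $I$.

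For maximality, suppose $J\supsetneq I=(\al^\leq)$ is abelian and pick any $\ga\in J\smeno I$, so that $c_\al(\ga)=0$. Take a chain $\ga=\mu_0<\mu_1<\cdots<\mu_k=\theta$ in $\Phi^+$ with simple-root differences; since $c_\al$ must rise from $0$ to $m_\al=1$, there is a unique step at which $\al$ is added, giving $\mu_j=\mu_{j-1}+\al$ with $\mu_{j-1}\geq\ga$. Because $J$ is a filter containing $\ga$, we have $\mu_{j-1}\in J$, and combined with $\al\in I\subseteq J$ this produces the forbidden sum $\mu_{j-1}+\al=\mu_j\in\Phi^+$, contradicting abelianness of $J$. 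Nothing genuinely case-by-case arises; the only non-routine ingredient is the ``elementary step'' property of $(\Phi^+,\leq)$, which is not a real obstacle.
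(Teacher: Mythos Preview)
Your proof is correct, and it takes a somewhat different route from the paper in parts (iii) and (iv).

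For the ``only if'' direction, the paper argues uniformly: it introduces the height function $\alt_{\Pi\smeno S}(\be)=\sum_{\al\in\Pi\smeno S}c_\al(\be)$, observes that the hypothesis is equivalent to this height reaching a value $\geq 2$, picks a $\leq$-minimal root $\be^*$ with $\alt_{\Pi\smeno S}(\be^*)>1$, and uses positivity of $(\be^*,\be^*)$ together with minimality to find some $\al\in\Pi\smeno S$ with $\be^*-\al\in I$. You instead split into the two cases $|\Pi\smeno S|\geq 2$ (handled by a Dynkin-diagram path) and $|\Pi\smeno S|=1$ with $m_\al\geq 2$ (handled by a saturated chain to $\theta$). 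Both are elementary; the paper's version is a bit more compact, while yours is more explicit and constructive. One small point to make precise: in the path argument you should note that $\de_2+\cdots+\de_k$ is itself a root (by the same induction applied to the subpath), and the equality $(\de_1+\cdots+\de_i,\de_{i+1})=(\de_i,\de_{i+1})$ uses that the Dynkin diagram is a tree, so $\de_{i+1}$ is orthogonal to all earlier $\de_j$ with $j<i$.

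For maximality, the paper says that any strictly larger ideal must contain the highest root of $\Phi(S)$, which is summable to $\al$. Your chain argument from $\ga$ to $\theta$, locating the unique step at which $\al$ is added, is a clean alternative; it avoids the mild imprecision in the paper's phrasing when $\Phi(S)$ is reducible (one really means the highest root of the component of $\Phi(\Pi\smeno\{\al\})$ containing $\ga$), and it makes no appeal beyond the standard fact that comparable positive roots are joined by chains with simple-root increments.
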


\begin{proof}
It is clear that in any case $I$ is an ad-nilpotent ideal. 
For $S=\Pi$ we obtain the empty root ideal. 
Let  $\Pi\smeno S=\{\al\}$ with $\al\in \Pi$ and $m_\al=1$. 
Then by definition we have $I=(\al^\leq)$, and it is clear that  this is abelian.
It remains to prove that it is maximal. 
If $S= \emptyset$, then $I=\Phi^+$ and the claim is obvious. 
If $S\neq \emptyset$,  any nilpotent ideal  $J$ that strictly contains $I$ also contains the highest root of $\Phi(S)$, and it is clear that this last root is summable to~$\al$.
Hence, $I$ is in any case a maximal abelian ideal.
\par
Now, for all $\be\in \Phi$, let $\alt_{\Pi\smeno S}(\be)=\sum\limits_{\al\in\Pi\smeno S}c_\al(\be)$.
It is clear that the condition $\max\{\alt_{\Pi\smeno S}(\be)\mid\be\in \Phi\}= 1$ is equivalent to $\Pi\smeno S=\{\al\}$ and $m_\al=1$. 
In order to conclude the proof, we assume $\max\{\alt_{\Pi\smeno S}(\be)\mid\be\in \Phi\}> 1$ and  prove that in this case $I$ is not abelian.
Let $\be^*\in \hbox{$\Min\{\be\in \Phi\smeno\Phi(S)\mid \alt_{\Pi\smeno S}(\be)>1\}$}$. 
Then $(\be^*, \al')\leq 0$ for all $\al'\in S$ and, since $(\be^*, \be^*)> 0$, we must have $(\be^*, \al)>0$ for some  $\al\in  \Pi\smeno S$.  
For such an $\al$, $\be^*-\al$ is a root and belongs to $I$, since $\alt_{\Pi\smeno S}(\be^*-\al)>0$. 
But $\be^*-\al$ is summable to $\al$, that also belongs to $I$,  hence $I$ is not abelian. 
\end{proof}

\par
For each $S\subseteq \Pi$, $\bigoplus\limits_{\al\in \Phi^+\smeno \Phi(S)}\mf g_\al$ is the nilradical (the largest nilpotent ideal) of the standard parabolic subalgebra  associated to $S$ (see \cite[Ch. VIII, \S 3.4]{Bou2}). 
Hence, we will call the maximal abelian ideals $(\al^\leq)$ with $m_\al=1$, together  with the empty root ideal, {\it the abelian nilradicals}. 

\subsection{The faces of the root polytope.}\label{subsec-faces}
We recall some ideas and results from \cite{CM1}. 
For all $\al\in \Pi$ and  all $S\subseteq \Pi$,  let
$$
H_{\al, m_\al}=\{x\in E\mid(x, \check\omega_\al)=m_\al\}, \qquad 
\mathrm F_\al=H_{\al, m_\al}\cap \pol,
\qquad
\mathrm F_S=\bigcap_{\al\in S} \mathrm F_\al.
$$
It is clear that the hyperplanes  all $H_{\al, m_\al}$ are supporting hyperplanes of $\pol$, hence the $\mathrm F_\al$ and  $\mathrm F_S$ are faces of $\pol$.  
We call them the {\it standard parabolic faces}. 
In fact, the set of all standard parabolic faces is a set of representatives of the orbits of the action of the Weyl group $W$ on the set of all faces of $\pol$ \cite{CM1}. 

\par
For each standard parabolic face $F$, let 
$$
I_F=F \cap \Phi.
$$
By definition,  for each $S\subseteq \Pi$, $I_{\mathrm F_S}$ is the set of all roots $\be$ such that $c_\al(\be)=m_\al$, for all $\al\in S$. 
It is easy to see that $\pol$ is the convex hull of the long roots (see  e.g. \cite{CM3}), hence the long roots in $I_{\mathrm F_S}$ are the vertexes of the face $\mathrm F_S$. 
\par

For each $S\subseteq\Pi$, let 
$$ S^\eo=\{\theta\}\cup  -S.$$ 
Moreover, let  ${S^\eo_\theta}$ be the subset of $S^\eo$ defined by the condition that $\Phi({S^\eo_\theta})$ is the irreducible component of $\Phi(S^\eo)$ containing $\theta$. 
Finally, let $S_\theta={S^\eo_\theta}\smeno \{\theta\}$.
\par

It is clear that $\Pi^\eo$ is the set of vertexes of the extended Dynkin graph of $\Phi$ with respect to the simple system $-\Pi$.  
We will call this extended Dynkin graph {\it the opposite } extended Dynkin graph (of $\Phi$).
Thus, by definition, the subgraph induced by ${S^\eo_\theta}$  in the opposite extended Dynkin graph is the connected component of $\theta$.
\par
It is immediate from the theory of affine root system, and very easy to see directly, that, for each {\it proper} subset $S$ of $\Pi$, $S^\eo$  is a simple system for the root subsystem $\Phi(S^\eo)$ that it generates.
\par

The following proposition contains the preliminary results on the standard parabolic faces that we need. 
We note that the proposition also precises that the face $F_S$ does not determine $S$. 
Indeed, by item (1) for all $S, S'\subseteq \Pi$, $F_S=F_{S'}$ if and only if  $\Phi^+((\Pi\smeno S)^\eo_\theta) = \Phi^+((\Pi\smeno S')^\eo_\theta)$, 
i.e., $F_S$ is uniquely determined by the irreducible component $\Phi^+((\Pi\smeno S)^\eo_\theta)$.
In particular, the standard parabolic faces, and therefore the $W$-orbits of faces, are in bijection with the proper connected subgraphs of the opposite extended Dynkin graph that contains the vertex $\theta$ \cite{V}.

\vbox{
\begin{pro}\cite{CM1}\label{teo-facce}
Let $S\subseteq \Pi$, $S\neq\emptyset$.
\begin{enumerate}
\item 
$I_{\mathrm F_S}=\Phi^+((\Pi\smeno S)^\eo)\smeno \Phi(\Pi\smeno S)=\Phi^+((\Pi\smeno S)^\eo_\theta)\smeno \Phi((\Pi\smeno S)_\theta)$. 
\item
Let $\mu_S$ be the highest root of $\Phi((\Pi\smeno S)^\eo_\theta)$, with respect to the simple system  $(\Pi\smeno S)^\eo_\theta$. Then,
$I_{\mathrm F_S}$  is the principal abelian ideal of $\Phi^+$ generated by $\mu_S$.
\item
$\dim(\mathrm F_S)=|(\Pi\smeno S)_\theta|$.
\end{enumerate}
\end{pro}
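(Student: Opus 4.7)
The plan is to set $T:=\Pi\smeno S$, which is a proper subset of $\Pi$ since $S\neq\emptyset$, and to work in parallel with the simple system $\Pi$ of $\Phi$ and the simple system $T^\eo=\{\theta\}\cup(-T)$ of the finite irreducible root subsystem $\Phi(T^\eo)$. The entire argument will be driven by one elementary rewriting: for $\be\in\Phi^+$, the condition $c_\al(\be)=m_\al$ for every $\al\in S$ is equivalent to $\theta-\be\in L^+(T)$, i.e., to
$$
\be \,=\, \theta - \sum_{\al\in T}\bigl(m_\al-c_\al(\be)\bigr)\al,
$$
which displays $\be$ as a non-negative integer combination of the elements of $T^\eo$. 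Read in reverse, the same identity constrains the shape of any root of $\Phi$ lying in $\Phi^+(T^\eo)$: writing such a root as $k\theta-\sum_{\al\in T}k'_\al\al$ with $k,k'_\al\geq 0$, the bound $c_\al(\be)\leq m_\al$ on $S$ forces $k=1$.

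For (1), the forward reading of this identity places every $\be\in I_{\mathrm F_S}$ into $\Phi^+(T^\eo)$, and since $\theta$ enters with coefficient $1$ only simple roots from its connected component can contribute, giving $\be\in\Phi^+(T^\eo_\theta)$ and $\be\notin\Phi(T_\theta)$ (because $c_\al(\be)=m_\al>0$ for $\al\in S$). Conversely, any $\be\in\Phi^+(T^\eo_\theta)\smeno\Phi(T_\theta)$ has the indicated form with $k\geq 1$, hence $k=1$, hence $\be\in I_{\mathrm F_S}$. For (2), $I_{\mathrm F_S}$ is ad-nilpotent (upward closed in $(\Phi^+,\leq)$) and abelian, since if $\be_1,\be_2\in I_{\mathrm F_S}$ then $c_\al(\be_1+\be_2)=2m_\al>m_\al$ for $\al\in S$ prevents $\be_1+\be_2$ from being a root. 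The highest root $\mu_S$ of the irreducible system $\Phi(T^\eo_\theta)$ has strictly positive coefficients on every element of $T^\eo_\theta$, and the $k=1$ rigidity pins its coefficient at $\theta$ to $1$, so $\mu_S\in I_{\mathrm F_S}$. For any $\be\in I_{\mathrm F_S}$, both $\be$ and $\mu_S$ sit in $\Phi^+(T^\eo_\theta)$ with $\theta$-coefficient $1$, so by the maximality of $\mu_S$ the difference $\mu_S-\be$ is a non-negative combination of $T^\eo_\theta$ with $\theta$-coefficient $0$, i.e., of $-T_\theta$; hence $\be-\mu_S\in L^+(T_\theta)\subseteq L^+(\Phi)$, so $\be\geq \mu_S$. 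The reverse inclusion $(\mu_S^\leq)\subseteq I_{\mathrm F_S}$ follows from the sandwich $m_\al=c_\al(\mu_S)\leq c_\al(\be)\leq m_\al$ for $\al\in S$.

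For (3), the shape $\be=\theta-\sum_{\al\in T_\theta}k'_\al\al$ forced by (1) traps $\mathrm F_S$ inside the affine subspace $\theta+\gen(T_\theta)$, of dimension $|T_\theta|$. Equality is obtained by producing $|T_\theta|+1$ affinely independent points in $I_{\mathrm F_S}$: because $\mu_S-\theta=-\sum_{\al\in T_\theta}m'_\al\al$ involves every $\al\in T_\theta$ with positive multiplicity, and because $T^\eo_\theta$ is connected, one runs chains of positive roots of $\Phi^+(T^\eo_\theta)$ from $\theta$ down to $\mu_S$ by successive addition of simple roots in $-T_\theta$, and the intermediate roots shifted by $-\theta$ supply a spanning set of $\gen(T_\theta)$. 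The main obstacle throughout is the coordinate bookkeeping between the two simple systems $\Pi$ and $T^\eo_\theta$: everything hinges on the rigidity $k=1$, which combines $c_\al(\be)\leq m_\al$ in $\Phi^+$ with the strict positivity of $c_\al(\be)$ on $S$, and which has to be secured first, after which the rest unfolds formally.
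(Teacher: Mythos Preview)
The paper does not give its own proof of this proposition: it is quoted from \cite{CM1} and stated without argument. Your self-contained proof is correct and follows the natural line one would extract from the surrounding discussion in the paper (in particular Remark~\ref{rem-teo-facce-2} and Corollary~\ref{cor-teo-facce}): rewrite the defining condition $c_\al(\be)=m_\al$ for $\al\in S$ as $\theta-\be\in L^+(\Pi\smeno S)$, read this as an expansion of $\be$ in the simple system $(\Pi\smeno S)^\eo$, and use that the $\theta$-coefficient is forced to be $1$.

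Two small remarks. First, in your opening sentence you call $\Phi(T^\eo)$ ``irreducible''; in general it is not---only its $\theta$-component $\Phi(T^\eo_\theta)$ is. You use this correctly two lines later, so the slip is harmless, but it should be removed. Second, for part~(3) your spanning argument via a single saturated chain from $\theta$ to $\mu_S$ in $\widetilde{\Phi^+}(T^\eo_\theta)$ is valid, but the reason deserves one more sentence: since $\mu_S$ has full support on $T^\eo_\theta$, every element of $T_\theta$ occurs as some step $\be_{i+1}-\be_i$ of the chain, hence lies in the linear span of the differences $\be_i-\theta$; this is what makes the shifted chain span $\gen(T_\theta)$.
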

}
By definition of $I_{\mathrm F_S}$, statement (2)  says that $\mu_S$ is the unique minimal root  such that $c_\al(\mu_S)=m_\al$ for all $\al\in S$.
Both (1) and (2) implies that  we have $c_\al(\mu_S)<m_\al$ if and only if $\al\in (\Pi\smeno S)_\theta$. 
Hence, for all $\be\in \Phi^+$, the condition $c_\al(\be)=m_\al$ for all $\al\in S$ implies $c_\al(\be)=m_\al$ also for all  $\al\in \Pi\smeno (\Pi \smeno S)_\theta$, which in general is greater than $S$. 

\begin{rem}\label{rem-teo-facce-2}
Let $\wt{\Phi^+}((\Pi \smeno S)^\eo)$ be the positive system of $\Phi((\Pi \smeno S)^\eo)$ relative to  the simple system $(\Pi \smeno S)^\eo$.
It is clear that for $S\neq \emptyset$ this positive system is different from $\Phi^+((\Pi \smeno S)^\eo)$, which  is the intersection $\Phi(\Pi \smeno S)\cap \Phi^+$, by definition.  
However, $\Phi^+((\Pi \smeno S)^\eo)\smeno \Phi(\Pi \smeno S)=$ \hbox{$\wt{\Phi^+}((\Pi \smeno S)^\eo)\smeno \Phi(\Pi \smeno S)$}. 
The same holds with $(\Pi \smeno S)_\theta$ in place of~$\Pi \smeno S$. 
Therefore, in Proposition~\ref{teo-facce} (1) we may replace  $\Phi^+$ with  $\wt {\Phi^+}$.
\end{rem}

By the above remark, Proposition~\ref{teo-facce} (1) is equivalent to the following corollary.

\begin{cor}\label{cor-teo-facce}
The set $I_{\mathrm F_S}$ is the principal ideal generated by $\theta$  in the positive system~$\wt{\Phi^+}((\Pi \smeno S)^\eo_\theta)$ of the irreducible root system $\Phi((\Pi \smeno S)^\eo_\theta)$. 
\end{cor}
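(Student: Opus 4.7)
My plan is to derive the corollary directly from Proposition~\ref{teo-facce}(1) combined with Remark~\ref{rem-teo-facce-2}, by translating the set-theoretic description of $I_{\mathrm F_S}$ into order-theoretic language with respect to the new simple system $(\Pi\smeno S)^\eo_\theta$ of the irreducible root system $\Phi((\Pi\smeno S)^\eo_\theta)$.

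First I would combine the second equality of Proposition~\ref{teo-facce}(1) with Remark~\ref{rem-teo-facce-2} to rewrite
$$
I_{\mathrm F_S}=\wt{\Phi^+}((\Pi\smeno S)^\eo_\theta)\smeno \Phi((\Pi\smeno S)_\theta).
$$
The claim then reduces to showing that this set coincides with the $\wt{\leq}$-upper cone of $\theta$ in the poset $(\wt{\Phi^+}((\Pi\smeno S)^\eo_\theta),\wt{\leq})$, where $\wt{\leq}$ is the partial order associated with the simple system $(\Pi\smeno S)^\eo_\theta$.

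Since $(\Pi\smeno S)^\eo_\theta=\{\theta\}\cup (\Pi\smeno S)_\theta$ is a simple system of the irreducible root system $\Phi((\Pi\smeno S)^\eo_\theta)$ and contains $\theta$, every $\be\in \wt{\Phi^+}((\Pi\smeno S)^\eo_\theta)$ admits a unique expansion
$$
\be=n_\theta(\be)\,\theta+\sum_{\ga\in (\Pi\smeno S)_\theta} n_\ga(\be)\,\ga,\qquad n_\theta(\be),\,n_\ga(\be)\in\nat.
$$
The positive roots with $n_\theta(\be)=0$ are exactly those lying in the parabolic subsystem $\Phi((\Pi\smeno S)_\theta)$, so the right-hand side of the first display is $\{\be\in\wt{\Phi^+}((\Pi\smeno S)^\eo_\theta)\mid n_\theta(\be)\ge 1\}$. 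On the other hand, $\be$ lies in the upper cone $(\theta^{\wt{\leq}})$ precisely when $\be-\theta$ is a non-negative integer combination of $(\Pi\smeno S)^\eo_\theta$, which --- since the $n_\ga(\be)$ are already in $\nat$ --- amounts to $n_\theta(\be)\ge 1$. Comparing the two descriptions proves the corollary.

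There is no genuine obstacle: as the paragraph preceding the statement already points out, the corollary is logically equivalent to Proposition~\ref{teo-facce}(1) via Remark~\ref{rem-teo-facce-2}. The only content of the argument is the bookkeeping identification between \emph{having coefficient at least one on $\theta$ in the simple system $(\Pi\smeno S)^\eo_\theta$} and \emph{lying in the upper cone of $\theta$}, an equivalence that is automatic because $\theta$ itself is a simple root of $\Phi((\Pi\smeno S)^\eo_\theta)$ with respect to the simple system $(\Pi\smeno S)^\eo_\theta$.
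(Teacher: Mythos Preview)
Your proposal is correct and follows exactly the approach the paper indicates: the paper does not give a separate proof of the corollary but simply notes that, via Remark~\ref{rem-teo-facce-2}, it is equivalent to Proposition~\ref{teo-facce}(1). You have merely made this equivalence explicit by unpacking the condition ``$n_\theta(\be)\ge 1$'' as the definition of the upper cone of the simple root~$\theta$.
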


\subsection{The order involution of face ideals}\label{subsec-face-inv}

For all $w\in W$, let $$N(w)=\{\be\in \Phi^+\mid w(\be)\leq \0\}.$$
For all $S\subseteq \Pi$, let $w_{0, S}$ be the longest element in the standard parabolic subgroup of $W$ generated by $\{s_\al\mid \al\in S\}$.
It is well known that $w_{0, S}$ is an involution and is determined by the condition $N(w_{0, S})=\Phi^+(S)$.

\begin{pro}\label{face-inv}
Let $S\subseteq \Pi$ and $w^*_{S}=w_{0,(\Pi\smeno S)}$. 
Then, the restriction of $w^*_{S}$ to $I_{\mathrm F_S}$ is an anti-isomorphism of the poset $(I_{\mathrm F_S}, \leq)$. 
In particular, $w^*_{S}$ exchange $\theta$ and $\mu_S$.
\end{pro}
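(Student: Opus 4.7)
The plan is to check that $w^*_S$ preserves the face ideal $I_{F_S}$ setwise and then that it reverses the standard partial order on it; since $w^*_S$ is an involution, these two facts immediately yield the stated anti-isomorphism, and the exchange of $\theta$ and $\mu_S$ follows because an order anti-isomorphism must swap the maximum and the minimum.

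For step one, I would observe that for every $\al\in S$ and every $\al'\in \Pi\smeno S$ one has $s_{\al'}(\check\omega_\al)=\check\omega_\al$, because $(\al',\check\omega_\al)=0$ by definition of the fundamental coweights. Hence $w^*_S$, being a product of such simple reflections, fixes $\check\omega_\al$ for every $\al\in S$. Using that $w^*_S$ is orthogonal, $c_\al(w^*_S(\be))=(w^*_S(\be),\check\omega_\al)=(\be,\check\omega_\al)=c_\al(\be)$ for every $\al\in S$ and every $\be\in\mathrm E$. Since by the description recalled in Subsection \ref{subsec-faces} an element $\be\in\Phi$ belongs to $I_{F_S}$ if and only if $\be\in\Phi^+$ and $c_\al(\be)=m_\al$ for all $\al\in S$ (and since $w^*_S$ permutes $\Phi^+\smeno \Phi(\Pi\smeno S)\supseteq I_{F_S}$, by the standard property $N(w_{0,\Pi\smeno S})=\Phi^+(\Pi\smeno S)$), it follows that $w^*_S(I_{F_S})=I_{F_S}$.

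For step two, let $\be_1,\be_2\in I_{F_S}$ with $\be_1\leq\be_2$. Since $c_\al(\be_1)=c_\al(\be_2)=m_\al$ for all $\al\in S$, the difference $\be_2-\be_1$ has vanishing coefficient on every $\al\in S$, so it is a non-negative integer combination of roots in $\Pi\smeno S$. Now $w^*_S$, being the longest element of the parabolic subgroup $W_{\Pi\smeno S}$, sends $\Pi\smeno S$ to $-(\Pi\smeno S)$ (up to an involutive permutation of that set); hence it sends the $\nat$-span of $\Pi\smeno S$ into $-L^+(\Phi)$. Therefore $w^*_S(\be_2)-w^*_S(\be_1)=w^*_S(\be_2-\be_1)\in -L^+(\Phi)$, i.e. $w^*_S(\be_2)\leq w^*_S(\be_1)$, proving that $w^*_S$ reverses $\leq$ on $I_{F_S}$.

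Step three combines these: $w^*_S$ is an involution, hence a bijection of $I_{F_S}$, and order-reversing, so it is an anti-isomorphism of the poset $(I_{F_S},\leq)$. Finally, by Proposition~\ref{teo-facce}~(2) (and the fact that $\theta\in I_{F_S}$), $\mu_S=\min I_{F_S}$ and $\theta=\max I_{F_S}$; since any order anti-isomorphism must swap the unique minimum with the unique maximum, we conclude $w^*_S(\theta)=\mu_S$ and $w^*_S(\mu_S)=\theta$. I do not anticipate a genuine obstacle; the only point that requires care is keeping track of the fact that $w^*_S$ sends $\Pi\smeno S$ to $-(\Pi\smeno S)$ as a set (rather than root-by-root), which is exactly what is needed to push $\nat$-combinations of $\Pi\smeno S$ into $-L^+(\Phi)$.
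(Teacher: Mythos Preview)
Your argument is correct and follows essentially the same line as the paper's proof: both establish that $w^*_S$ preserves $I_{\mathrm F_S}$ and then that it reverses $\leq$ there via the fact that $w^*_S$ sends $\Pi\smeno S$ to $-(\Pi\smeno S)$. The only cosmetic difference is in step one: the paper observes directly that $I_{\mathrm F_S}=(\theta+L(\Phi(\Pi\smeno S)))\cap\Phi$ is stable under each $s_{\al'}$ with $\al'\in\Pi\smeno S$, whereas you phrase the same invariance via $w^*_S$ fixing the coweights $\check\omega_\al$ for $\al\in S$.
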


\begin{proof}
By definition, $I_{\mathrm F_S}=(\theta+L(\Phi(\Pi\smeno S)))\cap \Phi$ and,
obviously, for all $\al\in \Pi\smeno S$, $s_\al(\theta)\in \theta+L(\Phi(\Pi\smeno S))$. Hence it is clear that  $w^*_{S}$ acts on $I_{\mathrm F_S}$. 
\par
It remains to prove that $w^*_{S}$ reverses the standard partial order on $I_{\mathrm F_S}$.
Let $\be, \be'\in I_{\mathrm F_S}$ and $\be< \be'$. 
Then  $\be'-\be\in L^+(\Phi(\Pi\smeno S))$, and  
since $w^*_{S}(\al)< \0$ for all $\al\in (\Pi\smeno S)$, $w^*_{S}(\be')-w^*_{S}(\be)=w^*_{S}(\be-\be')\in -L^+(\Phi(\Pi\smeno S))$, i.e.  $w^*_{S}(\be') < w^*_{S}(\be)$. 
\end{proof}

We note that, by Proposition~\ref{teo-facce}(1), the above proposition holds also with  $w_{0,(\Pi\smeno S)_\theta}$ in place of $w^*_{S}$. 
In particular, the restrictions of $w_{0,(\Pi\smeno S)_\theta}$ and of $w^*_{S}$ on $I_{\mathrm F_S}$ coincide.
\par

\begin{defi}
We call $w^*_S$ the {\it face involution} of $F_S$ and the restriction of $w^*_S$
to $I_{\mathrm F_S}$ the {\it order involution} of  $I_{\mathrm F_S}$. 
\end{defi}

\section{Face ideals and abelian nilradicals}\label{sec-face-ideals}

In this section we prove that the abelian nilradicals of $\Phi^+$ are facet ideals and that all face ideals are abelian nilradicals in some ireducible subsystem of $\Phi$.  
\par

By Proposition~\ref{teo-facce}, the standard parabolic facets of $\pol$ are the faces of type $\mathrm F_{\al}$ with $\al\in \Pi$ such that  $\Phi((\Pi\smeno\{\al\})^\eo)$ is irreducible. 
Equivalently, a face $\mathrm F_{\al}$ ($\al\in \Pi$) is a facet if and only if $\al$ is a leaf of the extended Dynkin diagram. 
In next results we prove that this happens, in particular, if $m_\al=1$. 

\begin{pro}\label{ab-nil-facet}
Each nonempty abelian nilradical of $\Phi^+$ is a facet ideal.
\end{pro}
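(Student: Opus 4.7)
The plan is to combine Proposition~\ref{simpleprincipal} with Proposition~\ref{teo-facce}. By Subsection~\ref{subsec-abnil}, a nonempty abelian nilradical of $\Phi^+$ is a maximal abelian ideal of the form $(\al^\leq)$ with $\al\in\Pi$ and $m_\al=1$, so I only need to show that for such an $\al$, the ideal $(\al^\leq)$ equals $I_{\mathrm F_\al}$ and $\mathrm F_\al$ is a facet.

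The identification $(\al^\leq)=I_{\mathrm F_\al}$ is a direct check from the definitions. By definition, $I_{\mathrm F_\al}=\{\be\in\Phi\mid c_\al(\be)=m_\al\}=\{\be\in\Phi^+\mid c_\al(\be)=1\}$, where the restriction to $\Phi^+$ is automatic since $m_\al=1$. If $\ga\in\Phi^+$ satisfies $c_\al(\ga)=1$, then $\ga-\al=\sum_{\al'\neq\al}c_{\al'}(\ga)\al'\in L^+(\Phi)$, so $\ga\geq\al$; conversely $\ga\geq \al$ forces $c_\al(\ga)\geq c_\al(\al)=1=m_\al$, and equality follows.

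The main point is showing that $\mathrm F_\al$ is a facet, i.e.\ that $\Phi((\Pi\smeno\{\al\})^\eo)$ is irreducible (equivalently, by Proposition~\ref{teo-facce}(3), that $(\Pi\smeno\{\al\})_\theta=\Pi\smeno\{\al\}$, giving $\dim\mathrm F_\al=|\Pi|-1$). This amounts to proving that deleting the vertex $\al$ from the extended Dynkin diagram leaves it connected. I would use the balance identity for the null root: $a_{-\theta}=1$ and $a_{\al'}=m_{\al'}$ for $\al'\in\Pi$ form the kernel of the affine Cartan matrix, so
\[
\sum_{w\sim v}|A_{vw}|\,a_w=2a_v
\]
at every vertex $v$ of the affine diagram. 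When $a_\al=1$, the possibilities are restricted to:
\begin{enumerate}
\item $\al$ has exactly one neighbor (so $\al$ is a leaf of the extended Dynkin diagram), or
\item $\al$ has exactly two neighbors $w_1,w_2$, joined by simple bonds, with $a_{w_1}=a_{w_2}=1$.
\end{enumerate}
Case (1) is immediate: removing a leaf from a connected graph keeps it connected. In case (2), iterating the same dichotomy at $w_1$ and $w_2$ forces a connected subgraph of multiplicity-$1$ vertices joined by simple bonds in which every vertex has degree exactly~$2$, hence (by finiteness) a cycle through $\al$. Since $A_n^{(1)}$ is the only affine Dynkin diagram containing a cycle, we must be in type $\mathrm A_n$, and there removing any vertex from the cyclic diagram yields a path.

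The main obstacle is the case (2) argument; it is the only place requiring more than a one-line deduction. The alternative is to consult the list of extended Dynkin diagrams and verify directly, in every type, that each simple root of multiplicity~$1$ is either a leaf or lies on the cycle of an $\mathrm A_n$ diagram, so that its removal preserves connectivity.
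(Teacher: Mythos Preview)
Your proof is correct in outline and more self-contained than the paper's, which simply cites the classical Iwahori--Matsumoto fact that removing any multiplicity-$1$ vertex from the extended Dynkin diagram yields a graph isomorphic to the finite Dynkin diagram of $\Phi$ (hence connected, since $\Phi$ is irreducible). You also make explicit the identification $(\al^\leq)=I_{\mathrm F_\al}$, which the paper leaves implicit via Proposition~\ref{teo-facce}(2).

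There is, however, a small gap in your case~(2). From $2=\sum_{w\sim\al}|A_{\al w}|\,a_w$ with two neighbours you correctly deduce $a_{w_i}=1$ and $|A_{\al w_i}|=1$, but this fixes only \emph{one} of the two Cartan integers between $\al$ and $w_i$; it does not force the bond to be simple. If instead $|A_{w_1\al}|=2$, then the balance at $w_1$ reads $2=|A_{w_1\al}|\cdot a_\al=2$, so $w_1$ would be a leaf adjacent only to $\al$, and removing $\al$ would isolate it. This configuration does not in fact occur among untwisted affine diagrams, but ruling it out is essentially a classification check---precisely the case-by-case alternative you mention at the end, and precisely what the paper does by citing \cite{IM}. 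So the balance argument as written does not quite bypass the classification; you can patch it cleanly by invoking the (equally well-known) fact that extended Dynkin diagrams are trees except for $\widetilde{\mathrm A}_n$, so that a non-leaf $\al$ already forces the cycle case, or else fall back on the direct inspection you describe.
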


\begin{proof}
It is well known that if $\al$ is any simple root such that $m_\al=1$,  then the subgraph of the extended Dynkin graph obtained by removing $\al$ is isomorphic to the (ordinary) Dynkin graph of $\Phi$ \cite{IM}. 
In particular, $\Phi((\Pi\smeno \{\al\})^\eo)$ is irreducible. 
\end{proof}

We note that the fact that the simple roots $\al$ with $m_\al=1$ are leafs of the extended Dynkin diagram is also a consequence of Proposition ~\ref{face-inv}.
Indeed,  if $m_\al=1$, then $\al$ is the minimum of $I_{\mathrm F_\al}$,
hence, the order involution $w_{0,\Pi\smeno\{\al\}}$ maps $\al$ onto $\theta$. 
Since it also maps $\Pi\smeno\{\al\}$ onto  $-(\Pi\smeno\{\al\})$, it maps $\Pi$ onto the nodes of the opposite extended Dynkin graph minus $-\al$. 
\par

It is clear that the converse of Proposition \ref{ab-nil-facet} is not true, however the following result holds. 

\begin{pro}\label{face-ab-nil}
Each face ideal in $\Phi^+$ is an abelian nilradical of some irreducible root subsystem of $\Phi$. 
\end{pro}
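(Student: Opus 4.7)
The plan is to apply Proposition~\ref{simpleprincipal} inside the irreducible root subsystem $\wt\Psi := \Phi((\Pi\smeno S)^\eo_\theta)$ already singled out by Corollary~\ref{cor-teo-facce}. Setting $T := (\Pi\smeno S)^\eo_\theta$, this is a simple system of $\wt\Psi$ containing $\theta$, and the goal is to recognize $I_{F_S}$ as the abelian nilradical of $\wt\Psi$---with respect to the positive system $\wt{\Phi^+}(T)$ determined by $T$---attached to the simple root $\theta$.

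The steps I would carry out are the following. First, Proposition~\ref{teo-facce}(1) combined with Remark~\ref{rem-teo-facce-2} yields
$$
I_{F_S} \;=\; \wt{\Phi^+}(T)\smeno \Phi(T\smeno\{\theta\}),
$$
which is exactly the shape of ideal to which Proposition~\ref{simpleprincipal} applies---now read inside the irreducible root system $\wt\Psi$ with simple system $T$ and distinguished subset $T\smeno\{\theta\}$. Second, I would observe that $I_{F_S}$ is an abelian ideal of $\Phi^+$ (every face ideal is), and since $\wt\Psi$ is a root subsystem of $\Phi$, any sum of two elements of $\wt\Psi$ that is a root of $\Phi$ automatically lies in $\wt\Psi$; hence $I_{F_S}$ remains an abelian ideal when viewed inside the positive system $\wt{\Phi^+}(T)$ of $\wt\Psi$. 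Third, applying Proposition~\ref{simpleprincipal} to $\wt\Psi$ with subset $T\smeno\{\theta\}$, this abelianness forces the deleted set $\{\theta\}=T\smeno(T\smeno\{\theta\})$ to consist of a single simple root of multiplicity $1$ in the highest root of $\wt\Psi$ (the alternative $T\smeno\{\theta\}=T$ of Proposition~\ref{simpleprincipal} being ruled out by $\theta\in T$). Thus $m_\theta^{\wt\Psi}=1$, and $I_{F_S}$ coincides with the abelian nilradical of $\wt\Psi$ attached to $\theta$, as required.

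The only point needing a brief verification is the transfer of abelianness from $\Phi^+$ to $\wt\Psi^+$, which is automatic from the fact that $\wt\Psi$ is closed under addition of roots of $\Phi$. Apart from this, the argument is the direct composition of the intrinsic description of $I_{F_S}$ provided by Corollary~\ref{cor-teo-facce} with the characterization of abelian nilradicals via multiplicity one given in Proposition~\ref{simpleprincipal}; no case by case analysis is required.
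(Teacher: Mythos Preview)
Your proof is correct and reaches the same conclusion as the paper, but by a genuinely different route. Both arguments work inside the same irreducible subsystem $\wt\Psi=\Phi((\Pi\smeno S)^\eo_\theta)$ with simple system $T=(\Pi\smeno S)^\eo_\theta$, and both ultimately need that $\theta$ has multiplicity~$1$ as a simple root of $\wt{\Phi^+}(T)$. The paper verifies this directly: since $\theta$ is the highest root of $\Phi$, no root of $\Phi$ (hence of $\wt\Psi$) can involve $\theta$ with coefficient larger than~$1$ when expressed in the basis $T$. You instead invoke the abelianness of $I_{F_S}$, already established in Proposition~\ref{teo-facce}(2), transfer it to $\wt{\Phi^+}(T)$ (correctly, since $\wt\Psi$ is a subsystem of $\Phi$), and then read off $m_\theta^{\wt\Psi}=1$ from the converse direction of Proposition~\ref{simpleprincipal}. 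The paper's argument is slightly more self-contained and explains \emph{why} the multiplicity is~$1$; your argument is a neat application of the biconditional in Proposition~\ref{simpleprincipal}, trading that insight for a shorter deduction from facts already on record.
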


\begin{proof}
By Corollary ~\ref{cor-teo-facce}, any face ideal $I_{\mathrm F_S}$ ($S\subseteq \Pi$) is the principal ideal generated by $\theta$ in $\wt{\Phi^+}((\Pi\smeno S)^\eo)$.
\par
It clear that, for each $\be\in \Phi((\Pi\smeno S)^\eo)$,  in the expression of $\be$ as a linear combination of the base $(\Pi\smeno S)^\eo$, the coefficient of $\theta$ is at most $1$. 
In other words, the  multiplicity of $\theta$, as a simple root in the positive system $\wt{\Phi^+}((\Pi\smeno S)^\eo)$, is $1$.
Hence, the principal ideal generated by $\theta$ in $\wt{\Phi^+}((\Pi\smeno S)^\eo)$ is an abelian nilradical.
\end{proof}

\begin{rem}\label{rem-ab-nil}
Let $\al\in \Pi$ be such that $\mathrm F_\al$ is a facet. 
By Proposition~\ref{teo-facce}, $I_{\mathrm F_\al}$ is also equal to $(\mu_{\{\al\}}^\leq)$, where  $\mu_{\{\al\}}$  is the unique root in $\Phi$ such that $c_\al(\mu_{\{\al\}})=m_\al$ and $c_{\al'}(\mu_{\{\al\}})<m_{\al'}$ for all $\al'\in\Pi\smeno\{\al\}$.  
By Proposition ~\ref{face-inv}, the face involution $w_{\{\al\}}^*$ maps $(\Pi\smeno\{\al\})^\eo$ onto $\{\mu_{\{\al\}}\}\cup (\Pi\smeno\{\al\})$, therefore this last set is a simple system for $\Phi((\Pi\smeno\{\al\})^\eo)$. 
The positive system corresponding to it is the standard positive system $\Phi^+((\Pi\smeno\{\al\})^\eo)$ and, clearly, has $\theta$ as its highest root. 
It is also clear that  the multiplicity of $\mu_{\{\al\}}$, as a simple root in $\Phi^+((\Pi\smeno\{\al\})^\eo)$, is $1$. 
Thus, $I_{\mathrm F_\al}$ is the abelian nilradical generated by $\mu_{\{\al\}}$ in the  positive system $\Phi^+((\Pi\smeno\{\al\})^\eo)$.
\end{rem}

It is clear that the definition of ad-nilpotent and abelian ideals makes sense also in the reducible case. Let $\Psi$ be any finite crystallographic root system, $\Psi_1, \dots, \Psi_k$ be its irreducible components, $\Psi_i^+$ a positive system for $\Psi_i$, for $i=1, \dots, k$, and $\Psi^+=\Psi_1^+\cup\dots\cup \Psi_k^+$. 
Then, by definition, $I$ is an abelian ideal of $\Psi^+$ if and only if $I\cap \Psi_i^+$ is an abelian ideal of $\Psi_i^+$ for all $i\in\{1, \dots, k\}$. 
Moreover,  $I$ is an abelian nilradical of $\Psi^+$ if and only if $I\cap \Psi_i^+$ is an abelian nilradical of $\Psi_i^+$ for all $i\in\{1, \dots, k\}$. This means that $I\cap \Psi_i^+$, is either empty  or a principal ideal generated by a simple root with multiplicity~$1$.

\begin{lem}\label{induzione}
Let $I$ be an abelian nilradical of $\Phi^+$, $H$ a vector subspace in $\mathrm E$, and $\Psi=H\cap \Phi$.
Then $I\cap H$ is an abelian nilradical  of $\Psi^+$.
\end{lem}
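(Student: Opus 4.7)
The plan is to exploit the explicit description of non-empty abelian nilradicals from Proposition~\ref{simpleprincipal}: discarding the trivial case $I=\emptyset$, we may assume $I=(\al^\leq)=\{\be\in\Phi^+\mid c_\al(\be)=1\}$ for some $\al\in\Pi$ with $m_\al=1$. The key observation is that $c_\al$ takes values in $\{0,1\}$ on the whole of $\Phi^+$: indeed $c_\al\geq 0$ on $\Phi^+$ and $c_\al(\be)\leq c_\al(\theta)=m_\al=1$ for every positive root $\be$. This property transfers verbatim to any subset of $\Phi^+$, in particular to $\Psi^+=\Psi\cap\Phi^+$, and since the elements of $I\cap H$ are exactly the positive roots of $\Phi$ lying in $H$, we get $I\cap H=\{\be\in\Psi^+\mid c_\al(\be)=1\}$.

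By the definition of abelian nilradical in the reducible setting, it suffices, after decomposing $\Psi$ into its irreducible components $\Psi_1,\dots,\Psi_k$ with simple systems $\Pi_j$ and highest roots $\theta_j=\sum_{\ga\in\Pi_j}n_\ga\ga$, to show that each $(I\cap H)\cap\Psi_j^+$ is either empty or principal, generated by some $\ga\in\Pi_j$ of multiplicity $1$ in $\theta_j$. For each $j$, applying $c_\al$ to the expansion of $\theta_j$ yields
$$c_\al(\theta_j)=\sum_{\ga\in\Pi_j}n_\ga\,c_\al(\ga),$$
where both $c_\al(\theta_j)$ and each $c_\al(\ga)$ lie in $\{0,1\}$ (since $\theta_j$ and $\ga$ are in $\Psi_j^+\subseteq\Phi^+$) and the $n_\ga$ are positive integers. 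This forces a clean dichotomy: either $c_\al\equiv 0$ on $\Pi_j$, whence $c_\al\equiv 0$ on $\Psi_j^+$ and $(I\cap H)\cap\Psi_j^+=\emptyset$, or there is a unique $\ga_j^*\in\Pi_j$ with $c_\al(\ga_j^*)=1$, with all other simples in $\Pi_j$ killed by $c_\al$ and $n_{\ga_j^*}=1$.

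In the non-trivial case, expanding $\be\in\Psi_j^+$ in $\Pi_j$ gives $c_\al(\be)=c_{\ga_j^*}^{(j)}(\be)$, where $c^{(j)}$ denotes the coefficient with respect to $\Pi_j$; hence
$$(I\cap H)\cap\Psi_j^+=\{\be\in\Psi_j^+\mid c_{\ga_j^*}^{(j)}(\be)=1\},$$
which by Proposition~\ref{simpleprincipal} applied inside the irreducible $\Psi_j$ is precisely the abelian nilradical of $\Psi_j^+$ generated by $\ga_j^*$. Combining the components yields that $I\cap H$ is an abelian nilradical of $\Psi^+$. I do not foresee any serious obstacle: the substance is the rigidity enforced by the $\{0,1\}$-valuedness of $c_\al$, and the only points requiring care are the bookkeeping distinction between the global simple system $\Pi$ and the local $\Pi_j$, together with the (immediate) verification that $\Psi_j^+=\Psi_j\cap\Phi^+$ really is the standard positive system of $\Psi_j$, so that the hypotheses of Proposition~\ref{simpleprincipal} are available for it.
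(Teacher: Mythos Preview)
Your proof is correct and follows essentially the same route as the paper's: both decompose $\Psi$ into irreducible components and use the $\{0,1\}$-valuedness of $c_\al$ on $\Phi^+$ to force, in each component $\Psi_j$, at most one simple root $\ga_j^*\in\Pi_j$ with $c_\al(\ga_j^*)=1$ and multiplicity $1$, so that $(I\cap H)\cap\Psi_j^+$ is the abelian nilradical generated by $\ga_j^*$. The only cosmetic difference is that the paper obtains the uniqueness of $\ga_j^*$ from the fact that the sum of the simple roots of $\Psi_j$ is itself a root (hence has $c_\al\leq 1$), whereas you read both uniqueness and the multiplicity-$1$ constraint off the expansion of the highest root $\theta_j$ in one stroke.
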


\begin{proof}
Let $I=(\al^\leq) $, with $\al\in \Pi$ and $m_\al=1$.  
Let $\Psi_1\dots, \Psi_k$ be the irreducible components of $\Psi$,  $\Pi_{\Psi_i}$ be the simple system of $\Psi_i^+$ for $i=i, \dots, k$, and let $\Pi_{\Psi}=\Pi_{\Psi_1}\cup \dots\cup\Pi_{\Psi_k}$, the simple system of 
$\Psi^+$.
Let $S_\al=\{\be\in \Pi_\Psi\mid c_\al(\be)=1\}$.
It is clear that if $\be\in S_\al\cap \Pi_{\Psi_i}$, then $\be$  has multiplicity $1$ in $\Psi_i^+$. 
Moreover, since the sum of all roots in a fixed $\Pi_{\Psi_i}$ is a root,  for all $i\in \{1, \dots, k\}$, $ S_\al\cap \Pi_{\Psi_i}$ contains at most one root.
Hence, either $S_\al\cap \Pi_{\Psi_i}=\emptyset$, in which case  $I\cap\Psi_i=\emptyset$, or $S_\al\cap \Pi_{\Psi_i}=\{\be_i\}$ for a certain root $\be_i$ with multiplicity $1$ in $\Psi_i^+$.  
Then, clearly,  $I\cap\Psi_i=(\be_i^\leq)$, hence $I\cap\Psi_i$ is an abelian nilradical of $\Psi_i^+$.
\end{proof}

\section{Crossing pairs}\label{crossingpairs}

In this section we analyze the properties of {\it crossing pairs} contained in abelian ideals. 
In the simply laced case, many of the results that we are proving could be proved in a very simpler way.

\begin{defi}\label{defcrossing}
Let $\be_i, \ga_i\in \Phi, \ i=1, 2$, with $\be_i\neq \ga_j$ for all $i, j\in \{1, 2\}$. 
We say that 
$\{\be_1, \be_2\}$ and  $\{\ga_1, \ga_2\}$  are {\it crossing pairs}
if $\be_1+\be_2=\ga_1+\ga_2$. In this case we call the equality $\be_1+\be_2=\ga_1+\ga_2$ {\it a crossing relation}. We do not assume that $\be_1\neq\be_2$ and 
$\ga_1\neq\ga_2$, hence (at most) one of the pairs $\{\be_1, \be_2\}$ and $\{\ga_1, \ga_2\}$ may be a multiset of a single root with multiplicity 2. 
\end{defi}

\begin{lem}\label{corte-abeliani}
Let $I$ be an abelian ideal in $\Phi^+$. 
\begin{enumerate} 
\item
If $\be\in I\cap \Phi_s$, $x\in \Phi$, and $\be+x\in \Phi$, then $x\in \Phi_s$.
\item
If $\be, \ga\in I$ and $\be-\ga\in\Phi$, then $(\be,\ga)> 0$.
\end{enumerate}
\end{lem}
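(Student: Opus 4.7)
For (2), I would run a quick root-string argument. The abelian property forces $\beta+\gamma\notin\Phi$, so in the $\gamma$-string $\{\beta+k\gamma:p\le k\le q\}\subseteq\Phi$ we must have $q=0$; meanwhile $\beta-\gamma\in\Phi$ forces $p\le -1$. The identity $p+q=-(\beta,\gamma^\vee)$ then yields $(\beta,\gamma^\vee)=-p\ge 1$, whence $(\beta,\gamma)\ge (\gamma,\gamma)/2>0$.

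For (1) I would proceed by contradiction and assume $x$ is long. Since $\beta+x\in\Phi$ with $|\beta|<|x|$, Lemma~\ref{esercizio}(3) gives $|\beta+x|=|\beta|$ and $(x,\beta^\vee)=-|x|^2/|\beta|^2\in\{-2,-3\}$. Examining the $\beta$-string through $x$: $p+q=-(x,\beta^\vee)\ge 2$, together with $q\ge 1$ (because $x+\beta\in\Phi$) and the automatic bound $p\le 0$, forces $q\ge 2$. Hence $x+2\beta\in\Phi$. Rewriting this as $\beta+(\beta+x)\in\Phi$ displays the two roots $\beta$ and $\beta+x$ as summable. Since $\beta+x\ge \beta$ in the standard partial order and $I$ is upward closed, $\beta+x\in I$ as well; so two elements of the abelian ideal $I$ are summable, contradicting abelianity. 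Therefore $x$ must be short.

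The main obstacle is producing the auxiliary third root $x+2\beta$ from the two roots one is given: this is precisely where the hypothesis ``$\beta$ short'' enters, via Lemma~\ref{esercizio}(3), which upgrades the bare relation $\beta+x\in\Phi$ to the sharper Cartan inequality $(x,\beta^\vee)\le -2$ and thereby makes the $\beta$-string through $x$ long enough to contain $x+2\beta$. In simply laced types the whole statement is vacuous, consistent with the section's opening remark, and (2) is essentially a warm-up providing the template for the string manipulations needed repeatedly in the sequel.
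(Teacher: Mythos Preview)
For part (1) your argument is essentially the paper's: assume $x$ long, invoke Lemma~\ref{esercizio}(3) to get $(x,\be^\vee)\le -2$, produce $x+2\be\in\Phi$ (you via the $\be$-string length, the paper via $s_\be(x)=x+\tfrac{|x|^2}{|\be|^2}\be\ge x+2\be$, which is the same mechanism), and then contradict abelianity by writing $x+2\be=\be+(\be+x)$ with both summands in $I$. Your step ``$\be+x\ge\be$, hence $\be+x\in I$'' tacitly needs $x\in\Phi^+$; the paper does the same, opening its contradiction with ``if $x\in\Phi_\ell^+$'' without further comment.

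For part (2) the routes differ. The paper reduces (2) to (1): if $(\be,\ga)\le 0$, the ``in particular'' clause of Lemma~\ref{esercizio}, applied to the summable pair $\be,-\ga$, forces $\be,\ga\in\Phi_s$ and $\be-\ga\in\Phi_\ell$; part (1), with $\ga\in I\cap\Phi_s$ and $x=\be-\ga$, then supplies the contradiction. Your root-string argument is more direct and self-contained: abelianity gives $q=0$, the hypothesis $\be-\ga\in\Phi$ gives $p\le -1$, and $(\be,\ga^\vee)=-p-q\ge 1$ drops out immediately, with no appeal to (1) or to the length classification. The paper's route keeps (1) and (2) in the same circle of ideas; yours is shorter and works uniformly regardless of lengths.
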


\begin{proof}
(1) By contradiction,  if $x\in\Phi_\ell^+$, then by Lemma ~\ref{esercizio}(3) $(\be,x)<0$, hence $s_\be(x)=x +\frac{|x|^2}{|\be|^2}\be \geq x +2\be$. 
It follows $x +2\be\in \Phi$, hence $x +2\be\in I$. 
This is impossible since $x+2\be=\be+(x+\be)$ and $I$ is abelian.
\par
(2) By Lemma ~\ref{esercizio}, if $(\be, \ga)\leq 0$, then $\be, \ga\in\Phi_s$ and $\be-\ga\in\Phi_\ell$: by part (1) this is impossible.
\end{proof}
\smallskip

\vbox{
\begin{pro}\label{crossing}
Let $I$ be an abelian ideal in $\Phi^+$ and $\{\be_1, \be_2\}$,  $\{\ga_1, \ga_2\}$ be crossing pairs contained in $I$ and such that $\be_1\ne \be_2$.
Then:
\begin{enumerate}
\item
for all $i,j\in \{1, 2\}$
$(\be_i, \ga_j)> 0$, in particular 
$\be_i-\ga_j$ is a root; 
\item 
either $\{\be_1, \be_2\}$, or $\{\ga_1, \ga_2\}$ is the pair of the minimum and maximum of \hbox{$\{\be_i, \ga_i\mid i=1,2\}$}.
\item $(\be_1, \be_2)=0$ unless both of $\be_1$, $\be_2$ are short and $\ga_1$, $\ga_2$ have different lengths;
\end{enumerate}
\end{pro}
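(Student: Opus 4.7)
The plan is to prove the three items in order, with part~(1) doing the main work and parts~(2) and~(3) drawing consequences.

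For part~(1), I apply Proposition~\ref{furbo} to each triple $(\be_1,\be_2,-\ga_k)$, $k=1,2$: the sum $\ga_{3-k}$ lies in $\Phi$, $\be_i\neq-\be_j$ is automatic from $\be_i\in\Phi^+$, and $\be_i\neq\ga_k$ is part of the definition of crossing pairs. Since $I$ is abelian we have $\be_1+\be_2\notin\Phi$, so the proposition forces the two remaining pairwise sums $\be_1-\ga_k$, $\be_2-\ga_k$ to be roots; letting $k$ vary gives $\be_i-\ga_j\in\Phi$ for all $i,j$, and Lemma~\ref{corte-abeliani}(2) then yields $(\be_i,\ga_j)>0$. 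Part~(2) is an immediate consequence: each $\be_i-\ga_j\in\Phi$ makes $\be_i,\ga_j$ comparable in the standard order, and the crossing identity $\be_1-\ga_j=\ga_{3-j}-\be_2$ propagates order inequalities. After swapping the two pairs if needed I may assume $\be_1>\ga_1$, which forces $\ga_2>\be_2$; splitting on the sign of $\be_1-\ga_2$ shows that either $\be_1$ is the maximum and $\be_2$ the minimum, or $\ga_2$ is the maximum and $\ga_1$ the minimum.

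For part~(3), first $(\be_1,\be_2)\ge 0$: a strictly negative value would make $\be_1,\be_2$ (distinct and not opposite, both in $\Phi^+$) summable, contradicting abelianness. Assume $(\be_1,\be_2)>0$, so $\be_1-\be_2\in\Phi$. Apply Lemma~\ref{corte-abeliani}(1) with $\be=\be_1$ and $x=-\be_2$: if $\be_1$ is short then $\be_2$ is short, and by symmetry either both are short or both are long. To exclude the ``both long'' case, the same lemma applied to a hypothetical short $\ga_j$ with $x=-\be_i$ (using $\be_i-\ga_j\in\Phi$ from~(1)) would force $\be_i$ short, so $\ga_1,\ga_2$ are also long. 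Distinct long roots with positive inner product have Cartan integer $1$, so (combined with the length identity $|\ga_1+\ga_2|^2=|\be_1+\be_2|^2$) every pair among $\{\be_1,\be_2,\ga_1,\ga_2\}$ has inner product $\tfrac12|\be_1|^2$. The identity
\[
\be_1-\be_2=(\be_1-\ga_1)+(\be_1-\ga_2)
\]
writes a long root as a sum of two long roots (their lengths pinned by Lemma~\ref{esercizio}), so Lemma~\ref{esercizio}(1) forces $(\be_1-\ga_1,\be_1-\ga_2)=-\tfrac12|\be_1|^2$, whereas direct expansion using the common pairwise inner products yields $+\tfrac12|\be_1|^2$, a contradiction. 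Hence $\be_1,\be_2$ are both short.

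Finally, suppose for contradiction $|\ga_1|=|\ga_2|$. If both $\ga_i$ are short, all four roots are short with each pairwise inner product equal to $\tfrac12|\be_1|^2$; pairing $\be_1$ with the equation $\be_1+\be_2=\ga_1+\ga_2$ gives $\tfrac32|\be_1|^2$ on one side and $|\be_1|^2$ on the other, a contradiction. If both $\ga_i$ are long, comparing $|\ga_1+\ga_2|^2$ with $|\be_1+\be_2|^2$ forces $(\ga_1,\ga_2)$ to a value yielding either a non-integer Cartan integer in types $B_n,C_n,F_4$ or Cartan $-1$ in $G_2$; in the latter Lemma~\ref{esercizio}(1) puts $\ga_1+\ga_2\in\Phi$, contradicting abelianness. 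The main obstacle is this last step of part~(3): the length arithmetic for each configuration of $\ga_1,\ga_2$ requires combining Lemma~\ref{esercizio} (to fix the lengths of differences of summable roots) with the positive-inner-product information from~(1) in a coordinated way, and identifying the right decomposition $\be_1-\be_2=(\be_1-\ga_1)+(\be_1-\ga_2)$ in the all-long subcase is the crux of the argument.
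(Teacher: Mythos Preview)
Your arguments for parts~(1) and~(2) are correct and essentially identical to the paper's: apply Proposition~\ref{furbo} to the triple $(\be_1,\be_2,-\ga_k)$, use abelianness to kill $\be_1+\be_2$, and then read off comparability and the min/max structure from the signs of the root differences.

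For part~(3) your argument is correct but takes a genuinely different route from the paper. The paper works directly with Cartan integers: writing $\be_2=\ga_2+x$ with $x=\ga_1-\be_1$, if $\be_1$ is long then $(\be_1^\vee,\ga_2)=1$ and $(\be_1^\vee,x)=-1$, so $(\be_1^\vee,\be_2)=0$ in two lines. When both $\be_i$ are short and $(\be_1^\vee,\be_2)=1$, the identity $\be_2=\be_1+x+y$ gives $(\be_1^\vee,x)+(\be_1^\vee,y)=-1$; since $x,y$ are short by Lemma~\ref{corte-abeliani}(1), one Cartan integer is $0$ and the other $-1$, and Lemma~\ref{esercizio} then forces exactly one of $\ga_1=\be_1+x$, $\ga_2=\be_1+y$ to be long. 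You instead assume $(\be_1,\be_2)>0$, use Lemma~\ref{corte-abeliani}(1) to match the lengths of $\be_1,\be_2$, and then exclude each unwanted length configuration by a separate norm computation, culminating in the decomposition $\be_1-\be_2=(\be_1-\ga_1)+(\be_1-\ga_2)$ for the all-long case and a type-by-type split ($L/s\in\{2,3\}$) for the ``both $\ga_i$ long'' case. Your approach is valid, and the decomposition trick is nice, but the paper's computation is shorter and type-independent: it never needs to distinguish $B_n/C_n/F_4$ from $G_2$. One small point worth making explicit in your write-up is that $\ga_1\neq\ga_2$ in each of your subcases (it follows immediately from the same length identity you are already using, but you invoke the Cartan integer of the pair $\ga_1,\ga_2$ without saying so).
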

}
\begin{proof}
(1) 
For $i\in \{1, 2\}$, $\be_1+\be_2-\ga_i\in \Phi$, and since $I$ is abelian, $\be_1+\be_2\not\in \Phi$.  
By Lemma ~\ref{furbo}, we obtain $\be_j-\ga_i\in \Phi$ for $j\in\{1, 2\}$.
By Lemma ~\ref{corte-abeliani}, it follows $(\be_j, \ga_i)> 0$ for $i,j\in \{1, 2\}$.
\par

(2) We set $x=\ga_1-\be_1=\be_2-\ga_2$ and $y=\ga_2-\be_1=\be_2-\ga_1$. 
By (1),  $x$ and $y$ are roots. 
If $x$ and $y$ are  both positive or both negative, we directly obtain that  $\{\be_1, \be_2\}$ is the set of  the minimum and maximum  of $\{\be_i,\ga_1\mid i=1,2\}$.
Similarly, if one of $x, y$ is positive and the other is negative,  $\{\ga_1, \ga_2\}$ is the set of  the minimum and maximum $\{\be_i,\ga_1\mid i=1,2\}$. 
In the picture below the proof we illustrate the Hasse diagram of the quadruple $\{\be_1, \be_2, \ga_1, \ga_2\}$ in the cases $x, y> \0$ and $x> \0, y< \0$. 
\par
(3)
We keep the notation of (2).
We first assume that at least one of $\be_1$, $\be_2$, is long.
Let  $\be_1$ be long. 
Then, by (1) and Lemma ~\ref{esercizio}, we have $(\be_1^\vee, \ga_2)=-(\be_1^\vee, x)=1$, hence $(\be_1^\vee, \be_2)=(\be_1^\vee, \ga_2+ x)=0$.
The case $\be_2$ long is similar, so we assume that both   $\be_1$ and  $\be_2$  are short and $(\be_1^\vee, \be_2)\neq 0$. 
Then,  $1=(\be_1^\vee, \be_2)=(\be_1^\vee, \be_1)+(\be_1^\vee, x)+(\be_1^\vee, y)=2+(\be_1^\vee, x)+(\be_1^\vee, y)$. By Lemma ~\ref{corte-abeliani}, $x$ and $y$ are short, hence one of $(\be_1^\vee, x)$ and $(\be_1^\vee, y)$ is $0$ and the other is $-1$. 
By Lemma ~\ref{esercizio}, this implies that one of $\ga_1$ and $\ga_2$ is long and the other is short. 
\end{proof}

$$
\begin{tikzpicture}[scale=1]
\draw
(0, 0) node(be2) {$\be_2$} 
(2,-1) node(ga2) {$\ga_2$}
(-1,-1) node(ga1){$\ga_1$}
(1,-2) node(be1) {$\be_1$}
(be2)
--node[pos=0.5, below]{$\ssty x$}
(ga2)
--node[pos=0.8, above]{$\ssty y$} 
(be1)
--node[pos=0.5, above]{$\ssty x$}
(ga1)
--node[pos=0.8, below]{$\ssty y$}
(be2);
\end{tikzpicture}
\qquad
\begin{tikzpicture}[scale=1]
\draw
(0, 0) node(ga1) {$\ga_1$} 
(2,-1) node(be1) {$\be_1$}
(-1,-1) node(be2){$\be_2$}
(1,-2) node(ga2) {$\ga_2$}
(be2)
--node[pos=0.5, above]{$\ssty x$}
(ga2)
--node[pos=0.2, above]{$\ssty -y$} 
(be1)
--node[pos=0.5, below]{$\ssty x$}
(ga1)
--node[pos=0.2, below]{$\ssty -y$}
(be2);
\end{tikzpicture}
$$

By the above result, we may define the relations below.

\smallskip
\begin{ntn}
We write $\be_1< \{\ga_1, \ga_2\}< \be_2$ for  $\be_1< \ga_i<\be_2$ for both $i\in \{1, 2\}$.
\end{ntn} 
\smallskip

\begin{defi}\label{lesssim}
We define the relations $\lesssim$ and $\sim$ on $\Phi^+$ as follows: 
\par
$\be_1\lesssim\be_2$
if and only if there exists $\ga_1, \ga_2\in \Phi^+$ such that 
 $\{\be_1, \be_2\}$ and $\{\ga_1, \ga_2\}$ are crossing pairs with $\be_1< \{\ga_1, \ga_2\}< \be_2$;
\par 
$\be_1\sim\be_2$
if and only if either $\be_1\lesssim \be_2$  or $\be_2\lesssim \be_1$.
\par
If  $\{\be_1, \be_2\}$ and $\{\ga_1, \ga_2\}$ are crossing pairs with $\be_1< \{\ga_1, \ga_2\}< \be_2$, we also say that $\{\ga_1, \ga_2\}$ is a {\it middle pair between} $\be_1$ and $\be_2$ and that $\{\be_1, \be_2\}$ is a {\it raising pair through} $\ga_1$ and $\ga_2$.
\end{defi}
\smallskip

Next corollary precises the order relation among different raising pairs through a common middle pair and  different middle pairs between a common raising pair. 
\begin{cor}\label{catene-crossing}
Let $I$ be an abelian ideal, $\{\be_1, \be_2\}$ and $\{\ga_1, \ga_2\}$ be crossing pairs in $I$ with $\be_1< \{\ga_1, \ga_2\}< \be_2$. 
\begin{enumerate}
\item
If $\{\be'_1, \be'_2\}$ is any other raising pair through $\{\ga_1, \ga_2\}$, then either $\be_1< \be'_1< \be'_2< \be_2$, or $\be'_1< \be_1< \be_2< \be'_2$. Moreover,  $\be_i-\be'_i\in \Phi$ for both $i=1, 2$.
\item
If $\{\ga'_1, \ga'_2\}$ is any other middle pair between $\{\be_1, \be_2\}$, then
$\ga_i-\ga'_j\in \Phi$ for all $i,j\in \{1, 2\}$. 
Moreover, one of the following four cases occur: $\ga'_i< \{\ga_1, \ga_2\}< \ga'_j$, $ \ga_i< \{\ga'_1, \ga'_2\}< \ga_j$ (with $\{i, j\}=\{1, 2\}$). 
In particular, there exists at most one incomparable middle pair between 
$\be_1$ and $\be_2$.
\end{enumerate}
\end{cor}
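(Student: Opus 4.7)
The plan is to reduce both parts of the corollary to direct applications of Proposition \ref{crossing}, exploiting the observation that a ``raising pair through $\{\ga_1,\ga_2\}$'' and a ``middle pair between $\{\be_1,\be_2\}$'' both amount to another crossing pair with the same sum as the given one. Sharing a sum is exactly the crossing-pair condition, so Proposition \ref{crossing} can be re-applied inside each part, to a fresh pair of crossing pairs.

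For Part (1), I would first observe that $\{\be_1,\be_2\}$ and $\{\be'_1,\be'_2\}$ are themselves crossing pairs in $I$, since both sum to $\ga_1+\ga_2$; also $\be_1\neq\be_2$ because $\be_1<\ga_1<\be_2$. Proposition \ref{crossing}(1) then gives $\be_i-\be'_j\in\Phi$ for all $i,j\in\{1,2\}$, which in particular delivers the ``moreover'' assertion $\be_i-\be'_i\in\Phi$. Proposition \ref{crossing}(2) says that one of the two pairs consists of the minimum and maximum of the four-element set $\{\be_1,\be_2,\be'_1,\be'_2\}$. Since $\be'_1<\ga_1<\be'_2$ forces $\be'_1\neq\be'_2$, we may relabel the primed pair so that $\be'_1<\be'_2$; the two cases of the min-max dichotomy then read as $\be_1<\be'_1<\be'_2<\be_2$ and $\be'_1<\be_1<\be_2<\be'_2$.

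For Part (2), I would apply the same argument to the crossing pairs $\{\ga_1,\ga_2\}$ and $\{\ga'_1,\ga'_2\}$. To invoke Proposition \ref{crossing}, at least one of the two pairs must have distinct elements; this is automatic because $\ga_1=\ga_2$ together with $\ga'_1=\ga'_2$ would force $2\ga_1=2\ga'_1$ and hence $\ga_1=\ga'_1$, collapsing the pairs into one. Proposition \ref{crossing}(1) again yields $\ga_i-\ga'_j\in\Phi$ for all $i,j$, and Proposition \ref{crossing}(2) yields the dichotomy where either $\{\ga_1,\ga_2\}$ or $\{\ga'_1,\ga'_2\}$ supplies the min and max of the four roots; writing this out with the two possible choices of which element is the min and which is the max produces exactly the four listed cases $\ga'_i<\{\ga_1,\ga_2\}<\ga'_j$ and $\ga_i<\{\ga'_1,\ga'_2\}<\ga_j$.

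For the closing ``at most one incomparable middle pair'' statement, the key point is that an incomparable pair can never itself be the pair of the minimum and maximum of a four-element set, since a min and a max are automatically comparable. Hence if $\{\ga_1,\ga_2\}$ and $\{\ga'_1,\ga'_2\}$ were both incomparable middle pairs, the dichotomy from Proposition \ref{crossing}(2) would be impossible. I do not expect any real obstacle in this proof: the only delicate points are the index bookkeeping needed to align $\be_i$ with $\be'_i$ in Part (1) and the small check that ``another'' pair is really distinct from the given one in Part (2); no length considerations (long/short roots) or further case analysis are needed beyond what Proposition \ref{crossing} already packages.
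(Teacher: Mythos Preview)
Your proposal is correct and follows exactly the approach of the paper, which simply observes that under (1) the pairs $\{\be_1,\be_2\}$ and $\{\be'_1,\be'_2\}$ are crossing pairs, under (2) the pairs $\{\ga_1,\ga_2\}$ and $\{\ga'_1,\ga'_2\}$ are crossing pairs, and then invokes Proposition~\ref{crossing}. Your write-up is in fact more detailed than the paper's, since you spell out the distinctness checks needed to legitimately apply Proposition~\ref{crossing} and the index bookkeeping behind the four cases and the uniqueness of an incomparable middle pair.
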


\begin{proof}
Under the assumption of (1), $\{\be'_1, \be'_2\}$ and $\{\be_1, \be_2\}$ are crossing pairs. Similarly, under the assumption of (2),  $\{\ga'_1, \ga'_2\}$ and $\{\ga_1, \ga_2\}$ are crossing pairs. Hence the claim  follows directly from Proposition ~\ref{crossing}.  
\end{proof}

In next Lemma, we see that the possible lengths of roots and root differences in a crossing pair are very limited.

\begin{lem}\label{lati-crossing}
Let $I$ be an abelian ideal in $\Phi^+$, $\{\be_1, \be_2\}$,  $\{\ga_1, \ga_2\}$ be crossing pairs contained in $I$, $\be_1 < \{\ga_1,\ga_2\} < \be_2$, 
$x=\be_2-\ga_1$, and $y=\be_2-\ga_2$.  
\begin{enumerate}
\item
If $x$ is long, then also $y$, $\be_1$, $\be_2$, $\ga_1$, $\ga_2$ are long. 
\item
If any of $x$, $y$, $\be_1$, $\be_2$, $\ga_1$, $\ga_2$ is short,  then $x$ and $y$ are short and at most one of $\be_1$, $\be_2$, $\ga_1$, $\ga_2$ is long, except when $\ga_1=\ga_2$, in which case $\ga_1$ is short and $\be_1, \be_2$ are long. 
\end{enumerate}
\end{lem}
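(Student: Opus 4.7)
The plan is to prove (1) by propagating the ``long'' constraint through the four identities $\be_2 = \ga_1 + x = \ga_2 + y$ and $\ga_i = \be_1 + z$ (with $z = x$ or $y$), using Lemmas \ref{corte-abeliani}(1) and \ref{esercizio}. Then (2) will follow by combining the contrapositive of (1) with length-propagation via Lemma \ref{esercizio}(3) and the orthogonality relations of Proposition \ref{crossing}(3).

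For (1), assume $x$ is long. Note $x \in \Phi^+$ since $\ga_1 < \be_2$. From $\ga_1 + x = \be_2 \in \Phi$ with $\ga_1 \in I$: if $\ga_1$ were short, Lemma \ref{corte-abeliani}(1) would force $x$ short; hence $\ga_1$ is long, and analogously (from $\be_1 + x = \ga_2$) $\be_1$ is long. Now $\ga_2 = \be_1 + x$ is a sum of two long roots of equal length, and since no root in an irreducible crystallographic root system strictly exceeds the long length, Lemma \ref{esercizio}(2) forces $\ga_2$ long; similarly $\be_2 = \ga_1 + x$ is long. Finally, $y = \be_2 - \ga_2$ is long: from $\ga_2 + y = \be_2$ with both $\ga_2,\be_2$ long, a short $y$ would give the contradiction $|\be_2| = |y|$ via Lemma \ref{esercizio}(3).

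For (2), if some root is short, the contrapositive of (1) gives $x$ short, and the symmetry $\ga_1 \leftrightarrow \ga_2$ (which swaps $x$ with $y$) yields $y$ short as well. Each identity $\ga_j = \be_i + z$ with $z \in \{x,y\}$ short now yields, via Lemma \ref{esercizio}(3): \emph{$\be_i$ long forces $\ga_j$ short}, and \emph{$\ga_j$ long forces $\be_i$ short}. Hence the long roots among $\{\be_1,\be_2,\ga_1,\ga_2\}$ lie in a single crossing pair. Suppose two of them are long. If they lie in $\{\ga_1,\ga_2\}$, Proposition \ref{crossing}(3) gives $(\be_1,\be_2) = 0$ (the exception fails since $\ga_1,\ga_2$ have the same length), and its role-swapped version (when $\ga_1 \ne \ga_2$) gives $(\ga_1,\ga_2) = 0$; then $|\be_1+\be_2|^2 = |\ga_1+\ga_2|^2$ becomes $2s^2 = 2L^2$ or $2s^2 = 4L^2$ (the latter when $\ga_1=\ga_2$), both impossible. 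If instead the two long roots lie in $\{\be_1,\be_2\}$ with $\ga_1 \ne \ga_2$, the analogous norm computation gives $2L^2 = 2s^2$, again impossible.

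Only the configuration $\be_1,\be_2$ long with $\ga_1 = \ga_2$ remains. Writing $\ga = \ga_1 = \ga_2$ and $\be_1 = \ga - x$, $\be_2 = \ga + x$: if $\ga$ were long, the same norm computation yields $4L^2 = |2\ga|^2 = |\be_1+\be_2|^2 = 2s^2$, impossible; so $\ga$ is short. Since $\ga \ne \pm x$, the Cartan integer $(\ga, x^\vee) \in \{-1, 0, 1\}$: the value $0$ yields $\be_1,\be_2$ both long via Lemma \ref{esercizio}(2) (occurring only in ratio-$2$ systems), while $\pm 1$ would force one of $\be_1,\be_2$ short and the other long, a configuration confined to ratio-$3$ systems, i.e.\ $G_2$. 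The main obstacle is excluding this $G_2$ subcase in an abelian ideal: a direct inspection of the $\ga$-string through the short $\be_i$ in $G_2$ shows that $\ga + \be_i$ is a (long) root, contradicting the abelianness of $I$ since $\ga, \be_i \in I$.
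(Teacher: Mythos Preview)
Your argument is correct, and the proof is essentially complete by the end of your third paragraph: once you have shown that two long roots force $\be_1,\be_2$ long with $\ga_1=\ga_2$, the already-established implication ``$\be_i$ long forces $\ga_j$ short'' gives $\ga$ short, and the lemma is proved. The final paragraph is superfluous. In particular, there is no $G_2$ subcase to exclude: under the standing assumption that $\be_1,\be_2$ are \emph{both} long, the value $(\ga,x^\vee)=\pm 1$ already yields a contradiction (one $\be_i$ comes out short), so the abelianness argument is unnecessary. Note also a slip in that paragraph: since $\be_1,\be_2$ are long and orthogonal, $|\be_1+\be_2|^2=2L^2$, not $2s^2$; the conclusion that $\ga$ long is impossible still follows (from $4L^2\neq 2L^2$), but the displayed identity is wrong.

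Your route in part~(2) differs from the paper's. You deduce $(\be_1,\be_2)=0$ and $(\ga_1,\ga_2)=0$ from Proposition~\ref{crossing}(3) and its role-swapped version, then compare $|\be_1+\be_2|^2$ with $|\ga_1+\ga_2|^2$. The paper instead works directly with Cartan integers: when $\be_1,\be_2$ are both long and $x,y$ short, Lemma~\ref{esercizio} gives $(\ga_j,x)=(\ga_j,y)=0$, hence $(\ga_2,\ga_1^\vee)=(\ga_1+x-y,\ga_1^\vee)=2$, forcing $\ga_1=\ga_2$; and conversely $-\be_{i'}=s_{\ga_1}(\be_i)$ shows $|\be_{i'}|=|\be_i|$. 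Your approach has the advantage of reusing a result already on record; the paper's is self-contained and also yields the converse direction (one $\be_i$ long together with $\ga_1=\ga_2$ forces the other $\be_{i'}$ long) without any further case distinction.
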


\begin{proof}
We first prove that if one of $x$, $y$ is short, then the other is short, too.
Assume, for example, $x\in\Phi_s$. If $\be_1\in\Phi_s$, then $y\in\Phi_s$ by Lemma ~\ref{corte-abeliani}, hence let $\be_1\in\Phi_\ell$. In this case, by Lemma ~\ref{esercizio}, $\be_1+x=\ga_2\in\Phi_s$, whence $y\in\Phi_s$  by Lemma ~\ref{corte-abeliani}.
\par
Hence, $x, y$ are either both short, or both long.
In order to prove (1), it remains to check that if $x, y\in \Phi_\ell$, then 
$\be_i, \ga_j\in \Phi_\ell$ for $i=1, 2$. 
This follows directly from Lemma ~\ref{corte-abeliani} for $\be_1$, $\ga_1$, and $\ga_2$. 
For $\be_2$ it follows from Lemma ~\ref{esercizio}, since $\be_2=\ga_1+x$.
\par
It remains to conclude the proof of (2). 
By (1), if any of $x$, $y$, $\be_1$, $\be_2$, $\ga_1$, $\ga_2$ belongs to $\Phi_s$, then $x, y\in \Phi_s$.  
In this case, assume $\be_i\in \Phi_\ell$ for a certain $i\in \{1, 2\}$, and let $\{i'\}=\{1, 2\}\smeno\{i \}$. 
For $j\in \{1,2\}$,  $\be_i-\gamma_j\in \{\pm x, \pm y\}$, hence  by Lemma ~\ref{esercizio},  $\ga_j\in \Phi_s$. 
If also $\be_{i'}\in \Phi_\ell$ then, by Lemma ~\ref{esercizio}, $(\ga_i, x)=(\ga_i, y)=0$ for both $i=1,2$, hence $(\ga_2, \ga_1^\vee)=(\ga_1 +x-y, \ga_1^\vee)=2$, which implies $\ga_1=\ga_2$, since $|\ga_1|=|\ga_2|$.  
Conversely,  if $\ga_1=\ga_2$, then we get $-\be_{i'}=\be_i-2\ga_1=s_{\ga_1}(\be_i)$, where $s_{\ga_1}$ is the reflection with respect to $\ga_1$, hence 
$|\be_{i'}|=|\be_i|$. 
\par
By a similar argument, taking into account that $\be_1\neq \be_2$, we obtain that if one of $\ga_1$, $\ga_2$ is long, all the remaining roots in the crossing pairs are short.
\end{proof}

In next proposition, we prove that, for comparable roots $\be_1$ and $\be_2$ in an abelian ideal $I$, if $\be_1-\be_2$ is  not a root, then $\be_1\sim \be_2$. 
Moreover, we analyze when the reverse implication holds. 
In particular, we see that this happens when $\be_1$ and $\be_2$ are long roots, hence in the simply laced case, i.e., in this case we have $\be_1\sim \be_2$ if and only if $\be_1-\be_2\not\in \Phi_\ell$. 

\begin{pro}\label{minori-crossing}
Let $I$ be an abelian ideal in $\Phi^+$ and $\be_1, \be_2\in I$. 
\begin{enumerate}
\item 
If $\be_1<\be_2$ and $\be_2-\be_1\not\in\Phi$, then $\be_1\lesssim\be_2$.
\item 
If $\be_1\lesssim\be_2$, $\{\be_1, \be_2\}\subseteq\Phi_s$ and there exists a middle pair $\{\ga_1, \ga_2\}$ between $\be_1, \be_2$ such that $\ga_1\in\Phi_s$ and $\ga_2\in\Phi_\ell$, then $\be_2-\be_1\in\Phi_s$.
\item
If $\be_1\lesssim\be_2$, then  $\be_2-\be_1\not\in\Phi$ if and only if either of the following conditions is satisfied:
\begin{itemize}
\item[(a)] at least one of $\be_1$, $\be_2$ is long,
\item[(b)] $\{\be_1, \be_2\}\subseteq\Phi_s$ and there exists  a middle pair $\{\ga_1, \ga_2\}\subseteq\Phi_s$ between $\be_1, \be_2$.
\end{itemize}
\end{enumerate}
\end{pro}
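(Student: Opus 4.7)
The plan is to handle the three parts in sequence, with the scalar-product computation in (2) serving as the technical engine for the harder direction of (3).

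For (1), I would exhibit the middle pair via Proposition~\ref{furbo} combined with an induction on the length $k$ of a cover chain $\be_1=\de_0<\de_1<\cdots<\de_k=\be_2$ in the root poset, where each $\al_i:=\de_i-\de_{i-1}$ is simple and all $\de_i$ lie in $I$ by upward closure; the hypothesis $\be_2-\be_1\notin\Phi$ forces $k\ge 2$. For $k=2$, Proposition~\ref{furbo} applied to the triple $\be_1,\al_1,\al_2$ (whose sum $\be_2$ is a root, with no opposite pair) forces $\be_1+\al_1=\de_1$ and $\be_1+\al_2$ to both be roots, because the remaining sum $\al_1+\al_2=\be_2-\be_1$ is excluded by hypothesis; the pair $\{\de_1,\be_1+\al_2\}$ is then the desired middle pair. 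For $k\ge 3$, either $\de_{k-1}-\be_1\in\Phi$ and the same Proposition~\ref{furbo} argument applied to $\be_1,\de_{k-1}-\be_1,\al_k$ produces the middle pair $\{\de_{k-1},\be_1+\al_k\}$ directly, or $\de_{k-1}-\be_1\notin\Phi$ and the inductive hypothesis furnishes a middle pair $\{\ga'_1,\ga'_2\}$ between $\be_1$ and $\de_{k-1}$; in the latter case Proposition~\ref{furbo} applied to $\ga'_1,\de_{k-1}-\ga'_1,\al_k$ (all three in $\Phi^+$, with $\de_{k-1}-\ga'_1\in\Phi$ by Proposition~\ref{crossing}(1) and sum $\be_2\in\Phi$) forces at least one of $\be_2-\ga'_1$ or $\ga'_1+\al_k$ to be a root, and either one provides the extra element needed to assemble a middle pair between $\be_1$ and $\be_2$.

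For (2), I set $x=\be_2-\ga_1=\ga_2-\be_1$ and $y=\be_2-\ga_2=\ga_1-\be_1$, so that both are short by Lemma~\ref{lati-crossing}(2). From the summable triples $\be_1+y=\ga_1$ (all short) and $\be_1+x=\ga_2$ (two shorts summing to a long), Lemma~\ref{esercizio}(1)--(2) gives $(\be_1,y)=-|\be_1|^2/2$, and $(\be_1,x)=0$ in the ratio-$2$ case or $(\be_1,x)=|\be_1|^2/2$ in the ratio-$3$ case. Expanding $(\be_1,\be_2)=(\be_1,\ga_1+\ga_2)-|\be_1|^2$ then yields $|\be_2-\be_1|^2=|\be_1|^2$ in ratio~$2$ and $|\be_2-\be_1|^2=0$ in ratio~$3$. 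The second possibility would force $\be_1=\be_2$, contradicting $\be_1<\be_2$, so the $\mathrm{G}_2$ case is ruled out; in ratio~$2$ the same computation gives $(\be_2,\be_1^\vee)=1$, whence $s_{\be_1}(\be_2)=\be_2-\be_1$ is a root, and by its length it is short.

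For (3), the direction $(\Rightarrow)$ is an immediate corollary of (2): if neither (a) nor (b) holds, then $\be_1,\be_2$ are both short and every middle pair contains a long root, and Lemma~\ref{lati-crossing}(2) forces each such middle pair to consist of exactly one long and one short, so (2) gives $\be_2-\be_1\in\Phi_s\subseteq\Phi$, a contradiction. For $(\Leftarrow)$, Proposition~\ref{crossing}(3) yields $(\be_1,\be_2)=0$ in both cases (a) and (b), so $|\be_2-\be_1|^2=|\be_1|^2+|\be_2|^2$; a short ratio-by-ratio check shows that this value is not a squared root length in any case except the ratio-$2$ subcase of (b), where it coincides with the squared long length. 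In that residual subcase, if $\be_2-\be_1\in\Phi_\ell$, then $s_{\be_1}(\be_2-\be_1)=\be_2-\be_1+2\be_1=\be_1+\be_2$ would be a root, contradicting the abelianness of $I$. The main obstacle is thus the scalar-product computation of part (2); once it is in hand, (3) follows with very little extra work, reusing the same reflection trick to rescue the delicate ratio-$2$ subcase of (b), while the inductive assembly in (1) is routine modulo the right application of Proposition~\ref{furbo}.
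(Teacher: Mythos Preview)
Your argument is correct, but both parts (1) and the $(\Leftarrow)$ direction of (3) are organized differently from the paper. For (1), the paper does not induct on the length of a saturated chain; instead it takes a \emph{minimal} decomposition $\be_2-\be_1=\eta_1+\cdots+\eta_k$ into positive roots, first observes $(\be_1,\be_2)=0$ from abelianness and the hypothesis, then peels off $\eta$'s from $\be_2$ and uses Proposition~\ref{furbo} to show that \emph{every} rearrangement of the $\eta_i$ gives a chain of roots, so that $\{\be_1+\eta_1,\be_2-\eta_1\}$ is already a middle pair. Your cover-chain induction is a legitimate alternative and arguably more transparent, but in the inductive step, subcase~(ii) when $\be_2-\ga'_1\in\Phi$, you have not actually produced the middle pair: knowing $\be_2-\ga'_1\in\Phi$ and $\ga'_1-\be_1\in\Phi$ is not enough by itself; you need one further application of Proposition~\ref{furbo} to $\be_1,\ \ga'_1-\be_1,\ \be_2-\ga'_1$ (sum $\be_2$, with $\be_2-\be_1\notin\Phi$ excluded) to conclude that $\be_1+\be_2-\ga'_1\in\Phi$, after which $\{\ga'_1,\be_1+\be_2-\ga'_1\}$ is the desired middle pair. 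This is a small omission, easily fixed.

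For (3)$(\Leftarrow)$ the paper proceeds more directly: in each of cases (a) and (b) it computes $(\be_1,\be_2^\vee)=0$ via the identity $(\be_2,\be_2^\vee)=(\be_1+x+y,\be_2^\vee)$ and Lemma~\ref{esercizio}, then invokes Lemma~\ref{corte-abeliani}(2) (which forces $(\be_1,\be_2)>0$ whenever $\be_2-\be_1\in\Phi$ inside an abelian ideal) to conclude $\be_2-\be_1\notin\Phi$. Your route via $(\be_1,\be_2)=0$ from Proposition~\ref{crossing}(3), followed by the squared-length tabulation and the reflection $s_{\be_1}$ in the residual ratio-$2$ subcase, is a valid alternative; it trades the appeal to Lemma~\ref{corte-abeliani}(2) for a short length check and the abelianness contradiction $\be_1+\be_2\in\Phi$. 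Your treatment of (2) is essentially the paper's computation, with the added virtue that you explicitly rule out ratio~$3$ rather than leaving it implicit.
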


\begin{proof}
(1) Let $\be_1< \be_2$ and $\be_2-\be_1\not\in\Phi$. By definition, $\be_2-\be_1$ is a sum of positive roots. Let 
$$
k=\min\{h\in \nat \mid \exists\, \eta_1,\dots, \eta_h\in \Phi^+ \text{ such that }  \be_2-\be_1=\eta_1+\cdots+\eta_h\},
$$ 
and $\eta_1,\dots, \eta_k\in \Phi^+$ be such that $\be_2=\be_1+\eta_1+\cdots+\eta_k$. By assumption, $k\geq 2$ and no sum $\sum_{j=1}^h\eta_{i_j}$ with $1\leq i_j\leq k$ and $h> 1$  is a root.
Clearly, at least one among $(\be_2, \be_1)$, $(\be_2, \eta_i)$ with $1\leq i \leq k$, must be strictly positive. 
Now $\be_2-\be_1\not\in \Phi$ by assumption, and also $\be_1+\be_2\not \in\Phi$, since $I$ is abelian, hence $(\be_1, \be_2)=0$.  
Therefore $(\be_2, \eta_i)> 0$ for some $\in\{1, \dots, k\}$. 
We may assume $(\be_2, \eta_k)> 0$, so that $\be_2-\eta_k=\be_1+\eta_1+\cdots \eta_{k-1}\in \Phi$.
Let $\ga_i=\be_1+\sum\limits_{1\leq j\leq i}\eta_j$: iterating the above argument, we may assume $\ga_i\in \Phi$ for all $i\in \{0, \dots, k\}$. 
Since $\eta_i+\eta_j\not\in \Phi$, for $1\leq i, j\leq k$,  by Proposition ~\ref{furbo} applied to any sum $\ga_i+\eta_{i+1}+\eta_{i+2}$, we get that both  $\ga_i+\eta_{i+1}$ and $\ga_i+\eta_{i+2}$ belong to $\Phi$, for $0\leq i\leq k-2$. 
It follows easily that, for any rearrangement $\eta'_1,\dots, \eta'_k$ of $\eta_1,\dots, \eta_k$, $\ga'_i=\be_1+\sum\limits_{1\leq j\leq i}\eta'_j$ is a root, for $0\leq i\leq k$,  . 
In particular, $\be_1+\eta_1$ and $\be_2-\eta_1$ are roots, both different from $\be_1$ and $\be_2$, hence $\be_1+\be_2=(\be_1+\eta_1)+(\be_2-\eta_1)$ is a crossing relation.
\par
(2)  Let $\be_1, \be_2, \ga_1\in \Phi_s$ and  $\ga_2\in \Phi_\ell$,   $\be_1+\be_2=\ga_1+\ga_2$, $\be_1< \{\ga_1, \ga_2\}< \be_2$, 
$x=\be_2-\ga_1=\ga_2-\be_1$, and $y=\be_2-\ga_2=\ga_1-\be_1$.
Then, by Lemmas ~\ref{corte-abeliani}(2) and ~\ref{esercizio}, we obtain $(\be_2, \be_1^\vee)=(\ga_2+y, \be_1^\vee)=2
+(y, \be_1^\vee)\geq 1$ and hence $(\be_2, \be_1^\vee)=1$ and $\be_2-\be_1\in \Phi_s$.
\par
(3)
Let $\be_1, \be_2, \ga_1, \ga_2\in \Phi^+$ be such that $\be_1+\be_2=\ga_1+\ga_2$, $\be_1< \{\ga_1, \ga_2\}< \be_2$, 
$x=\be_2-\ga_1$, and $y=\be_2-\ga_2$. 
By Lemma ~\ref{lati-crossing}(2), if neither (a) nor (b) hold, then we are in the case of  item (2), hence $\be_1-\be_2\in \Phi$.
It remains to prove the converse. \par
Let $\be_2\in \Phi_\ell$.
Then by Lemma ~\ref{esercizio} $(x, \be_2^\vee)=(y, \be_2^\vee)=1$, hence
$$
2=(\be_2, \be_2^\vee)=(\be_1+x+y, \be_2^\vee)=(\be_1, \be_2^\vee)+2.
\leqno({*)}
$$
It follows $(\be_1, \be_2^\vee)=0$, and $\be_2-\be_1\not\in \Phi$.
The case $\be_1\in \Phi_\ell$ is similar.
\par
If $\be_1,\be_2,\ga_1, \ga_2\in\Phi_s$, then by Lemma ~\ref{corte-abeliani} also $x, y\in \Phi_s$, hence equalities  ($*$) still hold and  we can argue as  above.
\end{proof}
\smallskip

\begin{defi}\label{reduced}
For any  $S\subseteq\Phi^+$,  
we say that $S$ is reduced if, for all $\be, \be'\in S$, $\be\nosim \be'$. 
\par
For all $\be\in \Phi^+$ we set 
$$
\red(\be)=\{\be'\in \Phi^+\mid \be\neq \be' \text{ and } \be\not\sim \be'\}, \qquad  \red(\be)^\leq=\red(\be)\cap (\be^\leq).
$$
\end{defi}
\smallskip

\begin{rem}
By Proposition ~\ref{minori-crossing} and Lemma ~\ref{corte-abeliani}, 
$$\red(\be)^\leq \subseteq \{\eta\in (\be^\leq)\mid \eta-\be\in \Phi^+\}= \{\eta\in (\be^\leq)\mid (\eta,\be)> 0\}.$$
Moreover, in the simply laced case, the inclusion is an equality.
In general, the inclusion is proper. 
As an example, in type $\mathrm C_n$, if we number the simple roots as in \cite{Bou}, and take $\be_1=\al_n+\al_{n-1}, \be_2=\al_n+2\al_{n-1}+\al_{n-2}, \ga_1=\al_n+2\al_{n-1}, \ga_2=\al_n+\al_{n-1}+\al_{n-2}$, we have: $\be_1+\be_2=\ga_1+\ga_2$, hence $\be_1\lesssim \be_2$, but $\be_2-\be_2=\al_{n-1}+\al_{n-2}\in \Phi_s$.
\end{rem}

\section{Triangulation orders}\label{section-triang-order} 

In this section we define some special orderings of abelian ideals, which we call triangulation orders, and prove that all facet ideals have a triangulation order.
Throughout the section, let $I$ be an abelian ideal of $\Phi^+$ such that $\rk(I)=n$.

\begin{defi}\label{bipartite}
Let $J \subseteq I$.
We say that $J$ is bipartite if it has an initial section  $J_{\mathrm i}$, and a final section $J_{\mathrm f}$ such that 
\begin{enumerate}
\item $J=J_{\mathrm i}\cup J_{\mathrm f}$; 
\item for all $\be_1\in J_{\mathrm i}\smeno J_{\mathrm f}$  and $\be_2\in J_{\mathrm f}\smeno J_{\mathrm i}$, we have  $\be_1\lesssim \be_2$;
\item there exists a hyperplane $H$ in $\mathrm E$ such that $J_{\mathrm i}\cap J_{\mathrm f}\subseteq H$ and $H$ strictly separates  $J_{\mathrm i}\smeno J_{\mathrm f}$  from $J_{\mathrm i}\smeno J_{\mathrm f}$.
\end{enumerate}
If the above conditions hold, we say that $\{J_{\mathrm i},J_{\mathrm f}\}$ is a  bipartition of $J$.
If, moreover, $J_{\mathrm i}$ and  $J_{\mathrm f}$ are both proper subsets of $J$, we say that
$\{J_{\mathrm i},J_{\mathrm f}\}$ is a proper bipartition.  
A hyperplane $H$ as in (3) is called a separating hyperplane, for the bipartition  $\{J_{\mathrm i},J_{\mathrm f}\}$ of $J$.
\end{defi}

Note that, by definition, if $J$ has a proper bipartition, then  it has at least two elements. 
If $J$ is also saturated, it has at least three.
From the definition it also follows directly that if  $\{J_{\mathrm i}, J_{\mathrm f}\}$ is a  bipartition of $J$, then $J_{\mathrm i}\smeno J_{\mathrm f}$ and $J_{\mathrm f}\smeno J_{\mathrm i}$ are an initial and a final section of $J$. 
Indeed, for example,  if $\be_1\in J_{\mathrm i}\smeno J_{\mathrm f}$ and $\be\in J\smeno(J_{\mathrm i}\smeno J_{\mathrm f})$ we have $\be\not \leq \be_1$, because $J\smeno(J_{\mathrm i}\smeno J_{\mathrm f})=J_{\mathrm f}$ and $J_{\mathrm f}$ is a final section. 
Moreover, it is clear that  $(J_{\mathrm i}\smeno J_{\mathrm f}) \leq (J_{\mathrm f}\smeno J_{\mathrm i})$.
Finally, we note that if $J$ is saturated, also $J_{\mathrm i}$ and  $J_{\mathrm f}$ are saturated. 

\smallskip

\begin{defi} \label{restricted-sim}
For each subset  $S$ of $\Phi^+$, we define the restricted relations $\lesssim_S$ and $\sim_S$ on $S$ as follows.
For all $\be_1, \be_2\in S$: (1) $\be_1 \lesssim_S \be_2$ if and only if there exists a middle pair $\{\ga_1, \ga_2\}$ between $\be_1$ and $\be_2$ contained in $S$; 
(2) $\be_1 \sim_S \be_2$  and only if either $\be_1 \lesssim_S \be_2$, or $\be_2 \lesssim_S \be_1$.
We say that $S$ is  $\sim$closed if, for all $\be_1, \be_2\in S\cap \Psi^+$,   $\be_1 \lesssim\be_2$ implies   $\be_1 \lesssim_S  \be_2$. 
\end{defi}

It is clear that, for any $S\subseteq \Phi^+$, the relation $\be_1 \lesssim_S \be_2$ implies $\be_1 \lesssim \be_2$, while the converse, in general, does not hold. 
Hence, if $S$ is $\sim$closed, for all  $\be_1, \be_2\in S$ we have $\be_1\sim\be_2$ if and only if $\be_1\sim_S\be_2$.

The first of following lemmas is clear, hence we omit the proof.

\begin{lem}\label{saturated-simclosed}
Let $S\subseteq \Phi^+$. 
If $S$ is saturated, then $S$ is $\sim$closed.
\end{lem}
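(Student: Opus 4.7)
The plan is to unwind the two definitions and observe that the witnesses provided by the relation $\lesssim$ are forced into $S$ by saturation. First I would fix $\be_1, \be_2 \in S$ with $\be_1 \lesssim \be_2$ and invoke Definition \ref{lesssim} to produce $\ga_1, \ga_2 \in \Phi^+$ such that $\be_1 + \be_2 = \ga_1 + \ga_2$ and $\be_1 < \ga_i < \be_2$ for $i = 1, 2$.

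Next I would use the hypothesis that $S$ is saturated. Since $\be_1, \be_2 \in S$ and each $\ga_i$ satisfies $\be_1 \leq \ga_i \leq \be_2$, each $\ga_i$ lies in the interval $[\be_1, \be_2]$, which is contained in $S$ by definition of saturation. Hence $\{\ga_1, \ga_2\} \subseteq S$, and this is exactly a middle pair between $\be_1$ and $\be_2$ lying in $S$, so $\be_1 \lesssim_S \be_2$ by Definition \ref{restricted-sim}(1). This gives the $\sim$closedness of $S$.

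There is no real obstacle: the statement is essentially a definition-chase, with the only point to check being that $\ga_1, \ga_2$ are positive roots (which follows from $\be_1 \leq \ga_i$ with $\be_1 \in \Phi^+$) and that the interval $[\be_1, \be_2]$ is actually what saturation is defined with respect to. Both are immediate from the preliminaries in Subsection \ref{basic}.
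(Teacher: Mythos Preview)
Your proof is correct and is exactly the definition-chase the paper has in mind; in fact the paper simply declares the lemma ``clear'' and omits the proof entirely, so your argument is the natural spelling-out of that omission.
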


\begin{lem}\label{simclosed-components}
Let I be an abelian ideal in $\Phi$, $\Psi$ a root subsystem of $\Phi$, and $\Psi_1, \dots, \Psi_k$ be the irreducible components of $\Psi$.
If $I\cap \Psi$ is $\sim$closed, then for all $\be_1, \be_2\in$  \hbox{$I\cap \Psi$}, $\be_1\lesssim \be_2$ if and only if there exists $i\in \{1, \dots, k\}$ such that $\be_1, \be_2\in \Psi_i$ and \hbox{$\be_1\lesssim_{I\cap\Psi_i} \be_2$}.
\end{lem}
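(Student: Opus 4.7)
The plan is to handle the two directions separately, as the reverse direction is essentially immediate while the substance lies in the forward one.

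\textbf{Forward direction.} Assume $\be_1, \be_2 \in I \cap \Psi$ satisfy $\be_1 \lesssim \be_2$. The $\sim$closedness of $I \cap \Psi$ immediately produces a middle pair $\{\ga_1, \ga_2\}$ between $\be_1$ and $\be_2$ contained entirely in $I \cap \Psi$. The heart of the argument is then to show that all four roots $\be_1, \be_2, \ga_1, \ga_2$ lie in a single irreducible component $\Psi_i$, at which point the same witnessing pair gives $\be_1 \lesssim_{I \cap \Psi_i} \be_2$.

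\textbf{Common-component step.} To pin down $\Psi_i$, I would decompose the crossing identity $\be_1 + \be_2 = \ga_1 + \ga_2$ along the pairwise orthogonal subspaces $\gen(\Psi_1), \dots, \gen(\Psi_k)$. For each index $j$, project the relation onto $\gen(\Psi_j)$; since each root in $\Psi$ belongs to exactly one component, the resulting equation in $\gen(\Psi_j)$ equates two sums, each with zero, one, or two terms drawn from the respective pair. A short case analysis eliminates every mixed configuration: a projection yielding a one-term identity $\be_1 = \ga_m$ contradicts the defining condition $\be_i \neq \ga_j$ of a crossing pair (Definition~\ref{defcrossing}); a projection yielding $\be_1 + \be_2 = \ga_m$ or $\be_m = \ga_1 + \ga_2$ forces $\be_1 + \be_2 \in \Phi$ or $\ga_1 + \ga_2 \in \Phi$, both of which contradict the abelianness of $I$; and any configuration on some $\Psi_j$ with two terms on one side and none on the other forces a sum of two positive roots to vanish, which is impossible. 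The only surviving alternative is that both pairs are supported on a single common component $\Psi_i$.

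\textbf{Reverse direction and obstacle.} Conversely, if $\be_1, \be_2 \in \Psi_i$ and $\be_1 \lesssim_{I \cap \Psi_i} \be_2$, the witnessing middle pair lies in $I \cap \Psi_i \subseteq \Phi^+$ and is therefore a middle pair in the ambient sense, so $\be_1 \lesssim \be_2$ follows directly from Definition~\ref{lesssim}. The main obstacle, modest in nature, is the common-component case analysis; the only inputs it requires are the abelianness of $I$, the condition $\be_i \neq \ga_j$ from Definition~\ref{defcrossing}, and the orthogonality of the ambient spans of the components of $\Psi$.
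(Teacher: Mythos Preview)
Your argument is correct, but it takes a different route from the paper's. The paper's proof is essentially one line: after extracting a middle pair $\{\ga_1,\ga_2\}\subseteq I\cap\Psi$ from $\sim$closedness (as you also do), it invokes Proposition~\ref{crossing}(1), which gives $(\be_i,\ga_j)>0$ for all $i,j$. Since distinct irreducible components of $\Psi$ are orthogonal, these strict positivity conditions force all four roots into a single component $\Psi_i$, and the proof is done.

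Your approach avoids Proposition~\ref{crossing} entirely and instead projects the linear relation $\be_1+\be_2=\ga_1+\ga_2$ onto the orthogonal summands $\gen(\Psi_j)$, then eliminates mixed configurations by a short case analysis using only abelianness of $I$ and the condition $\be_i\neq\ga_j$. This is more elementary and self-contained, but longer; the paper's route is cleaner precisely because the hard work of analyzing crossing pairs in abelian ideals has already been packaged into Proposition~\ref{crossing}. Either way the content is the same: all four roots must land in one component.
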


\begin{proof}
Let $\be_1, \be_2\in I\cap\Psi$ and $\be_1\lesssim \be_2$. 
If $I\cap \Psi$ is $\sim$closed,  there exists a middle pair $\{\ga_1, \ga_2\}\subseteq  I\cap \Psi$, between $\be_1$ and $\be_2$. 
By Proposition \ref{crossing}, $(\be_i,\ga_j)>0$  for all $i,j\in \{1, 2\}$, hence all of $\be_i$ and $\ga_i$ belong to the same irreducible component of $\Psi$.
\end{proof}

\begin{lem}\label{lemma-simclosed}
Let $I$ be an abelian nilradical of $\Phi^+$, $\Psi$ a parabolic subsystem of $\Phi$, and $\Pi_\Psi$ the simple system of $\Psi^+$.
Assume that:  (1) $\Pi_\Psi\smeno I\subseteq \Pi$; (2) each maximal root in $I\cap \Psi$ is comparable with at most one root in $\Pi_\Psi\cap I$.  
Then $I\cap \Psi$ is saturated,  hence $\sim$closed. 
\end{lem}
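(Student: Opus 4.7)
The plan is to prove saturation of $I\cap\Psi$ directly and then invoke Lemma~\ref{saturated-simclosed} for the ``$\sim$closed'' half. So I would fix $\be_1,\be_2\in I\cap\Psi$ with $\be_1\leq\be_2$ and a root $\ga$ with $\be_1\leq\ga\leq\be_2$; since $I$ is a $\Phi^+$-filter, $\ga\in I$ is automatic, and the real task is to show $\ga\in\Psi$.

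By Lemma~\ref{induzione}, $I\cap\Psi$ is an abelian nilradical of $\Psi^+$, so if $\Psi=\Psi_1\cup\cdots\cup\Psi_k$ is the decomposition into irreducible components, each $I\cap\Psi_i$ is either empty or the principal upper $\leq_{\Psi_i}$-ideal generated by a $\Psi_i$-simple root $\be_i\in\Pi_{\Psi_i}\cap I$ of multiplicity~$1$ in $\Psi_i^+$; moreover $\Pi_\Psi\cap I=\{\be_i:I\cap\Psi_i\neq\emptyset\}$, because distinct $\Psi_i$-simple roots are pairwise $\leq_{\Psi_i}$-incomparable. The first substantive step is to rule out cross-component pairs: if $\be_1\in\Psi_i$ and $\be_2\in\Psi_j$ with $i\neq j$, then writing $\theta_{\Psi_j}$ for the $\Psi_j$-highest root (which is the maximum of $I\cap\Psi_j$), the chain $\be_i\leq_\Phi\be_1\leq_\Phi\be_2\leq_\Phi\theta_{\Psi_j}$ together with $\be_j\leq_\Phi\theta_{\Psi_j}$ yields $\be_i,\be_j\leq_\Phi\theta_{\Psi_j}$. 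Picking any $\leq_\Phi$-maximal element of $I\cap\Psi$ above $\theta_{\Psi_j}$ inherits this double comparability and contradicts~(2).

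Once $\be_1,\be_2\in I\cap\Psi_i$, condition~(1) gives $\Pi_{\Psi_i}=(\Pi_{\Psi_i}\cap\Pi)\cup\{\be_i\}$. Every $\be\in I\cap\Psi_i$ has $\be_i$-coefficient exactly~$1$ in its $\Psi_i$-simple expansion (multiplicity~$1$ combined with $\be\geq_{\Psi_i}\be_i$), so $\be=\be_i+\sum_{\al'\in\Pi_{\Psi_i}\cap\Pi}d_{\al'}(\be)\al'$ with $d_{\al'}(\be)\in\nat$. Re-expanding in the $\Pi$-basis of $\mathrm E$, for every $\al\in\Pi\smeno\Pi_{\Psi_i}$ the coefficient $c_\al(\be)$ equals the fixed value $c_\al(\be_i)$. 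The sandwich $\be_1\leq_\Phi\ga\leq_\Phi\be_2$ then pins $c_\al(\ga)=c_\al(\be_i)$ for every such $\al$, forcing $\ga-\be_i$ into the $\ganz$-span of $\Pi_{\Psi_i}\cap\Pi$; hence $\ga\in\gen(\Pi_{\Psi_i})\cap\Phi=\Psi_i\subseteq\Psi$, which closes the saturation argument.

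The delicate step I expect is confirming the cross-component inequalities such as $\be_i\leq_\Phi\be_1$ and $\be_2\leq_\Phi\theta_{\Psi_j}$ really hold in the \emph{$\Phi$-order} and not only in the component orders; this reduces to the observation that $\Pi_{\Psi_j}\subseteq\Phi^+$ forces $L^+(\Psi_j)\subseteq L^+(\Phi)$. With that in hand the rest is coefficient bookkeeping driven by conditions~(1) and~(2), and the ``$\sim$closed'' conclusion comes for free from Lemma~\ref{saturated-simclosed}.
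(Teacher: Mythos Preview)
Your proposal is correct and follows essentially the same route as the paper's proof: both decompose $I\cap\Psi$ into abelian nilradicals of the irreducible components via Lemma~\ref{induzione}, use condition~(2) to rule out cross-component comparabilities, use condition~(1) to see that differences within a component lie in $\gen_\ganz(\Pi_{\Psi_i}\cap\Pi)$, invoke that $\Psi$ is parabolic to place the intermediate root $\ga$ in $\Psi_i$, and conclude $\sim$closedness from Lemma~\ref{saturated-simclosed}. The only cosmetic difference is that the paper compares $\be_1$ and $\be_2$ directly (showing $\be_1-\be_2\in L^+(\Psi_1)$ once $\be_1-\be_2\in L^+(\Phi)$), whereas you compare both to the component generator $\be_i$; and your cross-component step is slightly more explicit in that you pass to a $\leq_\Phi$-maximal element above $\theta_{\Psi_j}$ rather than asserting outright that the highest roots are the $\leq_\Phi$-maximal elements of $I\cap\Psi$.
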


\begin{proof}
As seen in the proof of Lemma \ref{induzione}, $I\cap \Psi=\bigcup\limits_{\be\in \Pi_\Psi\cap I}(\be^\preq)$. 
Moreover, different elements in  $\Pi_\Psi\cap I$ belong to different irreducible components of $\Psi$. 

\par
The maximal elements in $I\cap \Psi$ are clearly the highest roots of the irreducible components of $\Psi$ that have nonempty intersection with $I$, hence, condition (2) implies that, for any pair of irreducible components $\Psi_1$ and $\Psi_2$ of $\Psi$, $I\cap\Psi_1$ and $I\cap\Psi_2$ are element-wise  incomparable with respect to the standard partial order $\leq$ of $\Phi$.
Therefore, $I\cap \Psi$ is saturated  if and only if, for each irreducible component $\Psi_1$ of $\Psi$, $I\cap \Psi_1$ is  saturated.
\par
Let $\Psi_1$  be a fixed irreducible component of $\Psi$ such that $I\cap \Psi_1\neq \emptyset$, $\Pi_1=\Pi_\Psi\cap \Psi_1$, and $\be_1, \be_2\in \Psi_1$.
Then, $\be_1-\be_2$ is a $\ganz$-linear combination of roots in $\Pi_1\smeno I$. 
By condition (1), this is contained in $\Pi$, which is $\ganz$-basis of $L(\Phi)$, hence, if $\be_1-\be_2\in L^+(\Phi)$, we obtain that $\be_1-\be_2\in L^+(\Psi_1)$. 
In this case, since $\Psi$, and hence $\Psi_1$, is parabolic, we have also that all roots between $\be_1$ and $\be_2$ belong to $\Psi_1$, hence to  $I\cap\Psi_1$. 
\par
This proves that $I\cap \Psi$ is saturated. 
By Lemma \ref{saturated-simclosed}, it is also $\sim$closed.
\end{proof}

\begin{defi}\label{detachable}
Let $J\subseteq I$, and $\be\in J$.
We say that $\be$ is a detachable element in $J$ if the following conditions hold:
\begin{enumerate}
\item $\be$ is an extremal element of $J$ with respect to the standard partial order; 
\item there exists a hyperplane $H$ such that:
\begin{itemize}
\item[(a)]  $\red(\be)\cap J=J\cap H$ and $H$ strictly separates $\be$ from $J\smeno(\{\be\}\cup \red(\be))$;
\item[(b)]
$I\cap H $ is  $\sim$closed. 
\end{itemize}
We call such a hyperplane $H$ a detaching hyperplane for $\be$ in $J$.
\end{enumerate}
\end{defi}

\begin{rem}\label{rem-detachable}
Let $\be$ be detachable in  $J$, $H$ be a detaching hyperplane, and $J_\be=$ \hbox{$\{\be\}\cup (J\cap H)$}. 
Then,  $J_\be=\{\be\}\cup (J\cap \red(\be))$ and  it is immediate from Definition ~\ref{bipartite} that $\{J_\be, J\smeno\{\be\}\}$ is a bipartition of $J$. 
\end{rem}

\begin{lem}\label{lem-detachable}
Let $\be\in I\cap \Phi_\ell$. 
Then there exist a hyperplane $H$ such that $I\cap H = \red(\be)$, $H$ strictly  separates $\be$ from $I\smeno(\red(\be)\cup \{\be\})$, and $I\cap H$ is $\sim$closed.
In particular, for all $J\subseteq I$ such that $\be=\min J$ or $\be=\max J$,  $\be$ is detachable in  $J$ and $H$ is a detaching hyperplane for $\be$ in $J$. 
\end{lem}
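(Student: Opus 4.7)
The plan is to take $H=\{x\in\mathrm E\mid(x,\be^\vee)=1\}$; since $\be$ is long, $(\be,\be^\vee)=2$, hence $\be\notin H$ and $\be$ sits strictly on the side $(\cdot,\be^\vee)>1$. The argument hinges on every element of $I\smeno\{\be\}$ being comparable with $\be$, so I reduce to the case $\be=\min I$ (the case $\be=\max I$ being dual via the order involution of Proposition~\ref{face-inv}); the ``in particular'' clause for arbitrary $J$ with $\be$ extremal in $J$ will then follow by restriction.

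First I would show $(\be',\be^\vee)\in\{0,1\}$ for every $\be'\in I\smeno\{\be\}$: the lower bound comes from abelianity of $I$ (a strictly negative Cartan integer would extend the $\be$-string through $\be'$ upward and force $\be+\be'\in\Phi$), while the upper bound is the standard Cartan inequality for a long root $\be$ and $\be'\ne\pm\be$. The key step is the identification $I\cap H=\red(\be)\cap I$: the reflection $s_\be(\be')=\be'-(\be',\be^\vee)\be$ gives $(\be',\be^\vee)=1\Leftrightarrow\be'-\be\in\Phi$, and Proposition~\ref{minori-crossing} applied to the comparable pair $\be<\be'$ yields $\be\not\sim\be'\Leftrightarrow\be'-\be\in\Phi$: the forward direction is part~(3) with condition~(a) (available because $\be$ is long), the converse is part~(1). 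Strict separation of $\be$ (value $2$) from the remaining elements of $I\smeno\{\be\}$ (value $0$) is then automatic.

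For $\sim$closedness of $I\cap H$: given $\be_1,\be_2\in I\cap H$ with a middle pair $\{\ga_1,\ga_2\}$, the filter property places $\ga_i\in I$, and $\ga_i>\be_1>\be$ ensures $\ga_i\ne\be$; summing the crossing relation gives $(\ga_1,\be^\vee)+(\ga_2,\be^\vee)=2$ with each summand in $\{0,1\}$ by the first step, so both equal $1$, placing $\ga_i$ in $I\cap H$. The ``in particular'' clause then follows directly from Definition~\ref{detachable}: for $J\subseteq I$ with $\be$ extremal in $J$, the identity on $I$ restricts to $J\cap H=J\cap\red(\be)$, the separation is inherited, and $I\cap H$ is $\sim$closed as just shown. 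The main obstacle is the identification $I\cap H=\red(\be)\cap I$, which requires both directions of Proposition~\ref{minori-crossing} and genuinely uses the extremality of $\be$ in $I$ so that every other element of $I$ is comparable with $\be$.
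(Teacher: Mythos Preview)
Your approach is essentially the same as the paper's; both hinge on the observation that for $\gamma\in I\smeno\{\be\}$ one has $(\gamma,\be^\vee)\in\{0,1\}$, with value $1$ exactly when $\be-\gamma\in\Phi$, and then Proposition~\ref{minori-crossing} converts this into the $\sim$ condition for comparable pairs. Two points deserve comment.

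First, the paper takes $H=(m_{\al_I}\be^\vee-\check\omega_{\al_I})^\perp$, a \emph{linear} hyperplane, whereas your $H=\{x:(x,\be^\vee)=1\}$ is affine. On $I$ the two coincide (since $(\check\omega_{\al_I},\gamma)=m_{\al_I}$ for every $\gamma\in I$), so for the lemma itself your choice is adequate. But the downstream arguments (Lemma~\ref{induzione}, Propositions~\ref{reduced-independent} and~\ref{intersection}) treat $\Phi\cap H$ as a root subsystem and speak of its irreducible components; for that the linear version is required.

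Second, and more seriously, your reduction to $\be=\min I$ does not yield the ``in particular'' clause as stated. That clause concerns arbitrary $J\subseteq I$ with $\be$ extremal in $J$, and $\be$ need not be extremal in $I$: for instance in type $\mathrm D_{n,n}$ the proof of Proposition~\ref{triang-ord-exist} invokes the lemma with $\be=\al_n+\al_{n-2}$, which is the minimum of $(\be^\preq)=I\smeno\{\al_n\}$ but not of $I$. Your ``by restriction'' step presupposes the identity $I\cap H=I\cap\red(\be)$, which you have only established under the extra hypothesis $\be\in\{\min I,\max I\}$. The fix is to argue the two pieces separately: for $J$ with $\be$ extremal in $J$, every $\gamma\in J\smeno\{\be\}$ is comparable with $\be$, so $J\cap H=J\cap\red(\be)$ and the separation follow exactly as in your second paragraph; the $\sim$closedness of $I\cap H$ is a statement about $I$ alone and is obtained (as both you and the paper do) from the crossing relation $(\ga_1,\be^\vee)+(\ga_2,\be^\vee)=2$, which does not require $\be$ to be extremal in $I$.
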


\begin{proof}
Let $\al_I$ be the (unique) simple root such that $I=\{\ga\in \Phi\mid c_{\al_I}(\ga)=m_{\al_I}\}$. 
By Proposition ~\ref{minori-crossing}, for all $\ga\in J\smeno\{\be\}$ we have $\be\not\sim\ga$ if and only if $\be-\ga\in \Phi$. 
Since $\be\in \Phi_\ell$,  this condition is equivalent to $(\be^\vee, \ga)=1$. 
Recall that $\check\omega_{\al_I}$ is the fundamental coweight such that $(\al_I, \check\omega_{\al_I})=1$, and let $\nu=m_{\al_I}\be^\vee-\check\omega_{\al_I}$, $H=\nu^\perp$.
Then, $(\nu, \be)=m_{\al_I}$, and $(\nu, \ga)=0$ for all $\ga$ in $J$ such that $(\be^\vee, \ga)=1$.  
Since $I$ is abelian,  for all other $\ga\in I\smeno\{\be\}$ we have $(\be^\vee, \ga)=0$, hence $(\nu, \ga)=-m_{\al_I}$.
Thus we have proved that $I\cap H = \red(\be)$, and $H$ strictly  separates $\be$ from $I\smeno(\red(\be)\cup \{\be\})$
\par
It remains to prove that $I\cap H$ is $\sim$closed. 
Let $\be_1, \be_2\in \Phi^+\cap H$, $\be_1\sim\be_2$, and $\{\ga_1,\ga_2\}$ be a middle pair between $\be_1$ and $\be_2$.
Then $(\ga_1+\ga_2, \be^\vee)=(\be_1+\be_2, \be^\vee)=2$. 
Since $\be$ is long, this forces $(\ga_1, \be^\vee)=(\ga_2, \be^\vee)=1$, hence
$\{\ga_1,\ga_2\}\subseteq I \cap H$, and $\be_1\sim_{I\cap H}\be_2$, as claimed.
\end{proof}

\begin{defi}\label{triang-order}
Let 
$\preccurlyeq$ be a total order relation on $I$,  
$$
S_{I,\preq}=\{\be\in I\mid \rk(\be^\preq)=n\}.
$$ 
We say that $\preq$ is {\em a triangulation order} if the following conditions hold:
\begin{enumerate}
\item
$I\cap \gen(I\smeno S_{I,\preq})$ is saturated;
\item
for each $\be\in S_{I,\preq}$,  $(\be^\preq)$ is saturated and either  of the following conditions holds:
\begin{itemize}
\item[(a)] $\be$ is detachable in  $(\be^\preq)$,  
\item[(b)] $(\be^\preq)$ has a bipartition $\{J_{\mathrm i}, J_{\mathrm f}\}$ such that,  for both $J=J_{\mathrm i}$ and $J_{\mathrm f}$, $\be$ is detachable in $J$, and there exist a detaching hyperplane $H_J$, for $\be$ in $J$, such that $(\be^\preq) \cap H_J\subseteq \red(\be)$.
\end{itemize}
\end{enumerate}
\end{defi}

\begin{rem}\label{ini-sec}
\begin{enumerate}
\item
It is clear that, for any total ordering $\preq$  on $I$, the subset $S_{I,\preq}$
is an initial section of the ordered set $(I, \preq)$.
Moreover, $\rk(I\smeno S_{I,\preq})< n$.
\item 
It may happen that $I\smeno S_{I,\preq}$ be properly contained in $I\cap \gen(I\smeno S_{I,\preq})$. 
In fact, this happens for a triangulation order that we will construct  for type $\mathrm E_{7}$.
\item
The above definition does not contain any condition on the restriction of $\preq$ to $I\smeno S_{I,\preq}$. 
Hence, if $\preq$ is a triangulation order, any other total order $\preq'$ such that  $S_{I,\preq'}=S_{I,\preq}$, and  $\preq$ and $\preq'$ coincide on the initial section $S_{I,\preq}$, is a triangulation order, too.
\end{enumerate}
\end{rem}

We will prove the existence of triangulation orders for all facet ideals.  
The proof requires a case by case analysis. 
By Proposition \ref{face-ab-nil}, we may restrict the analysis to the abelian nilradicals.

\begin{defi}
We say that the facet ideal $I$ of $\Phi^+$  is an abelian nilradical of type $\mathrm X_{n,k}$, and we write $I\cong \mathrm X_{n,k}$, if there exists an irreducible root subsystem $\Psi$ of $\Phi$ and a positive system $\wt \Psi^+$ of $\Psi$ such that
$I$ is an abelian nilradical in $\wt \Psi^+$ and: 
\begin{enumerate}
\item 
$\Psi$ is of type $\mathrm X_n$; 
\item 
if $\{\al'_1, \dots, \al'_n\}$ is a simple system of $\wt \Psi^+$, numbered according to Bourbaki's conventions \cite{Bou}, then $I$ is the principal ideal generated by $\al'_k$ in $\wt \Psi^+$.
\end{enumerate}
\end{defi}

It is implicit in the definition that the above $\al'_k$ has multiplicity $1$ in $\Psi$. 

\par
We note that the type of a facet ideal may be not unique, if the root system $\Psi$ has nontrivial Dynkin diagram automorphisms. 
We identify the types $\mathrm X_{n,k}$ and $\mathrm X_{n,k'}$ if there exists a diagram automorphism that maps $\al_k$ into $\al_{k'}$.   
By a direct inspection of the highest root in all root types, we see that the possible types of abelian nilradicals type,  in an irreducible root system of rank $n$, are the following:  
$$
\mathrm A_{n, k}\ \text{ for } k=1,\dots, n, \quad \mathrm B_{n,1},\quad \mathrm C_{n, n},\quad \mathrm D_{n,k}\ \text{ for } k=1, n-1, n,\quad \mathrm E_{6,1},\quad  \mathrm E_{6,6}, \quad \mathrm E_{7, 7}.
$$
Among them, we have  the identifications: $\mathrm A_{n, k}=\mathrm A_{n, k'}$
for $k+k'=n+1$; $\mathrm D_{n,n-1}=\mathrm D_{n,n}$ for all $n\geq 4$ and  $\mathrm D_{n,1}=\mathrm D_{n,n-1}=\mathrm D_{n,n}$ for $n=4$; $\mathrm E_{6,1}=\mathrm E_{6,6}$.

\par
By  Proposition \ref{face-ab-nil},  the facet ideals that are not abelian nilradicals of $\Phi^+$ are in any case abelian nilradicals of some type. 
Their type  $\mathrm X_{n,k}$ is explicitly obtained as follows.

\par
Let $\al_i$ be a leaf in the extended Dynkin diagram of $\Phi$, so that $I_{\mathrm F_{\al_i}}$ is a facet ideal of $\Phi^+$. 
By Corollary \ref{cor-teo-facce}, the Dynkin diagram obtained by removing $\al_i$ from the extended Dynkin diagram of $\Phi$, gives the root type $\mathrm X_n$. 
The position of $-\theta$ in this diagram gives the index $k$ of the abelian nilradical type $\mathrm X_{n,k}$. 
Below, we write the resulting type for the facet ideals that are not abelian nilradicals of $\Phi^+$ itself. 
If the root type of $\Phi$ is  $\mathrm Y_n$, we write $I_F(\mathrm Y_n, \al_i)$ in place of $I_{F_{\al_i}}$.
\begin{align*}
&I_F(\mathrm B_n,\al_n)\cong \mathrm D_{n,n},\qquad 
I_F(\mathrm F_4,\al_4)\cong \mathrm B_{4,1},\qquad 
I_F(\mathrm E_7,\al_2) \cong \mathrm A_{7,1},
\\ 
&I_F({\mathrm E_8},\al_1)\cong \mathrm D_{8,1},\qquad I_F(\mathrm E_8,\al_2) \cong \mathrm A_{8,1}.
\end{align*}

\par
In proving next proposition, we will consider, case by case, the seven possible distinct sporadic or classes of abelian nilradical types.  
The main points of the proof are illustrated in Figures 1-9. 
We first give some explanation of these figures. 
We may arrange the roots of any facet ideal $I$ in a matrix $(\be_{i,j})$, in such a way that adjacent entries differ by a simple root. 
The label $i$ on a certain edge means that  the difference between its vertexes is the simple root $\al_i$. 
We choose the matrix arrangement of roots so that the standard partial order is compatible with the reverse lexicographic order of row and column indexes, starting from $\be_{1,1}=\theta$.
In this way, the matrix yields a Hasse diagram of $I$ in which  the order ascends toward northwest. 
We note that this condition do not determine a unique possibility. 
The figures illustrate  the proof on such Hasse diagrams for all the abelian nilradicals.

{\begin{pro}\label{triang-ord-exist}
Each facet ideal has a triangulation order.
\end{pro}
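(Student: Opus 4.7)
The plan is to argue by first reducing to a finite classification of cases, then constructing the triangulation order explicitly on each. By Proposition \ref{face-ab-nil}, every facet ideal of $\Phi^+$ is an abelian nilradical in some irreducible root subsystem $\Psi$ of $\Phi$; since Definitions \ref{bipartite}--\ref{triang-order} depend only on the root geometry of $I$ inside its ambient irreducible system, it suffices to verify the statement when $I=(\al^\leq)$ is an abelian nilradical of $\Phi^+$ itself. Inspecting the highest roots leaves the seven families listed before the proposition: $\mathrm A_{n,k}$, $\mathrm B_{n,1}$, $\mathrm C_{n,n}$, $\mathrm D_{n,k}$ ($k\in\{1,n-1,n\}$), $\mathrm E_{6,1}$, $\mathrm E_{7,7}$.

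For each type, I would exploit the matrix presentation $(\be_{i,j})$ already set up by the author, in which adjacent entries differ by a simple root and the reverse lexicographic order on indices refines $\leq$, with $\be_{1,1}=\theta$. My candidate triangulation order $\preq$ is a natural linear extension of this matrix order, scanning the rows (or columns) in the direction opposite to $\leq$, and refining within a row by one of the two ``diagonal'' directions. With this choice, for any $\be\in I$ the upper cone $(\be^\preq)$ is a sub-matrix, which makes it automatically saturated, and $S_{I,\preq}$ is cut off at the first $\be$ whose sub-matrix fails to span $\mathrm E$. In the simply laced families $\mathrm A_{n,k}$, $\mathrm D_{n,k}$, $\mathrm E_{6,1}$, $\mathrm E_{7,7}$, every root is long and $\be$ is the $\preq$-minimum (hence extremal) of $(\be^\preq)$, so Lemma \ref{lem-detachable} immediately supplies a detaching hyperplane and option (a) of Definition \ref{triang-order}(2) is satisfied. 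The saturation of $I\cap \gen(I\smeno S_{I,\preq})$ is then checked by identifying $\gen(I\smeno S_{I,\preq})$ with a coordinate hyperplane (cutting off the last row or column of the matrix) and invoking Lemma \ref{induzione} together with Lemma \ref{lemma-simclosed}.

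The main obstacle is the two non-simply-laced families $\mathrm B_{n,1}$ and $\mathrm C_{n,n}$, where short roots force some $\be\in S_{I,\preq}$ to fail option (a). Here I expect the proper bipartition of option (b) to be produced by separating $(\be^\preq)$ along the hyperplane $H_J=\{x\mid c_{\al}(x)=\text{const}\}$ for the short simple root $\al$ distinguishing the two halves of the ladder; $\sim$-closedness of $I\cap H_J$ will follow from Lemma \ref{lemma-simclosed}, and the sandwich condition $(\be^\preq)\cap H_J\subseteq\red(\be)$ reduces, after Proposition \ref{minori-crossing}(3), to checking root-length combinations that are easily tabulated from the matrix. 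A secondary delicate point, flagged in Remark \ref{ini-sec}(2), is the case $\mathrm E_{7,7}$, where $I\smeno S_{I,\preq}$ is strictly smaller than $I\cap\gen(I\smeno S_{I,\preq})$; verifying condition (1) there will require reading the Hasse diagram directly (one of the figures referenced by the author) rather than by an inductive argument. Once all seven cases are dispatched, the proposition follows.
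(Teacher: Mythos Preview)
Your reduction to abelian nilradicals and the case list are correct, but the heart of the argument has a genuine gap. You claim that in the simply-laced families ``$\be$ is the $\preq$-minimum (hence extremal) of $(\be^\preq)$, so Lemma~\ref{lem-detachable} immediately supplies a detaching hyperplane.'' This conflates two different orders. That $\be$ is the $\preq$-minimum of $(\be^\preq)$ is a tautology; what Lemma~\ref{lem-detachable} actually requires is $\be=\min(\be^\preq)$ or $\be=\max(\be^\preq)$ in the \emph{standard} partial order~$\leq$. With a row-scan or lexicographic linear extension, once the first root has been removed the next $\be$ is typically only a $\leq$-minimal element of its $\preq$-upper cone, not the $\leq$-minimum, because the complement acquires several incomparable $\leq$-extremes. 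In that situation Lemma~\ref{lem-detachable} is silent and detachability must be established by hand.

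This is precisely why the paper's proof is long. In type~$\mathrm A_{n,k}$ the chosen $\be=\al_{[k,j]}$ with $j>k$ is neither $\min$ nor $\max$ of $(\be^\preq)$, and the author builds the hyperplane $(\check\omega_k-\check\omega_{k-1}-\check\omega_{j+1})^\perp$ and checks both conditions of Definition~\ref{detachable} explicitly. You also have the location of clause~(b) backwards: type~$\mathrm B_{n,1}$ needs only $S_I=(\al_1,\theta)$, both handled by Lemma~\ref{lem-detachable}; type~$\mathrm C_{n,n}$ is done entirely via option~(a) with explicit hyperplanes despite the short roots; it is the \emph{simply-laced} types $\mathrm D_{n,n}$ (for $j\le n-3$) and $\mathrm E_{7,7}$ that force option~(b), because there the relevant $\be$ is merely $\leq$-extremal and no single hyperplane cuts out $\red(\be)\cap(\be^\preq)$. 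The paper's actual device is to take $S_I$ as a carefully interleaved sequence alternating current $\leq$-minima and $\leq$-maxima (so that Lemma~\ref{lem-detachable} applies as far as possible), and then to supply explicit hyperplanes or bipartitions for the residual steps. Your proposed row-scan does not arrange this, and the blanket appeal to Lemma~\ref{lem-detachable} for the simply-laced cases is unjustified.
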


\begin{proof}
By the above discussion, we may assume that $I$ is an abelian nilradical of $\Phi^+$. 
\par
By Remark ~\ref{ini-sec}, it suffices to define a subset $S_{I,\preq}$  of $I$ and a partial order $\preq$ on $I$ that is total on $S_{I,\preq}$ and has $S_{I,\preq}$ as an initial section, in such a way that conditions (1) and (2) of Definition \ref{triang-order} are satisfied.
We also require $\rk(I\smeno S_{I,\preq})=n-1$, and $\rk(\be^\preq)=n$ for each $\be\in  S_{I,\preq}$, in order that $S_{I,\preq}=\{\be\in I\mid \rk(\be^\preq)=n\}$.
\par
Henceforward, we write $S_I$ in place of $S_{I,\preq}$ and we intend that $S_I$ is an initial section of $\preq$.
In all cases, we define the restriction $(S_I, \preq)$ as a sequence $(\be_1, \dots, \be_k)$ such that, for $i=1, \dots, k$,  $\be_i$ is an extremal element in $I\smeno \{\be_j\mid j< i\}$, with respect to the standard partial order. 
This ensures that $(\be_i^\preq)$ is saturated.
Therefore, in order to prove condition (2), it will remain to prove that either condition (a),  or (b) holds for all $\be_i$.
\par
If $\be_i$ is long and  $\be_i=\min(\be_i^\preq)$, or $\be_i=\max(\be_i^\preq)$ (with respect to the standard partial order), then $\be_i$ is detachable in  $(\be_i^\preq)$ by Lemma ~\ref{lem-detachable}, and we have nothing to prove.
In the remaining cases, we will directly prove that (a) or (b) holds.
\par

Finally, since  we take $\be_i$ extremal in $(\be_i^\preq)$  by construction, in order to prove that  $\be_i$ is detachable in  $(\be_i^\preq)$, or in a subset of its, it will suffice to check condition (2) in Definition~\ref{detachable}. 
\par

Now we can give the details of the proof for each abelian nilradical.
Throughout the rest of the proof, we use the following notation: for $h, k\in \{1, \dots, n\}$, $\omega_h=\check\omega_{\al_h}$,  $\al_{[h,k]}=\sum\limits_{h\leq i\leq k}\al_i$; for  $S\subseteq \{1, \dots, n\}$,  $\al_{S}=\sum\limits_{i\in S}\al_i$.
\smallskip

\nl 
{\bf A.} $I\cong \mathrm A_{n, k}$, $\left[\frac{n}{2}\right]< k\leq n$. 
We define $(S_I, \preq) =\(\al_{[k,j]}| j=k, \dots, n\)$.
It is easily seen that  $I\smeno S_I$ is the type $\mathrm A_{n-1, k-1}$ abelian nilradical generated by $\al_k+\al_{k-1}$ in the root subsystem that has
$\{\al_{k-1}+\al_k\}\cup \Pi\smeno\{\al_{k-1},\al_k\}$ as a simple system. 
This implies that $\rk(I\smeno S_I)=n-1$ and, by Lemma \ref{lemma-simclosed}, that $I\cap \gen (I\smeno S_I)$ is $\sim$closed.
\par 
For all $\be\in S_I$, $\rk(\be^\preq)=n$ and $\be$ is detachable in  $(\be^\preq)$. 
Indeed, for $\be=\al_{[k,j]}$, with $j\in[k,n]$, let  $H=(\check\omega_k-\check\omega_{k-1}-\check\omega_{j+1})^\perp$ (where $\check\omega_{n+1}=\0$). 
Then, if $j< n$,  the simple system of $(\Phi\cap H)^+$ is $\left\{\al_{[k-1,k]}, \al_{[k, j+1]}\right\}\cup \Pi\smeno\left\{\al_{k-1}, \al_k,\al_{j+1}\right\}$, while the maximal roots are $\al_{[1, j]}$ and $\al_{[k, n]}$.
If $j= n$, the simple system  is $\left\{\al_{[k-1,k]}\right\}\cup \Pi\smeno\left\{\al_{k-1}, \al_k\right\}$, and $\Phi\cap H$ is irreducible.
By Lemma  \ref{lemma-simclosed},  we obtain that $I\cap H$ is $\sim$closed.
It remains to check that condition (2a) of Definition ~\ref{detachable} hold. 
Let $\ga\in (\be^\preq)$.
If $\ga\in H$, either $\ga$ and $\be$ are incomparable for the standard partial order,  or $\ga-\be\in \Phi^+$, while, if $\ga\not\in H$, we have  $\ga\geq \be$ and $(\ga, \be)=0$.
By Proposition \ref{minori-crossing}, we obtain that $\ga\sim \be$ if and only if $\ga\not\in H$, which is the claim.
\smallskip

\nl {\bf C.} $I\cong\mathrm C_{n, n}$. We define $(S_I, \preq)=(\al_{[j,n]}| j=n,n-1, \dots, 1)$. 
It is easy to see that $I\smeno S_I$ is the type $\mathrm C_{n-1, n-1}$ abelian nilradical generated by $\al_{n}+2\al_{n-1}$ in the root subsystem that has  $\{\al_{n}+2\al_{n-1}\}\cup \Pi\smeno\{\al_{n-1},\al_n\}$ as a simple system.
Hence, $\rk(I\smeno S_I)=n-1$ and, by Lemma \ref{lemma-simclosed},  $I\cap \gen (I\smeno S_I)$ is $\sim$closed.
\par
For all $\be\in S_I$, $\rk(\be^\preq)=n$ and $\be$ is detachable in  $(\be^\preq)$. 
Indeed, for $\be=\al_{[j,n]}$, $j\in[1,n]$, we take  $H=(2\check\omega_n-\check\omega_{n-1}-\check\omega_{j-1})^\perp$. 
Then, the simple system of $(\Phi\cap H)^+$ is $\left\{\al_{[j-1,n]}, \al_n+2\al_{n-1}\right\}\cup \Pi\smeno\left\{\al_n, \al_{n-1}, \al_{j-1}\right\}$ for $j<n$, and $\left\{\al_n+\al_{n-1}\right\}\cup \Pi\smeno\left\{\al_n, \al_{n-1}\right\}$ for $j=n$. 
For $j<n$, the maximal roots of $(\Phi\cap H)^+$ are $\al_{[j,n]}+\al_{[j,n-1]}$ and $\al_{[1,n]}$. 
For $j=n$, $\Phi\cap H$ is irreducible.
It follows that $I\cap H$ is $\sim$ closed, by Lemma~\ref{lemma-simclosed}.
If $\ga\in I$,  then $\ga=\al_{[h,n]}+\al_{[k, n-1]}$ for some $1\leq h\leq k\leq n$. 
Hence, $\ga\in H$ if and only if either $h\leq j-1$ and $k=n$, or $j\leq h\leq k\leq n-1$. 
In these cases, either $\ga$ and $\be$ are incomparable for the standard partial order,  or $\ga-\be\in \Phi^+$, and all $\ga'$ such that  $\ga<\ga'<\be$ are short roots. 
In any case, $\ga\nosim\be$ by Proposition \ref{minori-crossing}(3). 
If $\ga\in (\be^\preq)\smeno H$, we have  $\ga=\al_{[h,n]}+\al_{[k, n-1]}$ with $h\leq j-1\leq k\leq n-1$, hence $\be+\al_{[k,n-1]}\in\Phi$ and we obtain a crossing relation.
It follows that $H$ satisfies the conditions of Definition \ref{detachable}. 
\smallskip

\nl {\bf B} and  {\bf D1.} $I\cong \mathrm B_{n, 1}$, or $I\cong \mathrm D_{n, 1}$. 
We define $(S_I, \preq)=(\al_1, \theta)$.
It is easy to see that $I\smeno S_I$ is the type  $\mathrm B_{n-1, 1}$, or $\mathrm D_{n-1, 1}$, abelian nilradical generated by $\al_1+\al_2$ in the subsystem whose simple system is $\{\al_1+\al_2\}\cup \Pi\smeno\{\al_1,\al_2\}$.
Hence, $\rk(I\smeno S_I)=n-1$ and, by Lemma \ref{lemma-simclosed},  $I\cap \gen (I\smeno S_I)$ is $\sim$closed.
For all $\be\in S_I$, $\rk(\be^\preq)=n$ and $\be$ is detachable in  $(\be^\preq)$ by Lemma ~\ref{lem-detachable}.
\smallskip

\nl {\bf Dn.} $I\cong\mathrm D_{n, n}$. 
We define $(S_I, \preq)=(\wh\al_{[j,n]}| j=n,n-2, \dots, 1)$, where $\wh\al_{[j,n]}:=\al_n+\al_{[j, n-2]}$.
It is easy to see that $I\smeno S_I$ is the type $\mathrm D_{n-1,n-1}$ abelian nilradical generated by $\al_{[n-2,n]}$ in the root subsystem that has  $\{\al_{[n-2,n]}\}\cup \Pi\smeno\{\al_{n-1},\al_n\}$ as a simple system.
Hence, $\rk(I\smeno S_I)=n-1$ and, by Lemma \ref{lemma-simclosed},  $I\cap \gen (I\smeno S_I)$ is $\sim$closed.
\par
It remains to prove that all $\be\in S_I$, $\rk(\be^\preq)=n$ are  detachable in  $(\be^\preq)$. 
If $\be=\al_n$ or $\be=\al_{n}+\al_{n-2}$, then $\be$ is detachable in  $(\be^\preq)$ by  Lemma ~\ref{lem-detachable}. 
Otherwise, let $\be=\wh\al_{[j,n]}$, $j\in\{1,\dots,n-3\}$.
In this case we have a bipartition $(\be^\preq)=J_{\mathrm i}\cup J_{\mathrm f}$ with $J_{\mathrm f}=(\be^\leq) $ and $J_{\mathrm i}=(\be^\preq)\smeno (\theta_k^\leq) $. 
Indeed, we have: $(\be^\preq)=\{\ga\in I\mid c_{\al_j}(\ga)\geq 1 \text { or }  c_{\al_{n-1}}(\ga)\geq 1\}$, $J_{\mathrm f}\smeno J_{\mathrm i}=\{\ga\in I\mid c_{\al_j}(\ga)=2\}$, and $J_{\mathrm i}\smeno J_{\mathrm f}=\{\ga\in I\mid c_{\al_j}(\ga)=0 \text{ and } c_{\al_{n-1}}(\ga)=1\}$.
Hence, if we set $H=(\check\omega_n-\check\omega_j)^\perp$, $H$ strictly separates $J_{\mathrm i}\smeno J_{\mathrm f}$ from $J_{\mathrm f}\smeno J_{\mathrm i}$, and $J_{\mathrm i}\cap J_{\mathrm f}=I\cap H$. 
Moreover, for any $\ga_1\in J_{\mathrm i}\smeno J_{\mathrm f}$ and $\ga_2\in J_{\mathrm f}\smeno J_{\mathrm i}$ we have $c_{\al_{n-1}}(\ga_2-\ga_1)=0$ and $c_{\al_j}(\ga_2-\ga_1)=0$, hence $\ga_2-\ga_1\not\in \Phi$ and $\ga_1\lesssim \ga_2$ by Proposition ~\ref{minori-crossing}. 
By Lemma~\ref{lem-detachable}, $\be$ is detachable in  $J_{\mathrm f}$ and there exists a detaching hyperplane $H^{\mathrm f}$ for $\be$ in $J_{\mathrm f}$
such that $H^{\mathrm f}\cup (\be^\preq)$ is contained in $\red(\be)$.
The proof that $\be$ is also detachable in  $J_{\mathrm i}$ will be very similar to the proof of cases~${\mathrm A}_{n}$.
We take  $H^{\mathrm i} =(\check\omega_n-\check\omega_{n-1}-\check\omega_{j-1})^\perp$. 
Then, for each $\ga\in(\be^\prec)$, if $\ga\notin H^{\mathrm i}$, we have $\ga> \be$ and $(\ga, \be)=0$.
If $\ga\in H^{\mathrm i}$, either $\ga$ is incomparable with $\be$, or $\ga-\be\in \Phi$. 
Hence, by Proposition \ref{minori-crossing},  $\ga\sim\be$ if and only if $\ga\not\in H^{\mathrm i}$. 
The simple system for $(\Phi\cap H^{\mathrm i})^+$ is $\left\{\al_{[n-1,n]}, \wh\al_{[j-1,n]}\right\}\cup \Pi\smeno\left\{\al_{j-1}, \al_{n-1}, \al_n\right\}$,
for $j> 1$ and $\left\{\al_{[n-1,n]}\right\}\cup \Pi\smeno\left\{\al_{n-1}, \al_n\right\}$ for $j=1$.
For $j>1$, the maximal roots are $\wh\al_{[1,n]}$ and $\al_{[j,n]}+\al_{[j+1, n-2]}$, while,  for $j=1$, $\Phi\cap H$ is irreducible.
Hence $I\cap H^{\mathrm i}$ is $\sim$closed  by Lemma \ref{lemma-simclosed}.
\smallskip

\nl {\bf E6.} $I\cong \mathrm E_{6, 6}$. 
We choose  $(S_I,\preq)=\(\al_6, \theta, \al_{\{5,6\}}, \theta-\al_2, \al_{\{4,5,6\}}, \theta-\al_{\{2,4\}}, \al_{\{2,4,5,6\}},\right.$
$\left.\theta-\al_{\{2, 4, 5\}}\).$
Then $I\smeno S_I$ is the type $\mathrm A_{5,2}$ abelian nilradical generated by $\al_{[3,6]}$ in the root subsystem with simple system $\{\al_{[3,6]}\}\cup \Pi\smeno\{\al_3, \al_6\}$. 
Hence $I\cap\gen(I\smeno S_I)$ is $\sim$closed by Lemma \ref{lemma-simclosed}. 
The first six $\be$ in $(S_I,\preq)$ are detachable in  their $(\be^\preq)$ by Lemma ~\ref{lem-detachable}. 
Hence we have to consider only the last two roots. 
These are symmetric with respect to the order involution of $I$, hence it suffices to consider $\be=\al_{\{2,4,5,6\}}$.
By Proposition \ref{minori-crossing},  $\be\not\sim\ga$ for all $\ga\in (\be^\preq)$ except $\ga=\theta-\al_{\{2, 4, 5\}}$. 
Then, the hyperplane $H=(\check\omega_6-\check\omega_3)^\perp$ strictly separates $\be$ from $\theta-\al_{\{2, 4, 5\}}$ and contains all other roots in $(\be^\preq)$. 
The simple system of $(\Phi\cap H)^+$ is $\left\{\al_2, \dots, \al_5\right\}\cup\left\{\al_{[3,6]}\right\}$, and $(\Phi\cap H)$ is irreducible. Hence, we may apply Lemma \ref{lemma-simclosed} and conclude that $\be$ is detachable in  $(\be^\preq)$. 
\smallskip

\nl {\bf E7.} $I\cong \mathrm E_{7, 7}$. 
We choose  $(S_I, \preq)=
\left(\al_7, \theta, \al_{\{6, 7\}}, \theta-\al_1, 
 \al_{\{5, 6,7\}}, \theta-\al_{\{1,3\}}, \al_{\{4, 5, 6,7\}},\right.$ $\left.\theta-\al_{\{1,3,4\}},\al_{\{2, 4, 5, 6,7\}}, \theta-\al_{\{1,3,4,2\}}, \al_{\{3, 4, 5, 6,7\}}, \theta-\al_{\{1,3,4,5\}}, \al_{\{1, 3, 4, 5, 6,7\}}, \theta-\al_{\{1,3,4,5,6\}}\right)$.
We note that $(S_I,\preq)$ consists of all $\be$ in $I$ such that $c_{\al_i}(\be)\leq 1$ for $i=1,\dots, 7$, together with their symmetric roots, with respect to the order involution of $I$.
\par
If  $\be=\al_{\{2, 4, 5, 6,7\}}$ and $\be'=\theta-\al_{\{1,3,4,2\}}$ (as in Figure 7), then  $I\cap \gen(I\smeno S_I)=(I\smeno S_I)\cup \{\be, \be'\}$.
This is the type $\mathrm D_{6,6}$ abelian nilradical generated by $\be$ in the subsystem that has $\{\be\}\cup \Pi\smeno\{\al_2, \al_7\}$.
Hence $(I\smeno S_I)\cup \{\be, \be'\}$ is $\sim$closed by Lemma \ref{lemma-simclosed}.
\par
All roots in $S_I$, except $\al_{\{2, 4, 5, 6,7\}}$, $\al_{\{1, 3, 4, 5, 6,7\}}$, and their symmetric roots with respect to the order involution, are detachable in  their $\preq$-upper cone by Lemma ~\ref{lem-detachable}. 
\par
Let $\be=\al_{\{2, 4, 5, 6,7\}}$. 
Then $(\be^\preq)$ has the bipartition $J_{\mathrm i}\cup J_{\mathrm f}$ with $J_{\mathrm f}=(\be^\preq)\cap (\be^\leq) =\{\ga\in (\be^\preq)\mid c_{\al_2}(\ga)\geq 1\}$ and $J_{\mathrm i}=\{\ga\in (\be^\preq)\mid c_{\al_2}(\ga)\leq 1\}$. 
Indeed, it is easily seen that $H=(\check\omega_7-\check\omega_2)^\perp$ contains $J_{\mathrm i}\cap J_{\mathrm f}$ and strictly separates $J_{\mathrm i}\smeno J_{\mathrm f}$ from $J_{\mathrm f}\smeno J_{\mathrm i}$. 
Moreover, it is easy to see that,  for all $\ga_1\in J_{\mathrm i}\smeno J_{\mathrm f}$ and $\ga_2\in J_{\mathrm f}\smeno J_{\mathrm i}$, $\ga_2-\ga_1 \in 
L^+(\Phi)\smeno \Phi^+$, hence $\ga_1\lesssim \ga_2$. 
It remains to check that $\be$ is detachable in  $J_{\mathrm i}$ and $J_{\mathrm f}$ and the further conditions in Definition \ref{triang-order} (2b) hold.
For $J_{\mathrm f}$ this follows from Lemma~\ref{lem-detachable}. 
For $J_{\mathrm i}$, the hyperplane  $H^{\mathrm i}:=(\check\omega_7-\check\omega_3)$ is a detaching hyperplane that satisfies the required conditions. 
Indeed, the simple system of $(\Phi\cap H^{\mathrm i})$ is $\{\al_{\{3,4,5,6,7\}}\}\cup \Pi\smeno\{\al_7, \al_3\}$, hence $I\cap H^{\mathrm i}$ is $\sim$closed by Lemma \ref{lemma-simclosed}. 
Moreover, we can easily check that for  $\ga\in J_{\mathrm i} \cap H^{\mathrm i}$, either $\ga$ and $\be$ are incomparable, or $\ga\smeno \be\in \Phi^+$, while for all $\ga\in J_{\mathrm i} \smeno H^{\mathrm i}$, we have $\ga> \be$ and $\ga-\be\not\in \Phi$. 
Hence, we may conclude that  by Lemma \ref{minori-crossing}.
\par
Let $\be=\al_{\{1, 3, 4, 5, 6,7\}}$. 
In this case the hyperplane $\be$ is detachable in  $(\be^\preq)$, since $H:=(\check\omega_7-\check\omega_2)^\perp$ satisfies the conditions of Definition ~\ref{detachable}.
The details are similar to the previous case.
\par
Finally, for $\be=\theta-\al_{[1,4]}$ and $\theta-\al_{\{1,2,4,5,6\}}$ we have similar results by symmetry.
\end{proof}

\begin{figure}[hpt]
\caption{$I\cong \mathrm A_{9, 6}$. Here $\be=\al_{[6
, 7]}$. The gray boxes cover the roots in $H:=(\check\omega_6-\check\omega_5-\check\omega_8)^\perp$.}
$$
\begin{tikzpicture}[scale=1.2]
\foreach\x in{1,...,6} \draw(\x,6)--(\x,9); 
\foreach\y in{6,...,9} \draw(1,\y)--(6, \y);
\foreach\y in{6,...,9} \foreach\x in{1,...,6} 
\filldraw (\x,\y) circle(0.05cm);
\foreach\y in {6,...,9}
\draw (6,\y) circle(0.15cm);
\foreach\x in{1,...,6} \foreach\y in{7,...,9}
\node[left] at (\x+0.05, \y-.5) {$\ssty\y$};
\foreach\x in{1,...,5} \foreach\y in{6,...,9}
\node[above] at (\x+.5, \y-.05) {$\ssty\x$};
\node[right] at (6.2,7) {$\be$}; 
\filldraw[gray,opacity=0.2] (.7,7.3) rectangle (5.3, 5.7);
\filldraw[gray,opacity=0.2] (5.7,9.3) rectangle (6.3, 7.7);
\node[left] at (1,9) {$\theta$}; 
\node[right] at (6.2,6) {$\al_6$}; 
\end{tikzpicture}
$$
\end{figure}
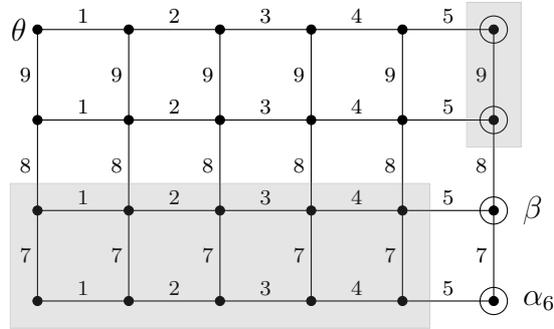

\begin{figure}[hpt]
\caption{$I\cong \mathrm C_{7, 7}$. Here $\be=\al_{[4, 7]}$. 
The gray boxes cover the roots in $H:=(2\check\omega_7-\check\omega_3-\check\omega_6)^\perp$.}
$$
\begin{tikzpicture}[scale=1.2]
\foreach\x in{1,...,7} \draw(\x,-1)--(\x,-\x); 
\foreach\y in{-1,...,-7} \draw(7, \y)--(-\y,\y);
\foreach\x in {1,...,7}
\foreach\y in {-\x, ..., -1}
\filldraw (\x,\y) circle(0.05cm);
\foreach\x in {1,...,6}
\foreach\y in {-1, ..., -\x}
\node[above] at (\x+.5,\y-.05) {$\ssty\x$};
\foreach\y in {1,...,6}
\foreach\x in {\y,...,6}
\node[right] at (\x+0.95,-\y-.5) {$\ssty\y$};
\foreach\y in {-1,..., -7}
\draw (7,\y) circle(0.15cm);
\filldraw[gray,opacity=0.2] (3.7,-3.7) rectangle (6.3, -6.3);
\filldraw[gray,opacity=0.2] (6.7,-.7) rectangle (7.3, -3.3);
\node[left] at (1,-1) {$\theta$}; 
\node[right] at (7.2,-7) {$\al_7$}; 
\node[right] at (7.2,-4) {$\be$}; 
\end{tikzpicture}
$$
\end{figure}
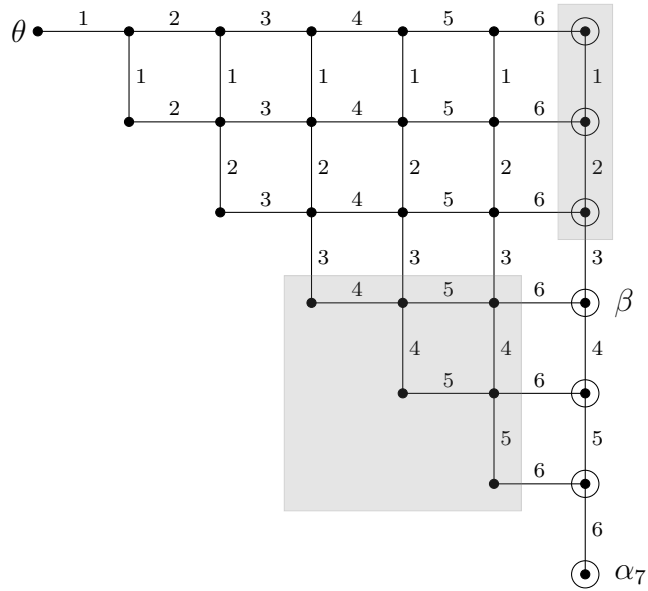

\begin{figure}[hpt]
\caption{$I\cong \mathrm B_{6, 1}$ and $I\cong \mathrm D_{6, 1}$. In both cases $S_I=\{\al_1, \theta\}$. 
The gray boxes cover the roots in $H=(\check\omega_1-\check\omega_2)^\perp$, for either  $\be=\al_1$ and $\be=\theta$.} 
$$
\begin{tikzpicture}[scale=1.2]
\draw(-5,0)--(5,0); 
\foreach\x in{-5,...,5} \filldraw (\x,0) circle(0.05cm);
\foreach\x in{2,...,6} 
\node[above] at (\x-6.5, -0.05) {$\ssty \x$};
\foreach\x in{2,...,6} 
\node[above] at (-\x+6.5, -0.05) {$\ssty \x$};
\node[right] at (5.2,0){$\al_1$};
\node[below] at (0,0){$\theta_s$};
\node[left] at (-5.2,0){$\theta$};
\foreach\x in{-5, 5}\draw (\x,0) circle(0.15cm);
\filldraw[gray, opacity=0.2] (-4.2, -.4) rectangle (4.2, .4);
\end{tikzpicture}
$$

$$
\begin{tikzpicture}[scale=1.2]
\draw(-4,0)--(0,0); 
\draw(-1,-1)--(3,-1); 
\draw(-1,0)--(-1,-1); 
\draw(0,0)--(0,-1); 
\foreach\x in{-4,...,0} \filldraw (\x,0) circle(0.05cm);
\foreach\x in{-1,...,3} \filldraw (\x,-1) circle(0.05cm);
 \filldraw (-1,-1) circle(0.05cm);
\foreach\x in {2,...,5}\node[above] at (\x-5.5, -0.05) {$\ssty \x$};
\foreach\x in {2,...,5}\node[above] at (-\x+4.5, -1.05) {$\ssty \x$};
\foreach\x in{-1,...,0} \node[right] at (\x-0.05, -0.5) {$\ssty 6$};
\node[right] at (3.2,-1){$\al_1$};
\node[left] at (-4.2,0){$\theta$};
\draw (-4,0) circle(0.15cm);
\draw (3,-1) circle(0.15cm);
\filldraw[gray, opacity=0.2] (-3.2, .4) rectangle (2.2, -1.4);
\end{tikzpicture}
$$
\end{figure}
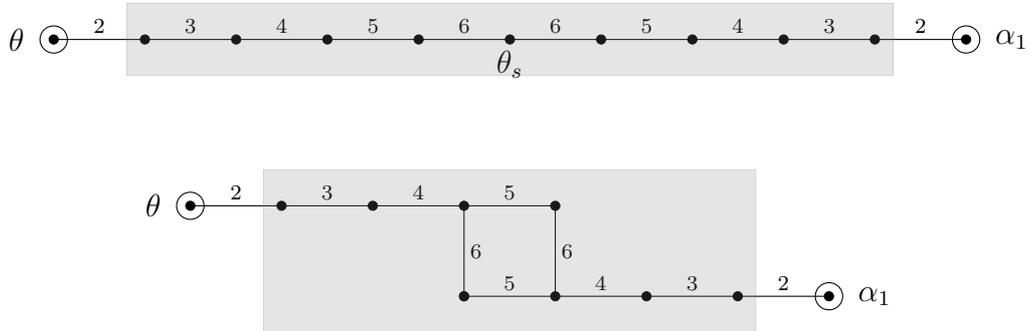

\begin{figure}[hpt]
\caption{$I\cong \mathrm D_{8,8}$. The figure represents the Hasse diagram of the whole $I$. The gray boxes illustrate the bipartition of $(\be^\preq)$ described in the proof, for $\be=\widehat\al_{4, 8}$. The next figure represents~$(\be^\preq)$.} 
$$
\begin{tikzpicture}[scale=1.2]
\foreach\x in{0,...,6}  \draw(\x, -\x)--(6, -\x); 
\foreach\x in{0,...,6}  \draw(\x, -\x)--(\x, 0); 
\foreach\x in{0,...,6} \foreach\y in{-\x, ..., 0} 
\filldraw (\x,\y) circle(0.05cm);
\foreach\y in {1, ...,6} \foreach\x in{\y,...,6} 
\node[left]at (\x+.1, -\y+0.5) {$\ssty \y$};
\foreach\x in {2, ...,7} \foreach\y in{-\x,...,-2} 
\node[above] at (\x- 1.5, \y+1.9) {$\ssty \x$};
\node[below] at (6,-6.1){$\al_8$};
\node[left] at (0,0){$\theta$};
\node[right] at (6.1,-3) {$\be$}; 
\foreach\y in{-6, ..., 0}\draw (6,\y) circle(0.15cm);
\filldraw[gray, opacity=0.2] (-.4,.3) rectangle (2.3, -2.2);
\node[left] at (-0.3,-1) {$J_{\mathrm f}\smeno J_{\mathrm i}$}; 
\filldraw[gray, opacity=0.2] (3.7,-3.7) rectangle (5.3, -5.2);
\node[left] at (3.7,-4.5) {$J_{\mathrm i}\smeno J_{\mathrm f}$}; 
\filldraw[gray,opacity=0.2] (2.7,.5) rectangle (6.6, -3.3);
\node[right] at (6.5,-1.5) {$J_{\mathrm i}\cap J_{\mathrm f}$}; 
\end{tikzpicture}
$$
\end{figure}
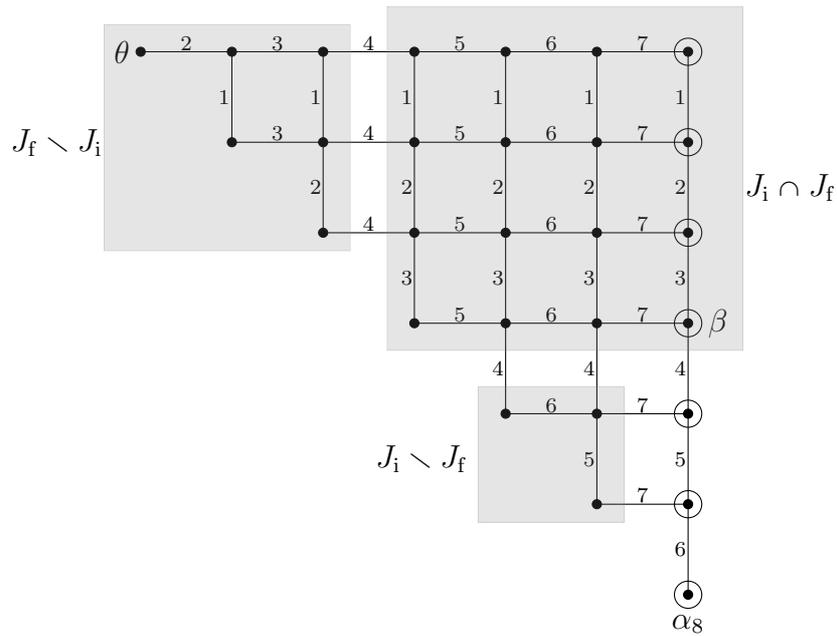

\begin{figure}[hpt]
\caption{$I\cong \mathrm D_{8,8}$. The diagram represents $(\be^\preq)$ for $\be=\widehat\al_{4, 8}$. The big rectangle contains the roots in $J_{\mathrm i}$
 and the gray parts cover the roots in $H^{\mathrm i} =(\check\omega_8-\check\omega_7-\check\omega_3)^\perp$.}
$$
\begin{tikzpicture}[scale=1.2]
\foreach\x in{0,...,3}  \draw(\x, -\x)--(6, -\x); 
\foreach\x in{4,...,5}  \draw(\x, -\x)--(5, -\x); 
\foreach\x in{0,...,5}  \draw(\x, -\x)--(\x, 0); 
\draw(6, -3)--(6, 0); 
\foreach\x in{0,...,5} \foreach\y in{-\x, ..., 0} 
\filldraw (\x,\y) circle(0.05cm);
\foreach\y in{-3, ..., 0}
\filldraw (6,\y) circle(0.05cm);
\foreach\y in {1, ...,5} \foreach\x in{\y,...,5} 
\node[left]at (\x+.1, -\y+0.5) {$\ssty \y$};
\foreach\y in {1, ...,3} 
\node[left]at (6.1, -\y+0.5) {$\ssty \y$};
\foreach\x in {2, ...,6} \foreach\y in{-\x,...,-2} 
\node[above] at (\x- 1.5, \y+1.9) {$\ssty \x$};
\foreach\y in{-3,...,0} 
\node[above] at (5.5, \y-.1) {$\ssty 7$};
\node[left] at (0,0){$\theta$};
\node[right] at (6.1,-3) {$\be$}; 
\foreach\y in{-3, ..., 0}\draw (6,\y) circle(0.15cm);
\draw[gray] (2.7,.4) rectangle (6.5, -5.3);
\filldraw[gray, opacity=0.2] (2.9,-2.7) rectangle (5.3, -5.1);
\filldraw[gray, opacity=0.2] (5.7,.3) rectangle (6.3, -2.3);
\node[left] at (2.5,-3) {$J_{\mathrm i}$ and $H^{\mathrm i}$}; 
\end{tikzpicture}
$$
\end{figure}

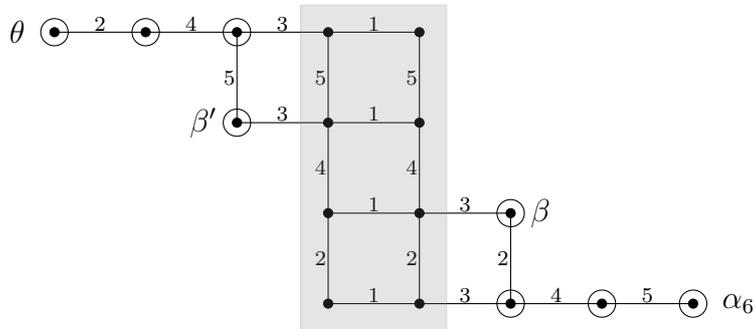
\begin{figure}[hpt]\caption{$I\cong \mathrm E_{6,6}$. 
The gray rectangle covers the roots in  $H=H_{\be'}=(\check\omega_6-\check\omega_3)^\perp$ for $\be=\al_{\{2,4,5,6\}}$ and $\be'=\theta-\al_{\{2,4,5\}}$. 
By definition $(\be^\preq)$ consists of these roots plus $\be$ and $\be'$, while $(\be'^\preq)$ consists of the roots in gray rectangle plus  $\be'$.}

$$
\begin{tikzpicture}[scale=1.2]
\draw (0,0)--(0,3);
\draw (0,1)--(2,1);
\draw (-1,2)--(1,2);
\draw (-3,3)--(1,3);
\draw (-1,2)--(-1,3);
\draw (0,0)--(4,0);
\draw (1,0)--(1,3);
\draw (2,0)--(2,1);
\foreach\x in {0, ..., 4}
\filldraw(\x,0) circle(0.05cm);
\foreach\x in {0, ..., 2}
\filldraw(\x,1) circle(0.05cm);
\foreach\x in {-1, ..., 1}
\filldraw(\x,2) circle(0.05cm);
\foreach\x in {-3, ..., 1}
\filldraw(\x,3) circle(0.05cm);
\node[above] at (-2.5,2.9)  {$\ssty 2$};
\node[above] at (-1.5,2.9)  {$\ssty 4$};
\foreach\y in {2,...,3}\node[above] at (-0.5,\y-.1) {$\ssty 3$};
\foreach\y in {0,...,3}\node[above] at (0.5,\y-.1) {$\ssty 1$};
\foreach\y in {0,...,1}\node[above] at (1.5,\y-.1) {$\ssty 3$};
\node[above] at (2.5,-.1)  {$\ssty 4$};
\node[above] at (3.5,-.1)  {$\ssty 5$};
\foreach\x in {-1,...,1}\node[left] at (\x+.1,2.5) {$\ssty 5$};
\foreach\x in {0,...,1}\node[left] at (\x+.1,1.5) {$\ssty 4$};
\foreach\x in {0,...,2}\node[left] at (\x+.1,0.5) {$\ssty 2$};
\node[left] at (-3.2,3)  {$\theta$};
\node[right] at (4.2,0)  {$\al_6$};
\foreach \x in {2,3,4}\draw (\x,0) circle(0.15cm); 
\foreach \x in {-3,-2,-1}\draw (\x,3) circle(0.15cm); 
\draw (-1,2) circle(0.15cm); 
\draw (2,1) circle(0.15cm); 
\filldraw[gray, opacity=0.2](-.3,3.3) rectangle (1.3,-.3);
\node[right] at (2.1, 1) {$\be$};
\node[left] at (-1.1, 2) {$\be'$};
\end{tikzpicture}
$$
\end{figure}

\begin{figure}[hpt]\caption
{$I\cong \mathrm E_{7, 7}$. The figure represents the full Hasse diagram of $I$. The gray rectangles illustrate the bipartition of $(\be^\preq)$ for $\be=\al_{\{2,4,5,6,7,\}}$. 
The bipartition of $(\be'^\preq)$, for the symmetric root $\be'=\theta-\al_{[1,4]}$,  is similar.} 
$$
\begin{tikzpicture}[scale=1.1]
\draw (-4,4)--(-4,-1);
\draw (-1,-4)--(4,-4);
\draw (-4,1)--(-3, 1)--(-3, -1);
\foreach\y in{0, -1}\draw (-4,\y)--(0,\y);
\foreach\x in{0, -1}\draw (\x,0)--(\x,-4);
\draw(-2, 0)--(-2, -2)--(0, -2);
\draw (-1,-3)--(1,-3)--(1, -4);
\node[left] at (-3.9,3.5)  {$\ssty 1$};
\node[left] at (-3.9, 2.5)  {$\ssty 3$};
\node[left] at (-3.9, 1.5)  {$\ssty 4$};
\foreach\x in {-4,-3}\node[left] at (\x+.1,0.5)  {$\ssty 5$};
\foreach\x in {-4,...,0}\node[left] at (\x+.1,-0.5)  {$\ssty 6$};
\foreach\x in {-2,..., 0}\node[left] at (\x+.1,-1.5)  {$\ssty 5$};
\foreach\x in {-1,0}\node[left] at (\x+.1,-2.5)  {$\ssty 4$};
\foreach\x in {-1,..., 1}\node[left] at (\x+.1,-3.5)  {$\ssty 2$};
\node[above] at (3.5,-4.1)  {$\ssty 6$};
\node[above] at (2.5,-4.1)  {$\ssty 5$};
\node[above] at (1.5,-4.1)  {$\ssty 4$};
\foreach\y in {-4,-3}\node[above] at (0.5,\y-.1)  {$\ssty 3$};
\foreach\y in {-4,...,0}\node[above] at (-0.5,\y-.1)  {$\ssty 1$};
\foreach\y in {-2,..., 0}\node[above] at (-1.5,\y-.1)  {$\ssty 3$};
\foreach\y in {-1,0}\node[above] at (-2.5,\y-.1)  {$\ssty 4$};
\foreach\y in {-1,..., 1}\node[above] at (-3.5,\y-.1)  {$\ssty 2$};
\foreach\y in {-1,...,4}
\filldraw(-4,\y) circle(0.05cm);
\foreach\x in {-1,...,4}
\filldraw(\x,-4) circle(0.05cm);
\foreach\x in {-1,...,1}
\filldraw(\x, -3) circle(0.05cm);
\foreach\y in {-1,...,1}
\filldraw(-3, \y) circle(0.05cm);
\foreach\x in {-2,...,0}\foreach\y in {-2,...,0}
\filldraw(\x, \y) circle(0.05cm);
\node[above] at (-4,4.2) {$\theta$}; 
\node[right] at (4.2,-4) {$\al_7$}; 
\foreach\y in {-1, ..., 4} 
\draw(-4, \y) circle (0.15cm); 
\foreach\x in {-1, ..., 4} 
\draw(\x, -4) circle (0.15cm); 
\draw(1, -3) circle (0.15cm); 
\draw(-3,1) circle (0.15cm); 
\node[right] at (1.1,-3){$\be$};
\node[above] at (-3,1.05){$\be'$};
\filldraw[gray, opacity=0.2] (-3.3, 1.6)rectangle(1.5, -3.3); 
\node[right]at(1.5,0){$J_{\mathrm i}\cap J_{\mathrm f}$};
\filldraw[gray, opacity=0.2] (-4.3, .3)rectangle(-3.7, -1.3); 
\node[left]at(-4.5,-.5){$J_{\mathrm f}\smeno J_{\mathrm i}$};
\filldraw[gray, opacity=0.2] (.3,-4.3)rectangle(-1.3,-3.7); 
\node[below]at(-.5,-4.5){$J_{\mathrm i}\smeno J_{\mathrm f}$};
\end{tikzpicture}
$$
\end{figure}
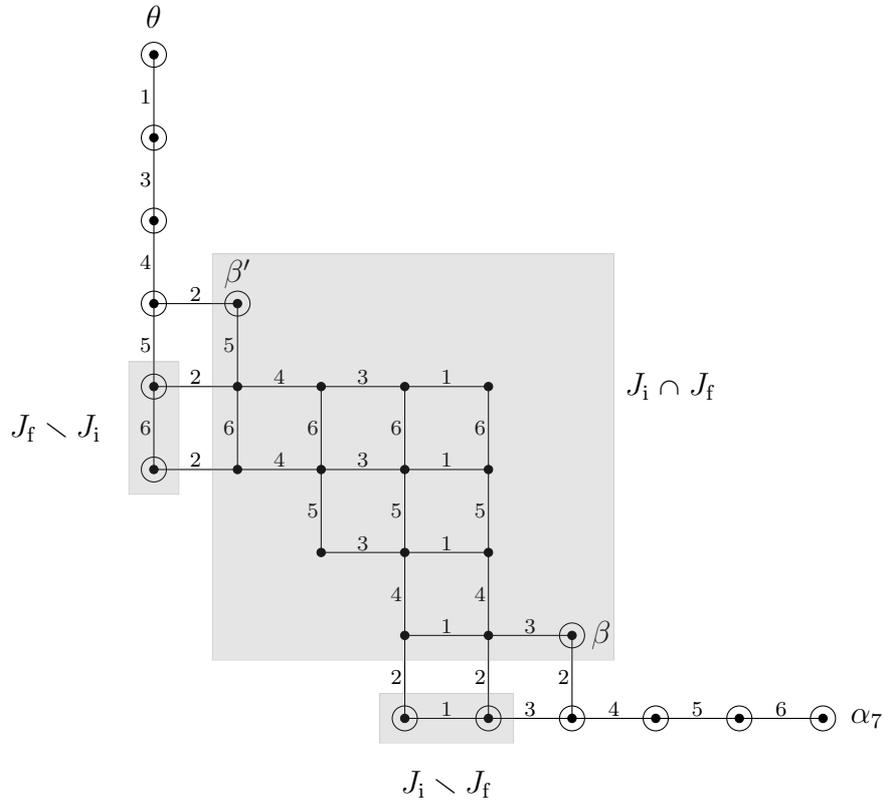

\begin{figure}[hpt]\caption
{$I\cong \mathrm E_{7, 7}$. The diagram represents  $(\be^\preq)$ for $\be=\al_{\{2,4,5,6,7,\}}$.  
The big rectangle contains the roots in $J_{\mathrm i}$ and gray part covers the roots in $H^{\mathrm i}=(\check\omega_7-\check\omega_3)^\perp$.} 

$$
\begin{tikzpicture}[scale=1.2]
\draw (-4,0)--(-4,-1);
\draw (-1,-4)--(0,-4);
\draw (-3, 1)--(-3, -1);
\foreach\y in{0, -1}\draw (-4,\y)--(0,\y);
\foreach\x in{0, -1}\draw (\x,0)--(\x,-4);
\draw(-2, 0)--(-2, -2)--(0, -2);
\draw (-1,-3)--(1,-3);
\foreach\x in {-3}\node[left] at (\x+.1,0.5)  {$\ssty 5$};
\foreach\x in {-4,...,0}\node[left] at (\x+.1,-0.5)  {$\ssty 6$};
\foreach\x in {-2,..., 0}\node[left] at (\x+.1,-1.5)  {$\ssty 5$};
\foreach\x in {-1,0}\node[left] at (\x+.1,-2.5)  {$\ssty 4$};
\foreach\x in {-1,...,0}\node[left] at (\x+.1,-3.5)  {$\ssty 2$};
\foreach\y in {-3}\node[above] at (0.5,\y-.1)  {$\ssty 3$};
\foreach\y in {-4,...,0}\node[above] at (-0.5,\y-.1)  {$\ssty 1$};
\foreach\y in {-2,..., 0}\node[above] at (-1.5,\y-.1)  {$\ssty 3$};
\foreach\y in {-1,0}\node[above] at (-2.5,\y-.1)  {$\ssty 4$};
\foreach\y in {-1,..., 0}\node[above] at (-3.5,\y-.1)  {$\ssty 2$};
\foreach\y in {-1,...,0}
\filldraw(-4,\y) circle(0.05cm);
\foreach\x in {-1,...,0}
\filldraw(\x,-4) circle(0.05cm);
\foreach\x in {-1,...,1}
\filldraw(\x, -3) circle(0.05cm);
\foreach\y in {-1,...,1}
\filldraw(-3, \y) circle(0.05cm);
\foreach\x in {-2,...,0}\foreach\y in {-2,...,0}
\filldraw(\x, \y) circle(0.05cm);
\foreach\y in {-1, ..., 0} 
\draw(-4, \y) circle (0.15cm); 
\foreach\x in {-1, ..., 0} 
\draw(\x, -4) circle (0.15cm); 
\draw(1, -3) circle (0.15cm); 
\draw(-3,1) circle (0.15cm); 
\node[right] at (1.1,-3){$\be$};
\draw (-3.3, 1.3)rectangle(1.5, -4.4); 
\node[below] at(1,1){$J_{\mathrm i}$};
\filldraw[gray, opacity=0.2] (-1.3,.3)rectangle(0.3,-4.3); 
\end{tikzpicture}
$$
\end{figure}

\begin{figure}[hpt]\caption
{$I\cong \mathrm E_{7, 7}$. The diagram represents  $(\be^\preq)$ for $\be=\al_{\{1,3,4,5,6,7,\}}$.  
The gray square covers the roots in $H=(\check\omega_7-\check\omega_2)^\perp$.} 

$$
\begin{tikzpicture}[scale=1.2]
\draw (-3, 0)--(-3, -1);
\draw (-3,0)--(0,0);
\draw (-4,-1)--(0,-1);
\draw (0,-3)--(0,0);
\draw (-1,-4)--(-1,0);
\draw(-2, 0)--(-2, -2)--(0, -2);
\draw (-1,-3)--(0,-3);
\foreach\x in {-3,...,0}\node[left] at (\x+.1,-0.5)  {$\ssty 6$};
\foreach\x in {-2,..., 0}\node[left] at (\x+.1,-1.5)  {$\ssty 5$};
\foreach\x in {-1,0}\node[left] at (\x+.1,-2.5)  {$\ssty 4$};
\foreach\x in {-1}\node[left] at (\x+.1,-3.5)  {$\ssty 2$};
\foreach\y in {-3,...,0}\node[above] at (-0.5,\y-.1)  {$\ssty 1$};
\foreach\y in {-2,..., 0}\node[above] at (-1.5,\y-.1)  {$\ssty 3$};
\foreach\y in {-1,0}\node[above] at (-2.5,\y-.1)  {$\ssty 4$};
\foreach\y in {-1}\node[above] at (-3.5,\y-.1)  {$\ssty 2$};
\filldraw(-4,-1) circle(0.05cm);
\filldraw(-1,-4) circle(0.05cm);
\foreach\x in {-1,...,0}
\filldraw(\x, -3) circle(0.05cm);
\foreach\y in {-1,...,0}
\filldraw(-3, \y) circle(0.05cm);
\foreach\x in {-2,...,0}\foreach\y in {-2,...,0}
\filldraw(\x, \y) circle(0.05cm);
\draw(-4, -1) circle (0.15cm); 
\draw(-1, -4) circle (0.15cm); 
\node[below] at (-1,-4.2){$\be$};
\filldraw[gray, opacity=0.2] (-3.2,.3)rectangle(0.3,-3.2); 
\end{tikzpicture}
$$
\end{figure}
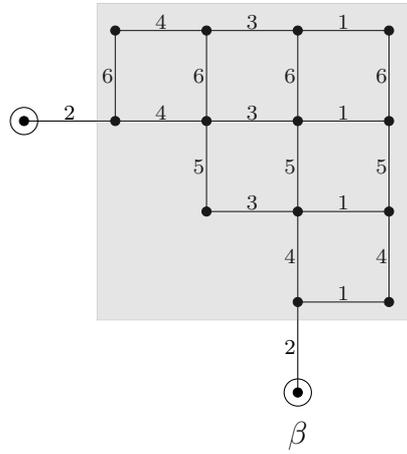

\section{Triangulations of standard parabolic facets}
In this section we prove Theorems \ref{main1} and \ref{main2}.

\par
Let $I$ be a face ideal of $\Phi^+$ and $F_I=\conv(I)$ the corresponding standard parabolic face. 
For all $J\subseteq I$, let
$$
\mc R_J=\{R\subseteq J\mid R \text{ reduced}\}.
$$
Then, let
$$
\mc T_I=\{\conv(R)\mid R\in \mc R_I, R \text{ maximal in } \mc R_I\}.
$$
We will prove that $\mc T_I$ is a triangulation of  $\mathrm F_I$. 
\par
By Propositions ~\ref{ab-nil-facet} and ~\ref{face-ab-nil}, it suffices to prove the 
claim when $I$ an abelian nilradical of $\Phi^+$.  
Henceforward, we make this assumption. 
\par

The proof is by induction on $\rk(\Phi)$ and is based on the existence of triangulation orders for all facet ideals. 
We start with two key lemmas. 

\par
For each $J\subseteq I$ let $\cone(J)$ be the positive cone generated by $J$, i.e. the set of linear combinations of elements in $J$ with nonnegative real coefficients.
Moreover, let $[J]$ be the {\it saturation} of $J$, i.e. 
$$[J]=\{x\in I\mid \exists\, y,z\in J\ y\leq x\leq z\}.$$ 

\begin{lem}\label{coni-bipartite}
Let $J$ be a saturated subset of $I$, and  $\{J_{\mathrm i},J_{\mathrm f}\}$ be a  bipartition of $J$.
Then $\cone(J)=\cone(J_{\mathrm i})\cup \cone(J_{\mathrm f})$. 
\end{lem}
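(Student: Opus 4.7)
The inclusion $\cone(J_{\mathrm i})\cup\cone(J_{\mathrm f})\subseteq\cone(J)$ is immediate since $J_{\mathrm i},J_{\mathrm f}\subseteq J$. For the reverse inclusion my plan is to show that $\cone(J_{\mathrm i})\cup\cone(J_{\mathrm f})$ is already a convex cone; since it contains the generators $J=J_{\mathrm i}\cup J_{\mathrm f}$ of $\cone(J)$, this will force the inclusion $\cone(J)\subseteq\cone(J_{\mathrm i})\cup\cone(J_{\mathrm f})$. Closure under nonnegative scaling is obvious, so it suffices to check closure under addition, and by bilinearity it is enough to prove that $\be_1+\be_2\in\cone(J_{\mathrm i})\cup\cone(J_{\mathrm f})$ for every $\be_1\in J_{\mathrm i}$ and $\be_2\in J_{\mathrm f}$.

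If either $\be_1$ or $\be_2$ belongs to $J_{\mathrm i}\cap J_{\mathrm f}$ the claim is immediate (both summands then lie in the same piece of the bipartition), so the only case to treat is $\be_1\in J_{\mathrm i}\smeno J_{\mathrm f}$ and $\be_2\in J_{\mathrm f}\smeno J_{\mathrm i}$. In this case condition (2) of Definition~\ref{bipartite} gives $\be_1\lesssim\be_2$, and hence a middle pair $\{\ga_1,\ga_2\}\subseteq\Phi^+$ with $\be_1+\be_2=\ga_1+\ga_2$ and $\be_1<\ga_i<\be_2$ for $i=1,2$. Since $J$ is saturated and $\ga_i\in[\be_1,\be_2]$, we have $\ga_1,\ga_2\in J$. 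Thus $\be_1+\be_2=\ga_1+\ga_2$ is the sum of two elements of $J$, and the task reduces to showing that we can choose a middle pair with both $\ga_i$ in a single piece of the bipartition (equivalently, both on the separating hyperplane $H$, which by condition (3) forces $\ga_1,\ga_2\in J_{\mathrm i}\cap J_{\mathrm f}$).

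This last point is the main obstacle. To carry it out I plan to iterate: let $f$ be a linear functional defining $H$, with $f<0$ on $J_{\mathrm i}\smeno J_{\mathrm f}$ and $f>0$ on $J_{\mathrm f}\smeno J_{\mathrm i}$. If both $f(\ga_1),f(\ga_2)$ have the same sign, or either vanishes, we are done (one of $\ga_1,\ga_2$ lies in $J_{\mathrm i}\cap J_{\mathrm f}$, so $\{\ga_1,\ga_2\}\subseteq J_{\mathrm i}$ or $\{\ga_1,\ga_2\}\subseteq J_{\mathrm f}$). Otherwise the pair $\{\ga_1,\ga_2\}$ itself crosses the bipartition, and we may reapply condition (2) to obtain a deeper middle pair $\{\de_1,\de_2\}$ with $\ga_i<\de_j<\ga_{3-i}$ nested inside $(\be_1,\be_2)$. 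Corollary~\ref{catene-crossing} shows that successive middle pairs form a chain of strictly nested intervals, so this process terminates after finitely many steps, yielding a middle pair whose elements lie on the same side of $H$ (or on $H$ itself). Then $\be_1+\be_2$ can be rewritten as a sum of two elements of $J_{\mathrm i}$ or of $J_{\mathrm f}$, completing the proof of convexity of the union.

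The delicate point is justifying the termination of the iteration and the alignment of the final middle pair with $H$; this is where both the saturation of $J$ (to keep every middle pair inside $J$) and the existence of the separating hyperplane from Definition~\ref{bipartite}(3) are simultaneously needed, and where Corollary~\ref{catene-crossing} does the real work by ensuring the poset of middle pairs is totally ordered away from at most one incomparable pair.
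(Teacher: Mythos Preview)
Your plan contains a genuine gap at the very first reduction step. You claim that to show $\cone(J_{\mathrm i})\cup\cone(J_{\mathrm f})$ is closed under addition, ``by bilinearity'' it suffices to check that $\be_1+\be_2$ lies in the union for all generators $\be_1\in J_{\mathrm i}$, $\be_2\in J_{\mathrm f}$. This implication is not valid: a union of two convex cones is not a linear object, and knowing that each pairwise sum of generators lands in one of the two pieces does not let you conclude the same for an arbitrary positive combination $x+y$ with $x=\sum c_a a\in\cone(J_{\mathrm i})$ and $y=\sum d_b b\in\cone(J_{\mathrm f})$. Concretely, after you replace one cross-pair $a^*+b^*$ by a middle pair $\ga_1+\ga_2$ lying, say, in $J_{\mathrm f}$, the rewritten expression may acquire \emph{new} generators in $J_{\mathrm f}\smeno J_{\mathrm i}$, so no obvious counting or weight invariant decreases; you have not explained how the process for a general $x+y$ terminates. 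Your nested-interval iteration (which is correct and does terminate, thanks to Corollary~\ref{catene-crossing}) only handles the two-root case.

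The paper avoids this difficulty by arguing differently: it inducts on $|J|$, working with a single element $x\in\cone(J)$ rather than with the additive structure of the union. Writing $x=\sum_{\be\in J}c_\be\be$ and choosing $\be_1\in\Min(J_{\mathrm i}\smeno J_{\mathrm f})$, $\be_2\in\Max(J_{\mathrm f}\smeno J_{\mathrm i})$, one crossing relation $\be_1+\be_2=\ga_1+\ga_2$ suffices to transfer the smaller of $c_{\be_1},c_{\be_2}$ to $\ga_1,\ga_2$, so that $x\in\cone(J\smeno\{\be_1\})$ or $x\in\cone(J\smeno\{\be_2\})$. Because $\be_1,\be_2$ are \emph{extremal} in $J$, the smaller set is again saturated, $\{J_{\mathrm i}\cap K,\,J_{\mathrm f}\cap K\}$ is still a bipartition of it, and the induction goes through. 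No iteration of middle pairs and no convexity of the union is needed. Your argument can likely be repaired along these lines, but as written the ``bilinearity'' step is the missing idea.
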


\begin{proof}
The claim is obvious if the bipartition is not proper, in particular if $|J|\leq 2$. 
The inclusion $\cone(J_{\mathrm i})\cup \cone(J_{\mathrm f})\subseteq \cone(J)$ is obvious in all cases.
We prove the reverse inclusion by induction on $|J|$. 
It is immediate that, for any $K\subseteq J$, $\{K\cap J_{\mathrm i}, K\cap J_{\mathrm f}\}$ is a bipartition of $K$.  
Therefore, it suffices to prove that if the bipartition $\{J_{\mathrm i},J_{\mathrm f}\}$ of $J$ is proper,  there exists a proper saturated subset $K$ of $J$ such that $x\in \cone(K)$. 
\par

So, let $J_{\mathrm i}, J_{\mathrm f}\subsetneqq J$, $x\in \cone(J)$, and $x=\sum\limits_{\be\in J}c_\be\be$,  with $c_\be$ nonnegative real coefficients, be a fixed expression of $x$.
If $\{\be\in J\mid c_\be> 0\}$ is included in $J_{\mathrm i}$ or $J_{\mathrm f}$ we are done.
Also if the saturation $[\{\be\in J\mid c_\be> 0\}]$ is properly included in $J$ we are done.
Hence, we assume  $J=[\{\be\in J\mid c_\be> 0\}]$. 
This means that, for all $\be\in \Min\, J\cup  \Max\, J$, $c_\be> 0$.
Since $J_{\mathrm i}\smeno J_{\mathrm f}$ and $J_{\mathrm f}\smeno J_{\mathrm i}$ are an initial and a final section of $J$, we have  $\Min(J_{\mathrm i}\smeno J_{\mathrm f})\subseteq \Min\, J$ and $\Max(J_{\mathrm f}\smeno J_{\mathrm i})\subseteq \Max\, J$.
We fix  $\be_1\in \Min (J_{\mathrm i}\smeno J_{\mathrm f})$ and $\be_2\in \Max(J_{\mathrm f}\smeno J_{\mathrm i})$.
By Definition~\ref{bipartite}, and since $J$ is saturated, there exist $\ga_1, \ga_2\in J$  such that $\be_1+\be_2=\ga_1+\ga_2$ and $\be_1< \{\ga_1, \ga_2\}< \be_2$. 
Hence, $c_{\be_1}\be_1+ c_{\be_2} \be_2=(c_{\be_1}-c_{\be_2}) \be_1 + c_{\be_2} (\ga_1+\ga_2) =$ $(c_{\be_2}-c_{\be_1}) \be_2 + c_{\be_1} (\ga_1+\ga_2)$.
We obtain that, if  $c_{\be_1}\geq  c_{\be_2}$, then $x\in \cone (J \smeno\{\be_2\})$, while, if  $c_{\be_2}\geq  c_{\be_1}$, then $x\in \cone (J \smeno\{\be_1\})$. 
Since $\be_1$ and $\be_2$ are extremal elements in $J$, $J \smeno\{\be_1\}$ and $J \smeno\{\be_2\}$ are saturated, hence the claim is proved.
\end{proof}

\begin{lem}\label{coni-detachable}
Let $J$ be a saturated subset of $I$, $\be\in J$,  $\be$ detachable in  $J$, and $J_\be=$ \hbox{$\{\be\}\cup  (\red(\be)\cap J)$}. 
Then $\cone(J)=\cone(J_\be)\cup \cone(J\smeno\{\be\})$. 
\end{lem}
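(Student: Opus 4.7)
The plan is to derive this lemma as an immediate corollary of Lemma~\ref{coni-bipartite}, using the bipartition already identified in Remark~\ref{rem-detachable}.

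First, since $\be$ is detachable in $J$, we may fix a detaching hyperplane $H$ and note that $J_\be=\{\be\}\cup(J\cap H)=\{\be\}\cup(\red(\be)\cap J)$ by condition~(2a) of Definition~\ref{detachable}. Remark~\ref{rem-detachable} then asserts that $\{J_\be,\,J\smeno\{\be\}\}$ is a bipartition of $J$: condition~(1) of Definition~\ref{bipartite} is clear since $\be\in J_\be$; condition~(3) is witnessed by $H$ itself, because $J_\be\cap(J\smeno\{\be\})=J\cap H$ and $H$ strictly separates $\be$ from the complementary part $J\smeno(\{\be\}\cup\red(\be))$; and condition~(2) follows from Proposition~\ref{minori-crossing}(1), since for $\be_1$ on the $\be$-side and $\be_2$ on the opposite side, these roots are comparable (as $\be$ is extremal and the cone-side determines the order) and their difference is not a root (otherwise they would lie on the same side of $H$), hence $\be_1\lesssim\be_2$.

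Second, the hypothesis that $J$ is saturated lets us invoke Lemma~\ref{coni-bipartite} directly, giving
\[
\cone(J)=\cone(J_\be)\cup\cone(J\smeno\{\be\}),
\]
which is exactly the claim. The role of saturation here is precisely the one it played in Lemma~\ref{coni-bipartite}: when we reduce an element on one side against an element on the other using the crossing relation $\be_1+\be_2=\ga_1+\ga_2$ provided by $\be_1\lesssim\be_2$, saturation of $J$ ensures $\ga_1,\ga_2\in J$ so the induction stays inside $\cone(J_\be)\cup\cone(J\smeno\{\be\})$.

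There is no real obstacle here: the combinatorial work has already been carried out in Remark~\ref{rem-detachable} and Lemma~\ref{coni-bipartite}. The only point deserving care is verifying that the initial/final section requirement in Definition~\ref{bipartite} is met with the correct orientation, which depends on whether $\be=\min J$ or $\be=\max J$; in either case one of $J_\be$, $J\smeno\{\be\}$ is the appropriate initial section and the other the final section, as $\be$ is extremal in $J$ by condition~(1) of Definition~\ref{detachable}.
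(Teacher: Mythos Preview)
Your approach is correct and matches the paper's own one-line proof: invoke Remark~\ref{rem-detachable} to obtain the bipartition $\{J_\be,\,J\smeno\{\be\}\}$ of $J$, then apply Lemma~\ref{coni-bipartite}. The extra verification of the bipartition conditions is more detail than needed (Remark~\ref{rem-detachable} already records this, and condition~(2) holds simply because $J_\be\smeno(J\smeno\{\be\})=\{\be\}$ and any $\ga\in (J\smeno\{\be\})\smeno J_\be$ satisfies $\be\sim\ga$ by definition of $\red(\be)$), but it does no harm.
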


\begin{proof}
By Remark ~\ref{rem-detachable}, the claim is a direct consequence of Proposition ~\ref{coni-bipartite}.
\end{proof}

\begin{pro}\label{cover}
For all $J\subseteq I$, if $J$ is saturated, then
$$
\cone(J)=\bigcup\,\{\cone(R)\mid R\subseteq J, \  R \text{ reduced}\}.
$$
\end{pro}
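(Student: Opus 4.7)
The plan is to induct on $\rk(\Phi)$, with a secondary induction on $|J|$, and to exploit the structure provided by a triangulation order on $I$. The inclusion $\supseteq$ is trivial, so the goal is $\cone(J) \subseteq \bigcup \cone(R)$. When $\Phi$ is reducible with components $\Phi_1, \dots, \Phi_k$, set $I_j = I \cap \Phi_j^+$ and $J_j = J \cap \Phi_j^+$; since crossing relations cannot straddle distinct components, reduced subsets of $J$ are exactly disjoint unions of reduced subsets of the $J_j$, whence $\cone(J) = \sum_j \cone(J_j)$ decomposes and the claim follows from the inductive hypothesis applied in each smaller-rank component. Henceforth I assume $\Phi$ irreducible and $I \neq \emptyset$; by Propositions~\ref{ab-nil-facet} and~\ref{triang-ord-exist}, $I$ admits a triangulation order $\preq$ with associated set $S_I$.

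For the secondary induction, the base $|J| \leq 1$ (and the case when $J$ is already reduced) is trivial. If $J \cap S_I = \emptyset$, then $J \subseteq I \cap \gen(I \smeno S_I) =: I'$, which by Definition~\ref{triang-order}(1) is saturated in $I$ and therefore $\sim$-closed (Lemma~\ref{saturated-simclosed}); by Lemma~\ref{induzione}, $I'$ is an abelian nilradical in a root subsystem of strictly smaller rank (Remark~\ref{ini-sec}), and the outer inductive hypothesis concludes this case, with the $\sim$-closedness of $I'$ ensuring that reducedness inside $I'$ and inside $I$ coincide.

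Otherwise set $\be := \min_\preq (J \cap S_I)$. By the construction in the proof of Proposition~\ref{triang-ord-exist}, each element of $S_I$ is $\leq$-extremal in its own $\preq$-upper cone, and since $S_I$ is a $\preq$-initial section the choice of $\be$ forces $J \subseteq (\be^\preq)$; hence $\be$ is extremal in $J$ as well. Definition~\ref{triang-order}(2) then provides either (a) a detaching hyperplane $H$ for $\be$ in $(\be^\preq)$, or (b) a bipartition of $(\be^\preq)$ with $\be$ detachable in each half via hyperplanes $H_{\mathrm i}, H_{\mathrm f}$. In case~(a), the inclusion $J \subseteq (\be^\preq)$ transfers the detachable conditions verbatim to $J$ with the same $H$, so Lemma~\ref{coni-detachable} yields $\cone(J) = \cone(J_\be) \cup \cone(J \smeno \{\be\})$. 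The second summand is saturated of smaller cardinality and is handled by the secondary induction; for the first, $J \cap H$ is saturated in the $\sim$-closed abelian nilradical $I \cap H$ of strictly smaller rank (Lemma~\ref{induzione}), the outer induction supplies reduced $R' \subseteq J \cap H$ with $\cone(J \cap H) = \bigcup \cone(R')$, and the inclusion $J \cap H \subseteq \red(\be)$ ensures that $\{\be\} \cup R'$ is reduced in $J$. Case~(b) is analogous: the induced bipartition $\tilde J_{\mathrm i} := J \cap J_{\mathrm i}^{\#}$, $\tilde J_{\mathrm f} := J \cap J_{\mathrm f}^{\#}$ of $J$ lets Lemma~\ref{coni-bipartite} split $\cone(J) = \cone(\tilde J_{\mathrm i}) \cup \cone(\tilde J_{\mathrm f})$, and case~(a) then applies to each half with $H_{\mathrm i}$ or $H_{\mathrm f}$. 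The main obstacle is exactly this transfer of the detachable/bipartite data from $(\be^\preq)$ down to the arbitrary saturated $J$: it is here that both the saturation hypothesis on $J$ and the $\sim$-closedness built into Definition~\ref{detachable} are indispensable.
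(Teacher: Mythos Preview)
Your proof is correct and follows essentially the same strategy as the paper: induction on $\rk(\Phi)$ via a triangulation order, with Lemmas~\ref{coni-bipartite} and~\ref{coni-detachable} doing the geometric work and $\sim$-closedness transporting reducedness back to $\Phi$. The only organizational differences are that you run a secondary induction on $|J|$ (whereas the paper fixes $x\in\cone(J)$ and picks $\be_0=\max_\preq\{\be\in J\mid x\in\cone(J\cap(\be^\preq))\}$, so that $x\notin\cone(J\smeno\{\be_0\})$ replaces your cardinality drop), and you treat the reducible subsystem case once at the outset rather than decomposing into irreducible components inline each time the argument passes to $\Phi\cap H$ or $\Phi\cap\gen(I\smeno S_I)$.
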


\begin{proof}
The claim is obvious if $\rk(\Phi)=1$.
We assume $\rk(\Phi)\geq 2$ and  the claim true for any abelian nilradical in any irreducible root system of rank strictly lower than $\rk(\Phi)$.
\par
Let $J\subseteq I$ be saturated.
The inclusion \lq\lq$\supseteq$\rq\rq\ is clear, so it suffices to prove the reverse one.
Let $x\in \cone(J)$,  $\preq$ be a triangulation order on $I$, 
$\be_0=\max_\preq\{\be\in J\mid x\in \cone(J\cap(\be^\preq))\}$, and
$J_0=J\cap ({\be_0}^\preq)$.  
Then,  $x\in \cone(J_0)$ and $J_0$ is saturated, being the intersection of two saturated sets, hence it suffice to prove the claim for $J_0$. 
We rename $J:=J_0$, so that $\be_0=\min_\preq J$,  and $x\not\in \cone (J\smeno\{\be_0\})$.
\par

\nl(a) First, we consider the case $\be_0\in I\smeno S_{I,\preq}$.
Let $\Psi=\Phi\cap  \gen(I\smeno S_{I,\preq})$, $\Psi_1,\dots, \Psi_k$ be the irreducible components of $\Psi$, $I_i=I\cap \Psi_i$, $J_i=J\cap \Psi_i$.
Let \hbox{$\{c_\be\mid \be\in J_0\}$}  be a fixed set of nonnegative real coefficients such that \hbox{$x=\sum\limits_{\be\in  J_0} c_\be \be$} and 
let $x_i=\sum\limits_{\be\in J_i} c_{\be}\be$, for $i=1,\dots,k$.
Then,  $I_i$ is an abelian nilradical of $\Psi_i^+$ and $J_i$ is saturated in it, hence, by the induction assumption, there exists a subset $R_i$ of $J_i$, reduced relatively to $\Psi_i$, such that $x_i\in \cone(R_i)$,  for $i=1,\dots,k$. 
By Definition \ref{triang-order}, $I\cap \Psi$  is $\sim$closed and hence, by Lemma \ref{simclosed-components},  $R_1 \cup\dots\cup R_k$ is reduced in $\Phi$. 
Clearly,  $x\in \cone(R_1 \cup\dots\cup R_k)$, hence we are done.

\par
\nl(b) Then, we assume $\rk({\be_0}^\preq)= n$.
By definition, either $\be_0$ is detachable in  $({\be_0}^\preq)$, or $({\be_0}^\preq)$ has a bipartition $\{B_{\mathrm i}, B_{\mathrm f}\}$ such that $\be_0$ is a detachable element in both of $B_{\mathrm i}$ and $B_{\mathrm f}$. 
In this case, $\{J\cap B_{\mathrm i}, J\cap B_{\mathrm f}\}$ is a bipartition of $J$ and, by Lemma ~\ref{coni-bipartite}, we may fix a $B\in\{B_{\mathrm i}, B_{\mathrm f}\}$ such that  $x\in \cone(J\cap B)$.
If $\be_0$ is detachable in  $({\be_0}^\preq)$, we set $B=({\be_0}^\preq)$.
In any case, we define $J'=J\cap B$.
Then, we still have $\be_0=\min_\preq J'$, $x\in \cone(J')$ and, in any expression of $x$ as a linear combination of elements of $J'$, the coefficient of $\be_0$ is strictly positive. 
\par
Since $\be_0$ is detachable in $B$, there exists a  detaching  hyperplane $H$ for $\be_0$ in $B$, and it is clear that such an $H$ is a detaching hyperplane also for $\be_0$ in $J'$. 
By Remark~\ref{rem-detachable}, the two subsets \hbox{$\{\be_0\}\cup (J'\cap H)$} and \hbox{$J'\smeno \{\be_0\}$} form a bipartition of $J'$ and,
by Lemma~\ref{coni-bipartite}, we obtain $x\in \cone (\{\be_0\}\cup (J'\cap H))$. 
Hence, there exists a positive real coefficient $c_0$ such that  $x-c_0\be_0 \in \cone(J'\cap H)$. 
Now, $J'\cap H$ is contained in the abelian nilradical $I\cap H$ of  $(\Phi\cap  H)^+$.
Let $\Psi_1, \dots, \Psi_k$ be the irreducible components of $\Phi\cap  H$.
Arguing as in case (a), we find  $R_1, \dots, R_k$ such that  $R_i\subseteq J'\cap \Psi_i$, $R_i$ is reduced relatively to $\Psi_i$, and  $x-c_0\be_0 \in \cone(R_1\cup\dots\cup  R_k)$.
As before, $R_1\cup\dots\cup  R_k$ is reduced in $\Phi$, since 
$I\cap H$ is $\sim$closed.
Moreover, by definition of $\be_0$, $R_1\cup\dots\cup  R_k\subseteq ({\be_0}^\preq)$, hence in   $({\be_0}^\preq)\cap H$. 
By Definition \ref{triang-order}, this subset is contained in $\red(\be_0)$, hence 
$\{\be_0\}\cup R_1\cup\dots\cup R_k$ is reduced. 
This proves the claim. 
\end{proof}

\begin{rem}\label{cone-conv}
We observe that for each $J\subseteq I$, $\cone(J)\cap F_I=\conv(J)$. 
Indeed, if $x=\sum\limits_{\be\in J} c_\be \be$, and $\al_I$ is the simple root of $\Phi^+$ such that $I=(\al_I^\leq)$,  then $\sum\limits_{\be\in J} c_\be=(x, \check \omega_{\al_I})$, which is $1$ for all $x$ in $F_I$. 
\end{rem}

\begin{cor}\label{cor-cover}
Let $I$ be a facet ideal of $\Phi^+$ and 
$$
\mc T'_I=\{\conv(R)\mid R \in \mc R_I, \ \rk(R)=n\}.
$$
Then $\mc T'_I$ is a covering of $F_I$.
\end{cor}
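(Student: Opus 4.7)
The plan is to deduce Corollary \ref{cor-cover} from Proposition \ref{cover} via a short dimension-and-closure argument. Since by Propositions \ref{ab-nil-facet} and \ref{face-ab-nil} we may assume $I$ is an abelian nilradical of $\Phi^+$, $I$ is in particular an ad-nilpotent ideal, and therefore a saturated subset of itself. Hence Proposition \ref{cover} applied to $J = I$, combined with Remark \ref{cone-conv}, gives
\[
F_I \;=\; \cone(I)\cap F_I \;=\; \bigcup_{R\in \mc R_I} \conv(R).
\]
My first step is to split this union as $F_I = A \cup B$, where $A = \bigcup\{\conv(R)\mid R\in \mc R_I,\ \rk(R)=n\}$ and $B = \bigcup\{\conv(R)\mid R\in \mc R_I,\ \rk(R)< n\}$. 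Since $\bigcup \mc T'_I = A$, it remains only to prove $F_I = A$.

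The key observation is a dimension count. Every $R\in \mc R_I$ lies on the affine hyperplane $H_{\al_I,m_{\al_I}} = \{x\in \mathrm E : (x,\check\omega_{\al_I}) = m_{\al_I}\}$, which does not pass through the origin. Hence affine independence and linear independence coincide on subsets of $I$, and $\dim \conv(R) = \rk(R) - 1$ for each $R\in \mc R_I$. Since $\dim F_I = n-1$ (as $F_I$ is a facet of the $n$-dimensional polytope $\pol$), every simplex contributing to $B$ has dimension strictly less than $\dim F_I$, and $B$ is contained in a finite union of proper affine subspaces of $H_{\al_I,m_{\al_I}}$.

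The remaining step is a standard closure argument. The set $A$ is closed, being a finite union of compact simplices, and $B$ is nowhere dense in $F_I$: indeed, any nonempty relatively open subset of $F_I$ is $(n-1)$-dimensional, while $B$ is covered by affine subspaces of lower dimension, so no such subset can be contained in $B$. Hence $F_I \smeno B$ is dense in $F_I$, and since $F_I \smeno B \subseteq A$ with $A$ closed, we obtain $F_I \subseteq A$. The reverse inclusion is immediate from $\conv(R)\subseteq F_I$ for every $R\subseteq I$. I foresee no real obstacle: once the identity $\dim\conv(R) = \rk(R) - 1$ is noted, the corollary is an elementary consequence of Proposition \ref{cover} and basic topology of convex polytopes.
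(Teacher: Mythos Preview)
Your proof is correct and follows essentially the same approach as the paper: apply Proposition~\ref{cover} with $J=I$ together with Remark~\ref{cone-conv} to get $F_I=\bigcup_{R\in\mc R_I}\conv(R)$, then pass to the subfamily of full-rank $R$. The paper compresses your dimension-and-closure argument into the single phrase ``by standard topological arguments''; you have simply made those arguments explicit.
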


\begin{proof}
By Proposition \ref{cover} and Remark \ref{cone-conv}, the set of all $\conv(R)$, with $R \subseteq I$ and $R$ reduced, is a covering of $F_I$. 
By standard topological arguments, we obtain that also $\mc T'_I$ is a covering  of $\mathrm F_I$.
\end{proof}

Our next step is to prove that the set $\mc T'_I$ defined in Corollary \ref{cor-cover} is a triangulation of the standard parabolic facet $\mathrm F_I$. 
For this, it remains  to prove that each   $T\in \mathcal T'_I$ is a simplex,  and that the intersection of any two $T_1, T_2\in \mathcal T_I$ is a common face of $T_1$ and $T_2$. 
This is proved in next two propositions.

\begin{pro}\label{reduced-independent}
Let $I$ be an abelian nilradical of $\Phi^+$ and $R$ be a reduced subset of $I$. Then $R$ is linearly independent. 
\end{pro}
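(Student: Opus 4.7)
My plan is to induct on $n=\rk(\Phi)$, using the triangulation orders provided by Proposition~\ref{triang-ord-exist} as the main tool for the inductive step. The base case ($n\leq 1$) is immediate since every abelian nilradical then has at most one element. For the step, I fix a triangulation order $\preq$ on $I$ and set $\be_0=\min_\preq R$.

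The argument splits according to whether $\be_0\in S_{I,\preq}$. If $\be_0\notin S_{I,\preq}$, then because $S_{I,\preq}$ is an initial section of $(I,\preq)$ the whole of $R$ lies in the final section $I\smeno S_{I,\preq}$. Setting $\Psi=\Phi\cap\gen(I\smeno S_{I,\preq})$, Remark~\ref{ini-sec}(1) gives $\rk(\Psi)<n$, Lemma~\ref{induzione} says $I\cap\Psi$ is an abelian nilradical of $\Psi^+$, and condition~(1) of Definition~\ref{triang-order} together with Lemma~\ref{saturated-simclosed} shows $I\cap\Psi$ is $\sim$closed. Lemma~\ref{simclosed-components} then guarantees that on each irreducible component $\Psi_i$ of $\Psi$ the intersection $R\cap\Psi_i$ is reduced in the smaller abelian nilradical $I\cap\Psi_i$; the induction hypothesis yields linear independence in each $\gen(\Psi_i)$, and linear independence of $R$ follows because the subspaces $\gen(\Psi_i)$ meet only at $\0$.

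If instead $\be_0\in S_{I,\preq}$, then $\rk((\be_0^\preq))=n$ and $(\be_0^\preq)$ is saturated. I plan to reduce condition~(2b) of the triangulation order to condition~(2a) first: in the bipartite case $\{J_{\mathrm i},J_{\mathrm f}\}$, every $\be_1\in J_{\mathrm i}\smeno J_{\mathrm f}$ and $\be_2\in J_{\mathrm f}\smeno J_{\mathrm i}$ satisfy $\be_1\lesssim\be_2$, so since $R$ is reduced the set $R\smeno\{\be_0\}$ must lie entirely in one of the two halves, say $J_{\mathrm i}$, in which $\be_0$ is detachable. Thus in all cases I obtain a saturated set $J$ with $R\subseteq J\subseteq(\be_0^\preq)$ and a detaching hyperplane $H$ for $\be_0$ in $J$. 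Reducedness of $R$ forces $R\smeno\{\be_0\}\subseteq\red(\be_0)$, so $R\smeno\{\be_0\}\subseteq\red(\be_0)\cap J=J\cap H\subseteq I\cap H$. By Lemma~\ref{induzione} $I\cap H$ is an abelian nilradical in $(\Phi\cap H)^+$, and by Definition~\ref{detachable} it is $\sim$closed, so by the same componentwise argument as in the first case the induction hypothesis (applied in rank $n-1$) gives that $R\smeno\{\be_0\}$ is linearly independent. Finally, since $H$ strictly separates $\be_0$ from $J\smeno(\{\be_0\}\cup\red(\be_0))$, we have $\be_0\notin H\supseteq\gen(R\smeno\{\be_0\})$, and linear independence of $R$ follows.

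The main obstacle I anticipate is the bipartition subcase: one must be sure that the $\lesssim$ relation forbidden between the two halves allows exactly the clean dichotomy ``$R\smeno\{\be_0\}\subseteq J_{\mathrm i}$ or $\subseteq J_{\mathrm f}$'', so that the bipartite situation collapses to the detachable one. The remaining nontrivial point is the bookkeeping required when $\Phi\cap H$ or $\Phi\cap\gen(I\smeno S_{I,\preq})$ turns out to be reducible, but this is handled uniformly by Lemma~\ref{simclosed-components}.
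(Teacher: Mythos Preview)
Your proposal is correct and follows essentially the same approach as the paper's own proof: induction on $\rk(\Phi)$, splitting on whether $\be_0=\min_\preq R$ lies in $S_{I,\preq}$, reducing the bipartite subcase to the detachable one via the reducedness of $R$, and then passing to the abelian nilradical $I\cap H$ (or $I\cap\gen(I\smeno S_{I,\preq})$) and its irreducible components. Your invocations of Lemma~\ref{simclosed-components} and the $\sim$closed condition for showing that $R\cap\Psi_i$ is reduced in $\Psi_i$ are actually unnecessary (reduced in $\Phi$ trivially implies reduced in any subsystem, since middle pairs in $\Psi_i^+$ are middle pairs in $\Phi^+$), but they are harmless.
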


\begin{proof}
We prove the claim by induction on $\rk(\Phi)$.
The case $\rk(\Phi)=1$ is obvious. We assume $\rk(\Phi)> 1$ and the claim true for irreducible root systems of rank lesser than $\rk(\Phi)$.
Let $\preq$ be a  triangulation order on $I$ and $\be=\min_\preq R$. 
\par
First we consider the case $\rk(\be^\preq)=n$ and $\be$ detachable in  $(\be^\preq)$. 
Let $H$ be a detaching hyperplane for $\be$ in  $(\be^\preq)$, $\Psi_1, \dots, \Psi_k$ be the irreducible components of  $\Phi\cap H$, and $R_i=(R\smeno\{\be\})\cap \Psi_i$, for $i=1, \dots, k$.
Then $R_i$ is contained in the abelian nilradical $I\cap \Psi_i$ of $\Psi_i^+$ and is reduced, relatively to $\Psi_i$.
By the induction assumption, it is linearly independent. 
Since $R\smeno\{\be\}=R_1\cup\dots\cup R_k$, we obtain that $R\smeno \{\be\}$, and hence $R$ are linearly independent.
\par
If $\rk(\be^\preq)=n$ and $\be$ is not detachable in  $(\be^\preq)$, there exists a bipartition  $\{J_{\mathrm i}, J_{\mathrm f}\}$ of  $(\be^\preq)$ such that $\be$ is a detachable element both in $J_{\mathrm i}$, and in $J_{\mathrm f}$.
By Definition ~\ref{bipartite}, either  $R\subseteq J_{\mathrm i}$, or $R\subseteq J_{\mathrm f}$, hence we can argue as in the previous case.
\par
If $\rk(\be^\preq)<n$, then $R$ is contained in the abelian nilradical $I\cap \gen(I\smeno S_{I, \preq})$, in $\Phi\cap \gen(I\smeno S_{I, \preq})$
and we may argue  by induction  as above. 
\end{proof}

\begin{pro}\label{intersection}
Let $I$ be an abelian nilradical of $\Phi^+$ and $R_1$, $R_2$ be reduced subsets in~$I$. 
Then, $\conv(R_1)\cap\conv(R_2)=\conv(R_1\cap R_2)$. 
In particular,  $\conv(R_1)\cap\conv(R_2)$ is a common face of  $\conv(R_1)$ and $\conv(R_2)$. 
\end{pro}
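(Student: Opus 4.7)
The inclusion $\conv(R_1\cap R_2)\subseteq\conv(R_1)\cap\conv(R_2)$ is immediate, and once the reverse inclusion is proved, the ``common face'' statement follows from Proposition~\ref{reduced-independent}: each $R_i$ being linearly independent, $\conv(R_1\cap R_2)$ is a face of both simplices $\conv(R_i)$. The strategy for the reverse inclusion is induction on $\rk(\Phi)$ (allowing reducible subsystems at intermediate steps, handled component-wise), using a triangulation order $\preq$ on $I$, which exists by Proposition~\ref{triang-ord-exist}. Given $x\in\conv(R_1)\cap\conv(R_2)$ with expressions $x=\sum_{R_1}a_\be\be=\sum_{R_2}b_\ga\ga$, replace each $R_i$ by the support of $x$ in $R_i$, so that all coefficients are positive; it then suffices to prove $R_1=R_2$, and this places both into $R_1\cap R_2$. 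Let $\be_0=\min_{\preq}(R_1\cup R_2)$, and, without loss of generality, $\be_0\in R_1$.

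If $\be_0\in I\smeno S_{I,\preq}$, then since $S_{I,\preq}$ is a $\preq$-initial section we have $R_1\cup R_2\subseteq I\cap\gen(I\smeno S_{I,\preq})$, which by Definition~\ref{triang-order} is $\sim$-closed and, via Lemma~\ref{induzione}, is an abelian nilradical of a strictly lower-rank subsystem; Lemma~\ref{simclosed-components} lets one decompose along irreducible components and apply the inductive hypothesis. Otherwise $\rk(\be_0^{\preq})=n$, and we first handle the case in which $\be_0$ is detachable in $(\be_0^{\preq})$ with detaching hyperplane $H$. Pick a normal $\nu$ to $H$ with $(\be_0,\nu)>0$. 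Reducedness of $R_1$ together with $\be_0\in R_1$ forces $R_1\smeno\{\be_0\}\subseteq\red(\be_0)\cap(\be_0^{\preq})=(\be_0^{\preq})\cap H$, so $(x,\nu)=a_{\be_0}(\be_0,\nu)>0$. If $\be_0\notin R_2$, each $\ga\in R_2\subseteq(\be_0^{\preq})$ lies either on $H$ or, by strict separation, on the opposite side from $\be_0$, giving $(x,\nu)\le 0$: a contradiction. Hence $\be_0\in R_2$; reducedness of $R_2$ then yields $R_2\smeno\{\be_0\}\subseteq H$, and evaluating $(\,\cdot\,,\nu)$ forces $a_{\be_0}=b_{\be_0}$. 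Subtracting $a_{\be_0}\be_0$ and renormalizing (the case $a_{\be_0}=1$ being trivial), we obtain a point of $\conv(R_1\smeno\{\be_0\})\cap\conv(R_2\smeno\{\be_0\})$ inside the abelian nilradical $I\cap H$ of $(\Phi\cap H)^+$ (Lemma~\ref{induzione}), whose ambient rank is $<n$; the inductive hypothesis gives $R_1\smeno\{\be_0\}=R_2\smeno\{\be_0\}$ and thus $R_1=R_2$.

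The bipartition subcase of Definition~\ref{triang-order}(2b) reduces to the detachable case just treated. Indeed, because $J_{\mathrm i}\smeno J_{\mathrm f}$ and $J_{\mathrm f}\smeno J_{\mathrm i}$ are pairwise $\sim$-related, reducedness forces each of $R_1$ and $R_2$ to lie entirely in $J_{\mathrm i}$ or entirely in $J_{\mathrm f}$; if they lie on different sides, the linear functional separating the two blocks, evaluated on both expressions of $x$, forces $R_1\cup R_2\subseteq J_{\mathrm i}\cap J_{\mathrm f}$. In every situation one then applies the detachable argument above inside $J_{\mathrm i}$ (or $J_{\mathrm f}$) with its detaching hyperplane $H_{J_{\mathrm i}}$, whose intersection with $(\be_0^{\preq})$ lies in $\red(\be_0)$ by the extra clause in Definition~\ref{triang-order}(2b). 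The main technical point to verify throughout is that reducedness of a subset of $I$ survives restriction to the smaller abelian nilradicals $I\cap H$ and $I\cap\gen(I\smeno S_{I,\preq})$ that arise in the induction, which is exactly the content of the $\sim$-closedness conditions built into Definitions~\ref{detachable} and~\ref{triang-order}.
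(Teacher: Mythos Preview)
Your proof is correct and follows essentially the same inductive scheme as the paper's: induction on $\rk(\Phi)$ via a triangulation order, with the detachable/bipartition case analysis. The paper works with cones throughout (using Remark~\ref{cone-conv}) and proves $\cone(R_1)\cap\cone(R_2)\subseteq\cone(R_1\cap R_2)$ directly, splitting the detachable case into the subcases $\be_0\in R_1\smeno R_2$ (weak separation by $H$) and $\be_0\in R_1\cap R_2$, whereas you first pass to supports and then force $\be_0\in R_2$ by a sign argument; the two organizations are equivalent.

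One small correction to your closing remark: the fact that reducedness of $R\subseteq I$ survives restriction to a subsystem $\Psi$ is automatic and does \emph{not} require $\sim$-closedness. Indeed, any middle pair witnessing $\be_1\sim_\Psi\be_2$ already lies in $\Phi^+$ and hence witnesses $\be_1\sim_\Phi\be_2$, so $\sim_\Psi\subseteq\sim_\Phi$ on $\Psi^+$ and reducedness in $\Phi$ implies reducedness in $\Psi$. The $\sim$-closedness hypotheses built into Definitions~\ref{detachable} and~\ref{triang-order} are needed for the \emph{opposite} implication, which is what Proposition~\ref{cover} uses to assemble reduced sets in subsystems into reduced sets in $\Phi$; they play no role in the present proposition.
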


\begin{proof}
By Proposition ~\ref{reduced-independent}, $\conv(R_i)$ is the simplex with set of vertexes $R_i$, for $i=1,2$, hence  $\conv(R_1\cap R_2)$ is common face of  $\conv(R_1)$ and $\conv(R_2)$. 
Hence it suffices to prove the first statement.
The inclusion $\hbox{$\conv(R_1)\cap\conv(R_2)$}\supseteq \hbox{$\conv (R_1\cap R_2)$}$ is clear. 
We prove the reverse one, by induction on $\rk(\Phi)$. 

\par
If $\cone(R_1)\cap\cone(R_2)\subseteq \cone(R_1\cap R_2)$ then, by Remark \ref{cone-conv}, the analogous relation for the convex hulls hold. 
So we work with cones.

\par
For $\rk(\Phi)=1$ the claim is obvious. 
Let  $\rk(\Phi)>1$, $\preq$ be a fixed triangulation order on $I$,  and $\be=\min_\preq(R_1\cup R_2)$. 

\par
\nl (a) If $\rk(\be^\preq)< n$, then $R_1, R_2\subseteq I\smeno S_{I, \preq}$. 
Let $\Psi_1, \dots, \Psi_k$ be the connected components of $\Phi\cap \gen(I\smeno S_{I, \preq})$. 
Let $R_{j,i}=R_j\cap \Psi_i$, for $j=1,2$ and $i=1, \dots, k$. 
Each $R_{j,i}$ is a reduced subset in the abelian nilradical $I\cap \Psi_i$ of $\Psi_i^+$, hence by the induction assumption $\cone(R_{1,i})\cap\cone(R_{2,i})\subseteq \cone(R_{1,i}\cap R_{2,i})$, 
for each $i$ in $\{1,\dots,k\}$. 
This implies directly the inclusion $\cone(R_1)\cap\cone(R_2)\subseteq \cone(R_1\cap R_2)$. 

\nl
(b) Next, let  $\rk(\be^\preq)=n$,  $\be$ be detachable in  $(\be^\preq)$, $H$ be a detaching hyperplane, and $\ov R_i=$ $R_i\cap H$ for $i=1,2$.
Then it  is clear that  $R_i$ is contained in one of the two closed half spaces determined by $H$ in $\mathrm E$, hence it is easily seen that $\cone(R_i)\cap H=\cone(\ov R_i)$. 
Moreover, if $\be\in R_i$, then $R_i\smeno\{\be\}=\ov R_i$.
Now we distinguish two possibilities.
\nl
(b1) If $\be=\min_\preq R_i\prec \min_\preq R_{i'}$, with $\{i,i'\}=\{1,2\}$, then  $R_1$ and $R_2$ are weakly separated by $H$. Hence,  $R_1\cap R_2=\ov R_1\cap \ov R_2$ and,  moreover, $\cone(R_1)\cap\cone(R_2)=\cone(R_1)\cap H \cap \cone(R_2)=\cone(\ov R_1)\cap\cone(\ov R_2)$. 
Arguing as in case (a), with $\Phi\cap H$ in place of  $\Phi\cap \gen(I\smeno S_{I, \preq})$ and $\ov R_i$ in place of $R_i$, by the induction assumption we obtain $\cone(\ov R_1)\cap\cone(\ov R_2)\subseteq\cone (\ov R_1\cap \ov R_2)$, 
and hence the claim.

\nl
(b2) If $\be=\min_\preq R_1=\min_\preq R_2$, then for all $x\in \cone(R_1)\cap\cone(R_2)$ there exist $c_i\in \real$ and $\ov x_i\in \cone(\ov R_i)$ ($i=1, 2$) such that  $x=c_1\be+\ov x_1=c_2\be +\ov x_2$.
Since $\ov x_1, \ov x_2\in H$ and $\be\not\in H$, we must have  $c_1=c_2$ and hence $\ov x_1=\ov x_2$. 
It follows  $\ov x_1\in \cone (\ov R_1\cap \ov R_2)$ and  hence $x\in \cone (R_1\cap R_2)$.

\nl
(c) Finally, let $\rk(\be^\preq)=n$, $\be$ not be detachable in  $(\be^\preq)$, and  $\{J_{\mathrm i}, J_{\mathrm f}\}$  be a bipartition of $(\be^\preq)$. 
By definition, each of $R_1$ and $R_2$ is contained in exactly one of $J_{\mathrm i}$ and $J_{\mathrm f}$. 
If both are contained in $J_{\mathrm i}$, or both in $J_{\mathrm f}$, we are reduced to case (b). 
Otherwise, we may assume $R_1\subseteq J_{\mathrm i}$, $R_2\subseteq J_{\mathrm f}$, $R_1\cap  (J_{\mathrm i}\smeno J_{\mathrm f})\neq \emptyset$, and $R_2\cap  (J_{\mathrm f}\smeno J_{\mathrm i})\neq \emptyset$. 
Let $H$ be a separating hyperplane for the bipartition $\{J_{\mathrm i}, J_{\mathrm f}\}$, and $\ov R_i=R_i\cap H$, for $i=1,2$. 
For a fixed $i$ in $\{1,2\}$, $\conv(R_i)$ is contained in one of the half-spaces determined by $H$ in $\mathrm E$, hence  $H\cap\cone(R_i)=\cone (\ov R_i)$. 
Moreover,  $\cone(R_1)$ and $\cone(R_2)$  belong to opposite half-spaces with respect to $H$, hence $R_1\cap R_2=\ov R_1\cap \ov R_2$ and $\cone(R_1)\cap \cone(R_2)=\cone (\ov R_1)\cap\cone( \ov R_2)$. 
Arguing by induction as in case  (b1), we obtain $\cone (\ov R_1)\cap\cone( \ov R_2)\subseteq \cone (\ov R_1\cap \ov R_2)$, and hence the claim.
\end{proof}

Corollary \ref{cover} and Propositions \ref{reduced-independent} and \ref{intersection} imply directly the following theorem, which is Theorem \ref{main1}.

\begin{thm}
\label{main1-6}
Let $I$ be a facet ideal in $\Phi$ and
$$
\mc T'_I=\{\conv(R) \mid R\in \mc R_I, \rk(R)=n\}.
$$
Then $\mc T'_I$ is a triangulation of the facet ideal $\mathrm F_I$. 
\end{thm}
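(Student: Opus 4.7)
The plan is to assemble the three ingredients that have just been established into the standard three-part characterization of a triangulation: the pieces cover $\mathrm F_I$, each piece is a simplex, and any two pieces meet along a common face. First, by Propositions~\ref{ab-nil-facet} and~\ref{face-ab-nil}, every facet ideal is an abelian nilradical of a suitable irreducible root subsystem, and since the definition of $\mc R_I$ and of $\mc T'_I$ only involves roots, linear combinations, and convex hulls, the whole statement reduces to the case in which $I$ is an abelian nilradical of $\Phi^+$. From that point on, all previously proved lemmas and propositions apply directly.

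Next I would verify that each $\conv(R)\in \mc T'_I$ is a geometric $(n-1)$-simplex whose vertices are exactly the roots in $R$. By Proposition~\ref{reduced-independent}, any $R\in\mc R_I$ is linearly independent in $\mathrm E$; combined with the condition $\rk(R)=n$, this forces $|R|=n$. But $I\subseteq H_{\al,m_\al}$, an affine hyperplane that does not pass through $\0$, so any linearly independent subset of $I$ is automatically affinely independent. Hence $\conv(R)$ is an $(n-1)$-dimensional simplex embedded in the affine span of $F_I$, with vertex set $R$.

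For the covering property, Corollary~\ref{cor-cover} already says exactly that $\bigcup_{T\in\mc T'_I}T=F_I$. For the face property, I would invoke Proposition~\ref{intersection}: given $R_1,R_2\in\mc R_I$ with $\rk(R_i)=n$, we have $\conv(R_1)\cap\conv(R_2)=\conv(R_1\cap R_2)$. Since $R_1\cap R_2\subseteq R_i$ and both $R_i$ are (by the previous paragraph) the vertex sets of simplices, $\conv(R_1\cap R_2)$ is indeed a common face of $\conv(R_1)$ and $\conv(R_2)$ (the empty face if $R_1\cap R_2=\emptyset$).

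Since all three conditions defining a triangulation are now in place, the proof concludes. There is no substantive obstacle left: the work has been done earlier, in Propositions~\ref{reduced-independent} and~\ref{intersection} (simplex and intersection properties), in Corollary~\ref{cor-cover} (covering), and ultimately in Proposition~\ref{triang-ord-exist}, whose case-by-case construction of triangulation orders powers the inductive arguments behind those statements. The only mildly delicate point worth making explicit in the write-up is the passage from linear independence to affine independence via the hyperplane $H_{\al,m_\al}$, which is what guarantees that an element of $\mc T'_I$ has the right dimension to fill $F_I$ together with its companions.
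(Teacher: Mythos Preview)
Your proposal is correct and follows essentially the same route as the paper: the theorem is obtained by combining the covering result (Corollary~\ref{cor-cover}), the linear independence of reduced sets (Proposition~\ref{reduced-independent}), and the intersection property (Proposition~\ref{intersection}). Your write-up simply makes explicit a couple of points the paper leaves implicit, notably the passage from linear to affine independence via the supporting hyperplane, and the reduction to the abelian nilradical case.
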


\begin{cor}
Each reduced subset in $I$ is a contained in a maximal reduced subset.
Moreover, each maximal reduced subset in $I$ has rank $n$, 
in particular is a linear basis of~$\mathrm E$. 
\end{cor}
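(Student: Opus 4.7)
The plan is to dispatch the existence assertion first and then focus on the rank assertion, which is the real content. Extending any reduced $R\subseteq I$ to a maximal reduced subset is immediate from the finiteness of $I$: greedily adjoin elements of $I\smeno R$ that preserve reducedness until no such element remains, which terminates in at most $|I|$ steps.

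For the rank statement, I would let $R$ be any maximal reduced subset of $I$. By Proposition \ref{reduced-independent}, $R$ is linearly independent, and since $\0\notin I$ it is also affinely independent, so $\conv(R)$ is a genuine simplex with vertex set $R$. Arguing by contradiction, suppose $\rk(R)<n$. Consider the centroid $x=\tfrac{1}{|R|}\sum_{\ga\in R}\ga$, which lies in the relative interior of $\conv(R)\subseteq F_I$. By Theorem \ref{main1-6}, $\mc T'_I$ covers $F_I$, so $x\in\conv(R')$ for some $R'\in\mc R_I$ with $\rk(R')=n$. Proposition \ref{intersection} then gives
$$
x\in\conv(R)\cap\conv(R')=\conv(R\cap R').
$$

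The crux of the argument is that $x$, being in the relative interior of the simplex $\conv(R)$, cannot lie on any proper face; hence $R\cap R'$ cannot be a proper subset of $R$, forcing $R\subseteq R'$. Since $|R'|=n>|R|$, the reduced set $R'$ properly contains $R$, contradicting maximality. Therefore $\rk(R)=n$. Finally, because $I$ is a facet ideal of the full-dimensional polytope $\pol$ and lies in the affine hyperplane $H_{\al_I,m_{\al_I}}$, which avoids $\0$, its linear span equals $\mathrm E$ and $n=\dim\mathrm E$; a linearly independent set of cardinality $\dim\mathrm E$ is then a basis of $\mathrm E$.

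I do not anticipate a significant obstacle here: Theorem \ref{main1-6} and Proposition \ref{intersection} together essentially dictate the argument, the only genuinely geometric ingredient being the standard fact that the centroid of a simplex lies strictly inside.
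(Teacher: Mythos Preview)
Your proof is correct and follows essentially the same route as the paper's: pick a point with all-positive coefficients in $\conv(R)$ (the paper uses a generic such point, you use the centroid), invoke the covering property to find a reduced $R'$ of rank $n$ whose convex hull contains it, apply Proposition~\ref{intersection} to get $x\in\conv(R\cap R')$, and conclude $R\subseteq R'$ from the fact that $x$ lies in the relative interior of $\conv(R)$. The only cosmetic differences are that the paper cites Corollary~\ref{cor-cover} rather than Theorem~\ref{main1-6} for the covering, and phrases the argument as a contrapositive rather than by contradiction.
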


\begin{proof}
Let $R_0$ be a reduced subset in $I$ such that $\rk (R_0)< n$. 
Let $x=\sum_{\be\in R_0} c_\be \be$ with $c_\be> 0$ for all $\be \in R_0$.
By Corollary ~\ref{cor-cover}, there exists a reduced subset $R$ in $I$ such that $\rk(R)=n$ and $x\in \conv(R)$. 
Then, by Proposition ~\ref{intersection}, $x\in \conv(R_0\cap R)$. 
It follows that $R_0\cap R=R_0$, hence $R_0$ is not maximal.
The rest of the claim follows from Proposition~\ref{reduced-independent}.
\end{proof}
 
We can finally prove the following result, which i is clearly equivalent to  Theorem \ref{main2}. 
The proof refers to the case by case analysis of Proposition \ref{triang-ord-exist}. 

\begin{thm}
\label{main2-6}
Let $I$ be a facet ideal in $\Phi$ and $R$ be a maximal reduced subset in $I$. 
Then $R$ is a $\ganz$-basis of the sub-lattice of $L(\Phi)$ generated by $(\Pi\smeno\{\al_I\})\cup \{m_{\al_I}\al_I\}$, where $\al_I$ is  the simple root such that $I=V_{\al_I}$.
\end{thm}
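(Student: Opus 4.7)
The proof goes by induction on $\rk\Phi$. By Proposition~\ref{face-ab-nil} we may assume that $I$ is an abelian nilradical of $\Phi^+$: if it is not one in $\Phi^+$, we replace $\Phi$ by the equal-rank subsystem $\wt\Phi_{\al_I}$, in which $I$ is an abelian nilradical and $L(\wt\Phi_{\al_I})$ plays the role of $L(\Phi)$. A direct computation shows that $L(\wt\Phi_{\al_I})$ coincides with the sub-lattice of $L(\Phi)$ generated by $(\Pi\smeno\{\al_I\})\cup\{m_{\al_I}\al_I\}$, so the claim is the same in both settings. In the reduced setting $m_{\al_I}=1$ and the target is $L(\Phi)$ itself. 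By the corollary preceding the theorem, $R$ is a linear basis of $\mathrm E$, so $\ganz R\subseteq L(\Phi)$ is a full-rank sublattice and the task is to show this inclusion is an equality.

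Fix a triangulation order $\preq$ on $I$ (Proposition~\ref{triang-ord-exist}) and set $\be=\min_\preq R$. Since $\rk R=n$, necessarily $\be\in S_{I,\preq}$. Definition~\ref{triang-order} furnishes a detaching hyperplane $H$ for $\be$, either directly in $(\be^\preq)$ or in one of the two halves $J$ of a bipartition of $(\be^\preq)$; in the latter case, Definition~\ref{bipartite}(2) together with the reducedness of $R$ forces $R\subseteq J$, reducing us to the detachable situation in $J$. Let $\Psi_1,\dots,\Psi_k$ be the irreducible components of $\Phi\cap H$, let $I_j=I\cap\Psi_j$ (an abelian nilradical of $\Psi_j^+$ by Lemma~\ref{induzione}), and set $R_j=(R\smeno\{\be\})\cap\Psi_j$. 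Combining the $\sim$closedness of $I\cap H$ built into Definition~\ref{detachable} with Lemma~\ref{simclosed-components}, I would show that the restriction of $\sim_\Phi$ to $I_j$ coincides with $\sim_{\Psi_j}$, and that any candidate $\beta'\in I_j\smeno R_j$ whose addition to $R_j$ preserves reducedness in $\Psi_j$ also preserves reducedness of $R$ in $\Phi$: crossings between distinct components are forbidden by Lemma~\ref{simclosed-components}, and $\beta'\sim\be$ is ruled out because $\beta'\in I\cap H\subseteq\red(\be)$ for the detaching hyperplanes appearing in the proof of Proposition~\ref{triang-ord-exist}. Hence each $R_j$ is a maximal reduced subset of $I_j$.

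By the inductive hypothesis, each nonempty $R_j$ is a $\ganz$-basis of $L(\Psi_j)$. Proposition~\ref{reduced-independent} gives that $R\smeno\{\be\}$ is an $\real$-basis of $H$, so $\rk(\Phi\cap H)=n-1$; combined with the orthogonality of distinct irreducible components this yields $H=\bigoplus_j\gen(\Psi_j)$, and therefore
$$
\ganz(R\smeno\{\be\})\;=\;\bigoplus_j\ganz R_j\;=\;\bigoplus_j L(\Psi_j)\;=\;L(\Phi\cap H).
$$
Writing $H=\nu^\perp$, we have $\Phi\cap H=\Phi\cap\nu^\perp$, the stabilizer subsystem of $\nu$, which is parabolic up to the action of $W$; hence $L(\Phi\cap H)=L(\Phi)\cap H$ and the quotient $L(\Phi)/(L(\Phi)\cap H)$ is infinite cyclic. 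To close the induction one needs the image of $\be$ to generate this cyclic quotient.

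This primitivity step, together with the transfer-of-maximality in the previous paragraph, is the main obstacle. For the hyperplanes produced by Lemma~\ref{lem-detachable} one has $\nu=\be^\vee-\check\omega_{\al_I}$, giving $(\be,\nu)=(\be,\be^\vee)-c_{\al_I}(\be)=2-m_{\al_I}=1$ in the reduced setting, and a short check shows $\gcd\{(\al,\nu):\al\in\Pi\}=1$, so $\be$ is primitive modulo $L(\Phi)\cap H$. For the ad~hoc detaching hyperplanes arising in the bipartition cases of Proposition~\ref{triang-ord-exist} (notably inside types $\mathrm D_n$, $\mathrm E_6$, and $\mathrm E_7$), the same verification must be extracted case by case from the explicit form of $\nu$ and the distinguished root $\be$; once it is in hand, $\ganz R=\ganz\be+(L(\Phi)\cap H)=L(\Phi)$ and the induction closes.
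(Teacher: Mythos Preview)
Your proof is correct and follows the same inductive scheme as the paper: reduce to an abelian nilradical, pick $\be=\min_\preq R$ for a triangulation order, pass to a detaching hyperplane $H$ (possibly after a bipartition), and apply induction to $R\smeno\{\be\}$ inside $I\cap H$.

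There are two points where you diverge from the paper, both harmless. First, your maximality-transfer step is more elaborate than needed. Once you know $\rk(R\smeno\{\be\})=n-1=\rk(\Phi\cap H)$, the decomposition into irreducible components forces $\rk R_j=\rk\Psi_j$ for every $j$; since $R_j$ is reduced in $\Psi_j$ (the easy implication $\sim_{\Psi_j}\Rightarrow\sim_\Phi$ suffices) and linearly independent by Proposition~\ref{reduced-independent}, it is automatically maximal. The $\sim$closedness machinery you invoke is correct but unnecessary here.

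Second, for the final step the paper argues concretely: for every $\al\in\Pi\smeno\{\al_I\}$ one checks (using the explicit form of $H$, either from Lemma~\ref{lem-detachable} or from the case analysis of Proposition~\ref{triang-ord-exist}) that $\al\in H$ or $\be\pm\al\in\Phi\cap H$, whence $\Pi\subseteq\ganz R$. You instead pass through the identification $L(\Phi\cap H)=L(\Phi)\cap H$, which holds because $\Phi\cap\nu^\perp$ is always $W$-conjugate to a standard parabolic subsystem, and then verify that $\be$ is primitive in the cyclic quotient $L(\Phi)/(L(\Phi)\cap H)$. This is a legitimate alternative; note that every hyperplane appearing in Proposition~\ref{triang-ord-exist} has $\nu$ a $\{0,\pm1\}$-combination of fundamental coweights with $(\be,\nu)=1$, so the primitivity check is in fact uniform rather than genuinely case-by-case. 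The paper's version is more elementary in that it avoids the parabolicity fact, while yours packages the residual verification more cleanly.
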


\begin{proof}
By Proposition \ref{face-ab-nil} and Remark \ref{rem-ab-nil}, 
it suffices to prove the claim in case $I$ is an abelian nilradical of $\Phi^+$, i.e. 
 $m_{\al_I}=1$. 
Under this assumption, we have to prove that $R$ is a $\ganz$-basis of~$L(\Phi)$.

\par
Let $\preq$ be a triangulation order of $I$ and $\be=\min_\preq R$. 
If $\be$ is detachable in  $(\be^\preq)$, let $J=(\be^\preq)\smeno\{\be\}$.
If $\be$ is not detachable in  $(\be^\preq)$, let $\{J_\mathrm i, J_\mathrm f\}$ be a bipartition of $(\be^\preq)$ such that $\be$ belongs to $J_\mathrm i$ and $J_\mathrm f$ and is detachable in  them.
In this case, $R$ is contained in exactly one of $J_\mathrm i$ and $J_\mathrm f$: we define $J=J_\mathrm i$ if $R\subseteq J_\mathrm i$, and $J=J_\mathrm f$ otherwise. 
In any case, there exists a hyperplane $H$ such that $\red(\be)\cap J=H\cap J$, hence $R\smeno\{\be\}$ is a reduced subset in the abelian nilradical $I\cap H$ of $(\Phi\cap H)^+$. 
Since $\rk(R\smeno\{\be\})=n-1$, also $\rk(I\cap H)=\rk(\Phi\cap H)=n-1$.
In particular $I\cap H$ has nontrivial intersection with each irreducible component of  $\Phi\cap H$. 
By Lemma \ref{induzione}, each of these intersections  is a nontrivial  abelian nilradical in its irreducible component, hence, by induction on the dimension, $R\smeno\{\be\}$ is a $\ganz$-basis of $L(\Phi\cap H)$.
\par
Now, we first consider the case in which $\be$ is long and is equal to $\min J$ or $\max J$ with respect to standard partial order.
In this case, as seen in the proof of Lemma~\ref{lem-detachable}, \hbox{$H=(\be^\vee-\check\omega_{\al_I})^\perp$}.
It follows directly that all simple roots different from $\al_I$ and perpendicular to $\be$ belong to $H$. 
For all other simple roots $\al\neq\al_I$, either $(\al, \be^\vee)=1$ and $\be-\al\in H$, or $(\al, \be^\vee)=-1$ and $\be+\al\in H$. 
Since the $\ganz$-span of $R\smeno\{\be\}$ contains all the roots in $H$, 
we obtain that the $\ganz$-span of $R$ contains $(\Pi\smeno\al_I)\cup \{\be\}$, and hence contains $\Pi$, as claimed.

\par
In the remaining cases, looking the proof of Proposition \ref{triang-ord-exist}, we can directly check that  for all $\al\in \Pi\smeno \{\al_I\}$, if $\be+\al\not\in\Phi$ then $\al\in H$ and, otherwise, $\be+\al\in \Phi\cap H$.
Arguing as in the previous case, we easily obtain that $R$ is a $\ganz$-basis of $L(\Phi)$.
\end{proof}

\section{Concluding remarks}

Via the action of the Weyl group, we may transport a triangulation of a standard parabolic facet to all facets in its orbit. 
Hence from the triangulations of all parabolic facets we obtain a triangulation  $\mc T$ of the whole boundary $\partial\pol$ of the root polytope $\pol$.
Clearly, such a $\mc T$ is not unique, since the way of transporting  a triangulation of a standard parabolic facet to the facets in its orbits is not unique; the possible ways  correspond to the systems of representatives of the left cosets of $W$ modulo the stabilizer of the standard parabolic facet.
For a fixed $\mc T$, for each $T\in  \mc T$, let $V_T$ be  the set of vertexes of $T$ and  $T_0=\conv(V_T\cup \{\0\})$. 
Then, clearly $\mc T_0:=\{T_0\mid T\in \mc T\}$ is a triangulation of $\pol$. 
Thus, the explicit enumeration of the maximal reduced subsets of  facet ideals, together with the above Theorem \ref{main2-6} and the results in \cite{CM1} would allow to compute the volume of $\pol$.
For the root types $\mathrm A$ and $\mathrm C$, this is done in \cite{CM2}. 
For the remaining types, it will be done in a next paper. 
In fact, the proof of Proposition \ref{triang-ord-exist} gives an explicit procedure for enumerating the reduced subsets.

\par
In  \cite{CM2}, with a suitable choice of the systems of representatives of the left cosets of $W$ modulo the stabilizers of the standard parabolic facets, we have obtained a triangulation of $\pol$  that restricts to a triangulation of the positive root polytope $\pol^+$, which by definition is $\conv(\Phi^+\cup \{\0\})$. 
In fact, this is a proof that, for the types  $\mathrm A$ and $\mathrm C$, the intersection of $\pol$ with the cone on $\Phi^+$ is equal to $\pol^+$.
This is one of the special properties of the root politope that hold only for the  types $\mathrm A$ and $\mathrm C$  (see also \cite{CM3}).
In fact, it is easy to see that, for all other root types, $\pol^+$ is properly contained  in $\pol\cap \cone(\Phi^+)$ \cite{Ch}.
Hence, in these cases, from the standard parabolic facets, we cannot obtain any triangulation of the positive root polytope.

\end{document}